\title{four-manifolds admitting hyperelliptic broken Lefschetz fibrations}
\author{Kenta Hayano}
\address{Department of Mathematics, Graduate School of Science, 
Osaka University, Toyonaka, Osaka 560-0043, Japan}
\email{k-hayano@cr.math.sci.osaka-u.ac.jp}
\author{Masatoshi Sato}
\address{Department of Mathematics, Graduate School of Science,
Osaka University, Toyonaka, Osaka 560-0043, Japan}
\email{m-sato@cr.math.sci.osaka-u.ac.jp}
\theoremstyle{plain}
\newtheorem{thm}{Theorem}[section]
\newtheorem{cor}[thm]{Corollary}
\newtheorem{lem}[thm]{Lemma}
\newtheorem{prop}[thm]{Proposition}
\theoremstyle{definition}
\newtheorem{defn}[thm]{Definition}
\newtheorem{rem}[thm]{Remark}
\def\Im{\operatorname{Im}}
\def\Ker{\operatorname{Ker}}
\def\Int{\operatorname{Int}}
\def\Diff{\operatorname{Diff}}
\def\res{\operatorname{res}}
\def\Re{\operatorname{Re}}
\def\Im{\operatorname{Im}}
\begin{document}

\maketitle

\begin{abstract}

We introduce hyperelliptic simplified (more generally, directed) broken Lefschetz fibrations, 
which is a generalization of hyperelliptic Lefschetz fibrations. 
We construct involutions on the total spaces of such fibrations of genus $g\geq 3$ 
and extend these involutions to the four-manifolds obtained by blowing up the total spaces. 
The extended involutions induce double branched coverings over blown up sphere bundles over the sphere. 
We also show that the regular fiber of such a fibration of genus $g\geq 3$ 
represents a non-trivial rational homology class of the total space. 

\end{abstract}

\section{Introduction}

A broken Lefschetz fibration is a smooth map from a four-manifold to a surface which has 
at most two types of singularities, called Lefschetz singularity and indefinite fold singularity. 
This fibration was introduced in \cite{ADK} as a fibration structure compatible with near-symplectic structures. 

A simplified broken Lefschetz fibration is a broken Lefschetz fibration over the sphere 
which satisfies several conditions on fibers and singularities. 
This fibration was first defined by Baykur \cite{Ba}. 
In spite of the strict conditions in the definition of this fibration, 
it is known that every closed oriented four-manifold admits a simplified broken Lefschetz fibration 
(this fact follows from the result of Williams \cite{Wil} together with a certain move of singularities defined by Lekili \cite{Lek}). 
For a simplified broken Lefschetz fibration, we can define a monodromy representation of this fibration 
as we define for a Lefschetz fibration. 
So we can define hyperelliptic simplified broken Lefschetz fibrations as a generalization of hyperelliptic Lefschetz fibrations. 
Hyperelliptic Lefschetz fibrations have been studied in many fields, algebraic geometry and topology for example, 
and it has been shown that the total spaces of such fibrations satisfy strong conditions on the signature, the Euler characteristic and so on (see e.g. \cite{Gurtas}). 
So it is natural to ask how far total spaces of hyperelliptic simplified broken Lefschetz fibrations are restricted 
or what conditions these spaces satisfy. 
The following result gives a partial answer of these questions: 

\begin{thm}\label{main1}

Let $f:M\rightarrow S^2$ be a genus-$g$ hyperelliptic simplified broken Lefschetz fibration. 
We assume that $g$ is greater than or equal to $3$.

\begin{enumerate}[(i)]

\item Let $s$ be the number of Lefschetz singularities of $f$ whose vanishing cycles are separating. 
Then there exists an involution 
\[
\omega: M\rightarrow M
\]
such that the fixed point set of $\omega$ is the union of  (possibly nonorientable) surfaces and $s$ isolated points. 
Moreover, $\omega$ can be extended to an involution 
\[
\overline{\omega}:M\sharp s\overline{\mathbb{CP}^2}\rightarrow M\sharp s\overline{\mathbb{CP}^2}
\]
such that $M\sharp s\overline{\mathbb{CP}^2}/\overline{\omega}$ is diffeomorphic to $S\sharp 2s\overline{\mathbb{CP}^2}$, 
where $S$ is $S^2$-bundle over $S^2$, 
and the quotient map 
\[
/\overline{\omega}:M\sharp s\overline{\mathbb{CP}^2}\rightarrow M\sharp s\overline{\mathbb{CP}^2}/\overline{\omega}\cong S\sharp 2s\overline{\mathbb{CP}^2}
\]
is the double branched covering. 

\item Let $F\in M$ be a regular fiber of $f$. 
Then $F$ represents a non-trivial rational homology class of $M$, that is, $[F]\neq 0$ in $H_2(M;\mathbb{Q})$. 

\end{enumerate}

\end{thm}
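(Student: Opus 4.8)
The plan is to derive part (ii) as a quick consequence of part (i), so the bulk of the work lies in constructing the involution $\omega$, analyzing its fixed locus, and identifying the quotient. The guiding principle is the classical picture for hyperelliptic Lefschetz fibrations: on each regular fiber $F\cong\Sigma_g$ we use the hyperelliptic involution $\iota$, and the hyperelliptic hypothesis---namely that the monodromy of $f$ lies in the centralizer of $\iota$ in the mapping class group---guarantees that these fiberwise involutions are compatible with the gluing and therefore assemble into a well-defined involution over the regular part of the base.

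First I would construct $\omega$ over the complement of the singular values. Over a regular value $p$, set $\omega|_{f^{-1}(p)}=\iota$; since each monodromy map commutes with $\iota$ up to isotopy, a standard isotopy-smoothing argument lets me patch these together into a smooth fiber-preserving involution on $f^{-1}(S^2\setminus\Delta)$, where $\Delta$ is the set of critical values. Next I would extend $\omega$ across the singularities using local models. Near a Lefschetz singularity whose vanishing cycle is non-separating, the cycle can be taken $\iota$-invariant, and $\iota$ extends over the nodal fiber with a one-dimensional fixed locus, contributing to the branch surfaces. Near a Lefschetz singularity with a separating vanishing cycle, the local model $(z,w)\mapsto(-z,-w)$ on $\mathbb{C}^2$ shows that $\iota$ extends but now fixes only the singular point itself; these are exactly the $s$ isolated fixed points in the statement. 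The delicate case is the round (indefinite fold) locus, where the fiber genus drops from $g$ to $g-1$: here I must check that the genus-$g$ hyperelliptic involution matches the genus-$(g-1)$ one across the fold. This is where the hypothesis $g\geq3$ is essential---it guarantees $g-1\geq2$, so the lower fiber carries a bona fide hyperelliptic involution, and the fold's vanishing cycle can be arranged to be $\iota$-invariant so that the two involutions agree on the shared boundary.

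With $\omega$ in hand, the fixed-point set consists of the $s$ isolated points together with (possibly nonorientable) surfaces traced out by the $2g+2$ Weierstrass points of $\iota$ as the fiber varies; nonorientability arises precisely when the monodromy permutes the Weierstrass points. To remove the isolated fixed points I blow up $M$ at each of them, obtaining $M\sharp s\overline{\mathbb{CP}^2}$. In the local model $(z,w)\mapsto(-z,-w)$, the lifted involution on the blow-up acts trivially on the exceptional $\mathbb{CP}^1$, so $\overline{\omega}$ has a purely two-dimensional fixed locus and the quotient is smooth. Over the regular part the quotient $f^{-1}(S^2\setminus\Delta)/\omega$ fibers over the base with fiber $\Sigma_g/\iota\cong S^2$, and assembling the pieces identifies $M/\omega$ with an $S^2$-bundle $S$ over $S^2$; a local computation of the double branched cover near each blown-up point shows that each contributes two exceptional spheres downstairs, so the quotient is $S\sharp2s\overline{\mathbb{CP}^2}$ and $\pi:=/\overline{\omega}$ is the associated double branched covering. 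I expect the round-locus extension and the precise bookkeeping that yields exactly $2s$ copies of $\overline{\mathbb{CP}^2}$ to be the main obstacles.

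Finally, part (ii) follows from the structure produced in (i). A regular fiber $F$ is unaffected by the blow-ups, so its class is carried injectively into $H_2(M\sharp s\overline{\mathbb{CP}^2};\mathbb{Q})$, and it suffices to show $[\tilde F]\neq0$ there, where $\tilde F=\pi^{-1}(\sigma)$ for a sphere fiber $\sigma$ of the $S^2$-bundle $S\subset N:=S\sharp2s\overline{\mathbb{CP}^2}$. Since $\pi|_{\tilde F}\colon\tilde F\to\sigma$ is the degree-two branched cover realizing $\Sigma_g$ as the double of $\Sigma_g/\iota\cong S^2$ over $2g+2$ points, the pushforward satisfies $\pi_*[\tilde F]=2[\sigma]$. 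The fiber class of the sphere bundle is nonzero in $H_2(S;\mathbb{Q})$ and survives the blow-ups, so $2[\sigma]\neq0$ in $H_2(N;\mathbb{Q})$; hence $[\tilde F]\neq0$, and therefore $[F]\neq0$ in $H_2(M;\mathbb{Q})$.
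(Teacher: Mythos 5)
Your outline for part (i) follows the same route as the paper: build the involution piecewise from the fiberwise hyperelliptic involution using the handle decomposition (trivial piece, Lefschetz $2$-handles with the two local models according to whether the vanishing cycle is separating, round handle, lower side), blow up at the $s$ isolated fixed points, and identify the quotient handle by handle with a blown-up sphere bundle. The one step you dismiss as ``a standard isotopy-smoothing argument'' is in fact the technical heart of the paper. To attach the round $2$-handle equivariantly you need the total monodromy of the higher side to be realized by a \emph{single} diffeomorphism that simultaneously commutes with $\iota$ and preserves the fold vanishing cycle $c$. Hyperellipticity gives a representative in $C(\iota)$ (Birman--Hilden), and Baykur's lemma gives a representative preserving $c$, but these are a priori different representatives; reconciling them is exactly Lemma \ref{lemma:isomH_g(c)} ($\mathcal{H}_g^s(c)\cong\mathcal{H}_g(c)$), which the paper proves in Section \ref{section:preserve_c} by an $\iota$-equivariant bigon-collapsing argument (using that $c\cap\Sigma_g^{\iota}$ is preserved, etc.). Without this your fiberwise involutions on the higher side and on the round cobordism need not match on the shared boundary. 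Relatedly, your explanation of where $g\geq 3$ enters is not quite right: $\Sigma_{g-1}$ carries a hyperelliptic involution for any $g\geq 2$; what is actually used is Birman--Hilden injectivity for $\Sigma_{g-1}$ (hence $g-1\geq 2$) to symmetrically trivialize the lower-side gluing, together with the fact that for $g\leq 2$ the total space is not even determined by the vanishing cycles.

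For part (ii) you take a genuinely different and shorter route than the paper: push forward $[F]$ under the branched covering $\pi$ to get $2[\pi(F)]$ and observe that $\pi(F)$ is a fiber of the sphere bundle, hence nonzero in $H_2(S\sharp 2s\overline{\mathbb{CP}^2};\mathbb{Q})$. This works provided you verify that the diffeomorphism produced in (i) really carries $\pi(F)$ to (a class homologous to) a sphere fiber --- plausible, since all the handle manipulations in the identification take place away from a generic fiber, but it should be said. The paper instead argues intrinsically: it completes the $2$-dimensional part of the fixed locus to a relative cycle $\mathcal{S}$ in $(M_h\cup M_r,\partial(M_h\cup M_r))$ with $\mathcal{S}\cdot[F_h]=2g+2$, and then uses a Mayer--Vietoris argument together with the action of $\omega_*$ on $H_2(S^1\times\Sigma_{g-1};\mathbb{Q})$ to pass from $M_h\cup M_r$ to $M$. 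Your version buys brevity at the cost of leaning on the precise statement of (i); the paper's version is longer but self-contained and does not require tracking the fiber through the diffeomorphism.
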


\begin{rem}

Theorem \ref{main1} can be generalized to directed broken Lefschetz fibrations, which are broken Lefschetz fibrations over the sphere satisfying certain conditions on singularities (cf. Theorem \ref{main1-directed}). 

\end{rem}

The statement (i) in Theorem \ref{main1} is a generalization of the results of Fuller \cite{Fuller} and Siebert-Tian \cite{ST} on hyperelliptic Lefschetz fibrations. 
Indeed, they proved independently that, after blowing up $s$ times, the total space of a hyperelliptic Lefschetz fibration (with arbitrary genus) 
is a double branched covering of a manifold obtained by blowing up a sphere bundle over the sphere $2s$ times, 
where $s$ is the number of Lefschetz singularities of the fibration whose vanishing cycles are separating.  
Fuller used handle decompositions and Kirby diagrams to prove the above statement, 
while Siebert and Tian used complex geometrical techniques. 
We also use handle decompositions to prove the statement {\it (i)} in Theorem \ref{main1}, 
but our method is slight different from the one Fuller used; 
ours can give an involution of the total space of a fibration explicitly 
and this explicit description is used in the proof of the statement (ii) in Theorem \ref{main1}. 

\par

Auroux, Donaldson and Katzarkov \cite{ADK} showed that a closed oriented four-manifold $M$ admits a near-symplectic form if and only if 
$M$ admits a broken Lefschetz pencil (or fibration) $f$ which has a cohomology class $h\in H^2(M)$ 
such that $h(\Sigma)>0$ for every connected component of every fiber of $f$. 
Moreover, for a broken Lefschetz fibration $f$ satisfying the cohomological condition above, 
we can take a near-symplectic form $\theta$ so that all the fibers of $f$ are symplectic outside of the singularities. 
Since every fiber of a simplified broken Lefschetz fibration is connected, 
we obtain the following corollary. 

\begin{cor}\label{cor:near-symplectic}

Let $f:M\rightarrow S^2$ be a hyperelliptic simplified broken Lefschetz fibration of genus $g\geq 3$. 
Then there exists a near-symplectic form $\theta$ on $M$ which makes all the fibers of $f$ symplectic outside of the singularities. 

\end{cor}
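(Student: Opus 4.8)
The plan is to verify, for a cleverly chosen cohomology class, the hypothesis of the criterion of Auroux, Donaldson and Katzarkov \cite{ADK} recalled just above, and then to invoke the ``moreover'' part of that criterion to produce $\theta$. Concretely, I would first build a class $h\in H^2(M;\mathbb{R})$ that is positive on every fiber of $f$, and then feed it into \cite{ADK}.

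\emph{Step 1 (producing $h$).} By Theorem \ref{main1} (ii) the regular fiber satisfies $[F]\neq 0$ in $H_2(M;\mathbb{Q})$. Since the intersection form on $H_2(M;\mathbb{Q})$ is nondegenerate (Poincar\'e duality), there is a class $\alpha\in H_2(M;\mathbb{Q})$ with $[F]\cdot\alpha\neq 0$; letting $h$ be the Poincar\'e dual of $\alpha$, and replacing $h$ by $-h$ if necessary, we obtain $h\in H^2(M;\mathbb{Q})\subset H^2(M;\mathbb{R})$ with $h([F])>0$.

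\emph{Step 2 (positivity on all fibers).} Here I would use that $f$ is simplified, so that every fiber is connected; it therefore suffices to check $h([\Sigma])>0$ for each whole fiber $\Sigma$, rather than for components. Every fiber, regular or singular and on either side of the fold circle, is homologous to $F$: fibers on the same side of the fold are visibly homologous, and across the indefinite fold one has $[\Sigma_g]=[\Sigma_{g-1}]$ because the preimage under $f$ of a short arc transverse to the fold circle is a $3$-dimensional cobordism between the two fibers (realizing the surgery along the fold vanishing cycle), whose boundary is exactly $\Sigma_g\sqcup(-\Sigma_{g-1})$. Singular fibers are homologous to nearby regular ones in the usual way. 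Hence $h([\Sigma])=h([F])>0$ for every fiber $\Sigma$.

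\emph{Step 3 (applying \cite{ADK}).} The broken Lefschetz fibration $f$ together with the class $h$ now satisfies the cohomological condition in the criterion of \cite{ADK}, so its ``moreover'' part yields a near-symplectic form $\theta$ on $M$ all of whose fibers are symplectic away from the singularities, which is the assertion. The only non-formal point, and the step I expect to require the most care, is confirming in Step 2 that the genus-$g$ and genus-$(g-1)$ fibers carry the \emph{same} homology class, so that a single $h$ works on both sides of the fold; the fold-cobordism argument above settles this, and everything else is a direct combination of Theorem \ref{main1}(ii), Poincar\'e duality, and the cited criterion.
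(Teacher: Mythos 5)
Your proposal is correct and follows the same route as the paper: the paper derives this corollary directly from Theorem \ref{main1}(ii) together with the Auroux--Donaldson--Katzarkov criterion, using exactly the observation that fibers of a simplified BLF are connected, so a class $h$ with $h([F])>0$ (which exists since $[F]\neq 0$ and the intersection form is nondegenerate) verifies the cohomological hypothesis. Your Step 2, checking that the higher- and lower-side fibers are homologous via the fold cobordism, is the only detail the paper leaves implicit, and your argument for it is sound.
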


Since the self-intersection of a regular fiber of a broken Lefschetz fibration is equal to $0$, 
we also obtain: 

\begin{cor}\label{cor:definitemfd}

A closed oriented four-manifold with definite intersection form cannot admit any hyperelliptic simpified broken Lefschetz fibrations of genus $g\geq 3$. 

\end{cor}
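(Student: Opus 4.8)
The plan is to argue by contradiction, feeding the nontriviality statement of Theorem \ref{main1} (ii) into the elementary linear algebra of definite forms. Suppose that a closed oriented four-manifold $M$ with definite intersection form admitted a hyperelliptic simplified broken Lefschetz fibration $f:M\to S^2$ of genus $g\geq 3$, and let $F$ be a regular fiber of $f$.

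First I would record the two inputs. By Theorem \ref{main1} (ii), the class $[F]$ is nonzero in $H_2(M;\mathbb{Q})$. On the other hand, as noted just above the statement, the self-intersection of a regular fiber of a broken Lefschetz fibration vanishes: two regular fibers lying over distinct regular values that are joined by an arc in the base missing the critical values are isotopic, hence homologous, and may be taken disjoint, so $[F]\cdot[F]=[F]\cdot[F']=0$.

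Next I would invoke the defining property of a definite intersection form. If the intersection form of $M$ is, say, positive definite, then the induced symmetric bilinear form $Q$ on $H_2(M;\mathbb{Q})\cong (H_2(M;\mathbb{Z})/\mathrm{torsion})\otimes\mathbb{Q}$ is still positive definite; in particular $Q(\alpha,\alpha)>0$ for every nonzero $\alpha\in H_2(M;\mathbb{Q})$, and symmetrically $Q(\alpha,\alpha)<0$ in the negative definite case. Applying this to $\alpha=[F]$, which is nonzero by the previous step, yields $[F]\cdot[F]\neq 0$, contradicting $[F]\cdot[F]=0$. This contradiction shows that no such $M$ can admit the fibration, proving the corollary.

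The one point that needs care --- and the reason part (ii) is doing the real work --- is that definiteness is a condition on $H_2(M;\mathbb{Z})$ modulo torsion, so a merely integral nonvanishing of $[F]$ would not suffice: a nonzero torsion class maps to $0$ in $H_2(M;\mathbb{Q})$ and carries no self-intersection information. It is exactly the rational nontriviality supplied by Theorem \ref{main1} (ii) that excludes this possibility. Beyond this observation the deduction is purely formal, so I expect no genuine obstacle once part (ii) is in hand.
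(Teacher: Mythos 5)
Your argument is correct and is exactly the deduction the paper intends: Corollary \ref{cor:definitemfd} is stated as an immediate consequence of Theorem \ref{main1} (ii) together with the vanishing of the fiber's self-intersection, and your observation that rational (not merely integral) nontriviality is what makes the definiteness argument work is the right point of care. No discrepancy with the paper's reasoning.
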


We emphasize that the condition $g\geq 3$ in the above statements is essential. 
Indeed, it is proved in \cite{ADK} that $S^4$ and $\sharp n \overline{\mathbb{CP}^2}$ ($n\geq 1$) admit genus-$1$ simplified broken Lefschetz fibrations. 
Since every simplified broken Lefschetz fibration with genus less than $3$ is hyperelliptic, 
these examples mean that Corollary \ref{cor:near-symplectic} and \ref{cor:definitemfd} do not hold without the assumption $g\geq 3$. 

\par

It is shown in \cite{H} that a simply connected four-manifold with a positive definite intersection form 
cannot admit any genus-$1$ simplified broken Lefschetz fibrations except $S^4$. 
In particular, $\sharp n \mathbb{CP}^2$ ($n\geq 1$) cannot admit any genus-$1$ simplified broken Lefschetz fibrations. 
However, we prove the following theorem. 

\begin{thm}\label{main2}

For each $n\geq 0$, $\sharp n\mathbb{CP}^2$ admits a genus-$2$ simplified broken Lefschetz fibration. 

\end{thm}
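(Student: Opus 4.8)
The plan is to build the fibration by hand from monodromy and handle data and then to identify the total space by Kirby calculus, inducting on the number of Lefschetz singularities. Recall that a genus-$2$ simplified broken Lefschetz fibration is encoded by three pieces of data over the decomposition $S^2=D_{\mathrm{in}}\cup(\text{annulus})\cup D_{\mathrm{out}}$: over the higher-genus disk $D_{\mathrm{out}}$ a genus-$2$ Lefschetz fibration given by nonseparating vanishing cycles $c_1,\dots,c_k\subset\Sigma_2$ with boundary monodromy $W=t_{c_1}\cdots t_{c_k}$; an indefinite fold (round) cobordism attached along a single nonseparating fold cycle $c\subset\Sigma_2$, across which the genus drops to $1$; and the trivial bundle $\Sigma_1\times D^2$ over $D_{\mathrm{in}}$. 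Gluing over $S^2$ forces a compatibility relation among $W$, the fold cycle $c$, and the framing of the round handle, and yields an explicit handle decomposition in which each Lefschetz singularity contributes a $2$-handle along (a fiber lift of) its vanishing cycle with framing $-1$ relative to the fiber, and the fold contributes a round $2$-handle. The task is thus reduced to choosing $c$ and the $c_i$ so that the resulting closed $4$-manifold is $\sharp n\mathbb{CP}^2$.

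I would first settle the base case $n=0$, exhibiting a genus-$2$ simplified broken Lefschetz fibration on $S^4$. This can be done either by a direct handle computation with no Lefschetz singularities ($W=1$) and a suitably chosen fold cycle $c$, or by applying a genus-increasing modification to the standard ADK genus-$1$ fibration on $S^4$. This step also pins down the fiber $\Sigma_2$, the fold cycle, and the background handle picture on which the cases $n\geq1$ are built.

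The heart of the construction is a building block realizing connected sum with a single $\mathbb{CP}^2$. Here I would add one Lefschetz singularity whose vanishing cycle is chosen, relative to the fold cycle $c$, so that its Lefschetz thimble over $D_{\mathrm{out}}$ glues, across the round cobordism, to a compressing disk (the fold cycle bounds on the genus-$1$ side) into an embedded $2$-sphere $S$, and so that $[S]^2=+1$. Iterating with $n$ such singularities, placed over distinct points of $D_{\mathrm{out}}$ and keeping a single embedded fold circle and connected fibers, produces a genus-$2$ simplified broken Lefschetz fibration carrying $n$ disjoint $(+1)$-spheres. It is exactly at this point that the genus-$2$ hypothesis is used: in the genus-$1$ situation the analogous sphere has square $-1$, which is why the ADK genus-$1$ fibrations realize only $\sharp n\overline{\mathbb{CP}^2}$ and why, by \cite{H}, genus-$1$ cannot produce the positive-definite manifolds $\sharp n\mathbb{CP}^2$.

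Finally I would verify by Kirby calculus that the total space is diffeomorphic to $\sharp n\mathbb{CP}^2$, sliding and cancelling the $1$-, round, and $2$-handles to reduce the diagram to $n$ copies of a $(+1)$-framed unknot, thereby upgrading the intersection-form computation to an actual diffeomorphism. I expect the main obstacle to be precisely the framing (sign) computation showing $[S]^2=+1$ rather than $-1$: this requires tracking the fiber framing of the Lefschetz thimble against the framing supplied by the round handle as the fold cycle is compressed, and it is the delicate point that distinguishes the genus-$2$ case. A secondary but necessary check is that all the conditions defining a simplified broken Lefschetz fibration (single embedded fold circle, connected fibers, distinct Lefschetz images lying in the higher-genus region) are preserved throughout the construction.
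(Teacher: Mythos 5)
Your overall framework (monodromy data, round-handle decomposition, Kirby calculus at the end) matches the paper's, but the two steps that carry the actual content of the theorem do not work as described. First, the base case: a genus-$2$ SBLF with empty Hurwitz system $W=1$ has Euler characteristic $\chi(\Sigma_2)+\chi(\Sigma_1)=-2$, not $\chi(S^4)=2$; more generally $\chi(M)=n-2$ where $n$ is the number of Lefschetz singularities, so realizing $\sharp n\mathbb{CP}^2$ forces exactly $n+4$ Lefschetz critical points. (The paper's $S^4$ example has four vanishing cycles $\beta_1,\beta_{-1},\gamma_1,\gamma_2$.) Second, and more seriously, your mechanism for producing a $(+1)$-sphere cannot work as stated: the Lefschetz $2$-handle is attached with framing $-1$ relative to the fiber, and the round $2$-handle is attached with the fiber framing, so a sphere assembled from one Lefschetz thimble and a compressing disk across the round cobordism has square $-1$, never $+1$ --- this is precisely why the ADK genus-$1$ constructions yield $\sharp n\overline{\mathbb{CP}^2}$. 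There is no local choice of vanishing cycle that flips this sign; in the paper the $+1$-framed unknots emerge only after a long global sequence of handle slides involving the round handle and all the Lefschetz handles simultaneously, starting from a carefully chosen Hurwitz system $(t_{\alpha_1},\dots,t_{\alpha_{s-2}},t_{\beta_{s-1}},t_{\beta_{-1}},t_{\gamma_1},t_{\gamma_2})$ verified to lie in $\Ker\Phi_{c_5}$.

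A further gap is the fundamental group. With the paper's vanishing cycles the naive total space $M_s$ has $\pi_1(M_s)\cong\mathbb{Z}$, generated by a loop in the fiber, and an essential extra step is a multiplicity-$1$ logarithmic transformation on a regular fiber in the lower side, which kills $\pi_1$ while preserving the SBLF structure; only the resulting $\tilde{M}_s$ is diffeomorphic to $\sharp(s-2)\mathbb{CP}^2$. Your proposal never confronts simple connectivity, and an induction of the form ``add one Lefschetz singularity $=$ connected sum with $\mathbb{CP}^2$'' has no a priori justification: attaching a $(-1)$-framed $2$-handle along a vanishing cycle changes the total space by a surgery that is a connected sum with $\pm\mathbb{CP}^2$ only under special homotopical and framing conditions that you would still need to establish. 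To repair the argument you would need to (a) specify explicit vanishing cycles and verify the kernel condition of Baykur's realization lemma, (b) control $\pi_1$ (e.g.\ by the logarithmic transform), and (c) carry out the global Kirby calculus rather than a local sphere-by-sphere count.
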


The above theorem also means that Corollary \ref{cor:definitemfd} does not hold without the assumption on genus. 
Moreover, it is easy to see that the fibration in Theorem \ref{main2} cannot be compatible with any near-symplectic forms 
although $\sharp n\mathbb{CP}^2$ ($n\geq 1$) admits a near-symplectic form.  

\par

In general, a genus-$g$ simplified broken Lefschetz fibration can be changed into a genus-$(g+1)$ simplfied broken Lefschetz fibration 
by a certain homotopy of fibrations, called {\it flip and slip} (for the detail of this homotopy, see e.g. \cite{Ba2}). 
Therefore, for any $g\geq 3$, we can easily construct genus-$g$ simplified broken Lefschetz fibrations on $S^4$, $\sharp n\mathbb{CP}^2$ and $\sharp n\overline{\mathbb{CP}^2}$ ($n\geq 1$). 
However, these fibrations are not hyperelliptic because of Corollary \ref{cor:definitemfd}. 

\par

In Section \ref{section:preliminaries}, we review the definitions of broken Lefschetz fibrations and simplified ones. 
We also review the basic properties of monodromy representations of broken Lefschetz fibrations. 
After reviewing the hyperelliptic mapping class group, we give the definition of hyperelliptic simplified broken Lefschetz fibrations. 
In Section \ref{section:preserve_c}, we prove a certain Lemma on the subgroup of the hyperelliptic mapping class group 
which consists of elements preserving a simple closed curve $c$. 
This lemma plays a key role in the proof of Theorem \ref{main1}. 
In Section \ref{section:involution}, we give the proof of Theorem \ref{main1}. 
In Section \ref{section:genus-2}, we construct a genus-$2$ simplified broken Lefschetz fibration
on $\sharp n \mathbb{CP}^2$ ($n\geq 1$) to prove Theorem \ref{main2}.


\section{Preliminaries}\label{section:preliminaries}

\subsection{Broken Lefschetz fibrations}

We first give the precise definition of broken Lefschetz fibrations. 

\begin{defn}\label{definitionBLF}

Let $M$ and $\Sigma$ be compact oriented smooth manifolds of dimension $4$ and $2$, respectively. 
A smooth map $f:M\rightarrow \Sigma$ is called a {\it broken Lefschetz fibration} ({\it BLF}, for short) 
if it satisfies the following conditions: 

\begin{enumerate}

\item $f^{-1}(\partial \Sigma)=\partial M$; 

\item $f$ has at most the two types of singularities which is locally written as follows: 

\begin{itemize}

\item $(z_1,z_2)\mapsto \xi = z_1z_2$, where $(z_1,z_2)$ (resp. $\xi$) 
is a complex local coordinate of $M$ (resp. $\Sigma$) 
compatible with its orientation; 

\item $(t,x_1,x_2,x_3)\mapsto (y_1,y_2)=(t,{x_1}^2+{x_2}^2-{x_3}^2)$, 
where $(t,x_1,x_2,x_3)$ (resp. $(y_1,y_2)$) is a real coordinate of $M$ (resp. $\Sigma$). 

\end{itemize}

\end{enumerate}

\end{defn}

The first singularity in the condition (2) of Definition \ref{definitionBLF} is called a 
{\it Lefschetz singularity} and the second one is called an {\it indefinite fold singularity}. 
We denote by $\mathcal{C}_f$ the set of Lefschetz singularities of $f$ 
and by $Z_f$ the set of indefinite fold singularities of $f$. 
We remark that a Lefschetz fibration is a BLF which has no indefinite fold singularities. 

Let $f:M\rightarrow S^2$ be a BLF over the $2$-sphere. 
Suppose that the restriction of $f$ to the set of singularities is injective and that  
the image $f(Z_f)$ is the disjoint union of embedded circles parallel to the equator of $S^2$. 
We put $f(Z_f)=Z_1\amalg \cdots \amalg Z_m$, where $Z_i$ is the embedded circle in $S^2$. 
We choose a path $\alpha:[0,1]\rightarrow S^2$ satisfying the following properties: 

\begin{enumerate}

\item $\Im{\alpha}$ is contained in the complement of $f(\mathcal{C}_f)$; 

\item $\alpha$ starts at the south pole $p_s\in S^2$ and connects the south pole to the north pole $p_n\in S^2$; 

\item $\alpha$ intersects each component of $f(Z_f)$ at one point transversely. 

\end{enumerate}

\noindent
We put $\{q_i\}=Z_i\cap \Im{\alpha}$ and $\alpha(t_i)=q_i$. 
We assume that $q_1,\ldots,q_m$ appear in this order when we go along $\alpha$ from $p_s$ to $p_n$ (see Figure \ref{directedBLF}). 

\begin{figure}[htbp]
\begin{center}
\includegraphics[width=50mm]{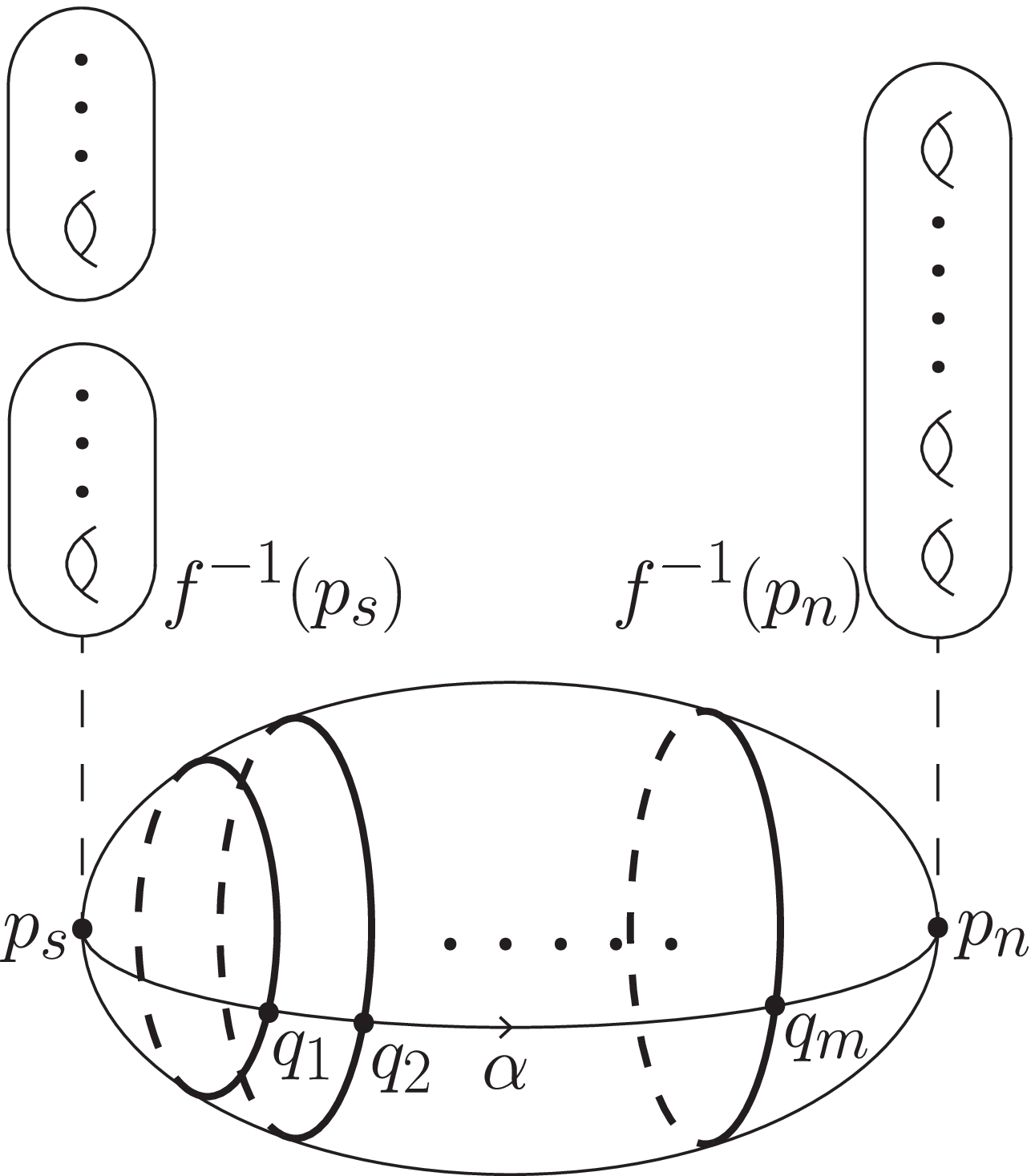}
\end{center}
\caption{The example of the path $\alpha$. 
The bold circles describe $f(Z_f)$. }
\label{directedBLF}
\end{figure}

The preimage $f^{-1}(\Im{\alpha})$ is a $3$-manifold which is a cobordism  
between $f^{-1}(p_s)$ and $f^{-1}(p_n)$. 
By the local coordinate description of the indefinite fold singularity, 
it is easy to see that $f^{-1}(\alpha([0,t_i+\varepsilon]))$ is obtained from 
$f^{-1}(\alpha([0,t_i-\varepsilon]))$ by either $1$ or $2$-handle attachment for each $i=1,\ldots,m$. 
In particular, we obtain a handle decomposition of the cobordism $f^{-1}(\Im{\alpha})$. 

\begin{defn}\label{defn:directedBLF}

A BLF $f$ is said to be {\it directed} if it satisfies the following conditions: 

\begin{enumerate}

\item the restriction of $f$ to the set of singularities is injective and   
the image $f(Z_f)$ is the disjoint union of embedded circles parallel to the equator of $S^2$; 

\item all the handles in the above handle decomposition of $f^{-1}(\Im{\alpha})$ is index-$1$;  

\item all Lefschetz singularities of $f$ are in the preimage of the component of $S^2\setminus (Z_1\amalg\cdots\amalg Z_m)$ 
which contains the point $p_n$. 

\end{enumerate}

\end{defn}

The third condition in the above definition is not essential. 
Indeed, we can change a BLF $f$ which satisfies the conditions (1) and (2) so that it satisfies the condition (3) (cf. Baykur \cite{Ba}). 

\par

For a directed BLF $f$, we assume that the set of indefinite fold singularities of $f$ is connected 
and that all the fibers of $f$ are connected. 
We call such a BLF a {\it simplified broken Lefschetz fibration}. 
For a simplified BLF $f$, $Z_f$ is empty set or an embedded circle in $M$. 
If $Z_f$ is not empty, the image $f(Z_f)$ is an embedded circle in $S^2$. 
So $S^2\setminus \Int{\nu f(Z_f)}$ consists of two $2$-disks $D_1$ and $D_2$ 
and the genus of the regular fiber of the fibration $\text{res}f:f^{-1}(D_1)\rightarrow D_1$ is just one higher than 
that of the fibration $\text{res}f:f^{-1}(D_2)\rightarrow D_2$. 
we call $f^{-1}(D_1)$ (resp. $f^{-1}(D_2)$) the {\it higher side} (resp. {\it lower side}) of $f$ 
and $f^{-1}(\nu f(Z_f))$ the {\it round cobordism} of $f$. 
By the definition, all the Lefschetz singularities of $f$ are in the higher side of $f$. 
We call the genus of the regular fiber in the higher side the {\it genus} of $f$. 

In this paper, we will use the abbreviation SBLF 
to refer to a simplified BLF.

\subsection{Monodromy representations}\label{subsec_monodromy}

Let $f:M\rightarrow B$ be a genus-$g$ Lefschetz fibration. 
We denote by $\mathcal{C}_f=\{z_1,\ldots,z_n\}$ the set of Lefschetz singularities of $f$ 
and put $y_i=f(z_i)$. 
For a base point $y_0\in B\setminus f(\mathcal{C}_f)$, 
a homomorphism $\varrho_f:\pi_1(B\setminus f(\mathcal{C}_f) , y_0)\rightarrow \mathcal{M}_g$, 
called a {\it monodromy representation} of $f$, can be defined, 
where $\mathcal{M}_g=\Diff{^+\Sigma_g}/\Diff{^+_0\Sigma_g}$ is the mapping class group of the closed oriented surface $\Sigma_g$ 
We endow the $C^\infty$ topology with $\Diff{^+\Sigma_g}$ and then $\Diff{^+_0\Sigma_g}$ is the component of $\Diff{^+\Sigma_g}$ 
containing the identity map. 
(the reader should refer to \cite{GS} for the precise definition of this homomorphism). 

We look at the case $B=D^2$. 
For each $i=1,\ldots, n$, we take an embedded path $\alpha_i\subset D^2$ satisfying the following conditions: 

\begin{itemize}

\item each $\alpha_i$ connects $y_0$ to $y_i$; 

\item $\alpha_i\cap f(\mathcal{C}_f)=\{y_i\}$; 

\item $\alpha_i\cap\alpha_j=\{y_0\}$, for all $i\neq j$; 

\item $\alpha_1,\ldots, \alpha_n$ appear in this order when we travel counterclockwise around $y_0$. 

\end{itemize} 

\noindent
For each $i=1,\ldots,n$, we denote by $a_i\in \pi_1(D^2\setminus f(\mathcal{C}_f) , y_0)$ 
the element represented by the loop obtained by connecting a counterclockwise circle 
around $y_i$ to $y_0$ by using $\alpha_i$. 
We put $W_f=(\varrho_f(a_1),\ldots, \varrho_f(a_n))\in {\mathcal{M}_g}^n$. 
This sequence is called a {\it Hurwitz system} of $f$. 
By the conditions on paths $\alpha_1,\ldots,\alpha_n$, 
the product $\varrho_f(a_1)\cdot\cdots\cdot\varrho_f(a_n)$ is equal to 
the monodromy of the boundary of $D^2$. 
It is known that each $\varrho_f(a_i)$ is the right-handed Dehn twist 
along a certain simple closed curve $c_i$, called a {\it vanishing cycle} 
of the Lefschetz singularity $z_i$ (cf. \cite{Kas} or \cite{Matsumoto3}). 

\begin{rem}

$W_f$ is not unique for $f$. 
Indeed, it depends on the choice of paths $\alpha_1,\ldots, \alpha_n$ 
and the choice of the identification of $f^{-1}(y_0)$ with the closed oriented surface $\Sigma_g$. 
However, it is known that another Hurwitz system $\tilde{W_f}$ is obtained from $W_f$ 
by successive application of the following transformations: 

\begin{itemize}

\item $(\ldots, g_i,g_{i+1},\ldots)\mapsto (\ldots, g_{i+1}, {g_{i+1}}^{-1}g_ig_{i+1}, \ldots)$ and its inverse transformation; 

\item $(g_1,\ldots,g_n)\mapsto (h^{-1}g_1h,\ldots,h^{-1}g_nh)$,  

\end{itemize}

\noindent
where $g_i,h\in \mathcal{M}_g$ (cf. \cite{GS}). 
Two sequences of elements in $\mathcal{M}_g$ is called {\it Hurwitz equivalent} 
if one is obtained from the other by successive application of the transformations above

\end{rem}

Let $\hat{f}:M\rightarrow S^2$ be a genus-$g$ SBLF 
with non-empty indefinite fold singularities. 
We denote by $M_h$ the higher side of $\hat{f}$. 
Then the restriction $\text{res}\hat{f}:M_h\rightarrow D^2$ is a Lefschetz fibration over $D^2$. 
So a monodromy representation and a Hurwitz system of $\text{res}\hat{f}$ can be defined 
and are called a {\it monodromy representation} and a {\it Hurwitz system} of $\hat{f}$, respectively. 
We denote them by $\varrho_{\hat{f}}$ and $W_{\hat{f}}$. 
For the Lefschetz fibration $\text{res}\hat{f}:M_h\rightarrow D^2$, 
we choose a base point $y_0$ and paths $\alpha_1,\ldots,\alpha_n$ 
as in the preceding paragraph. 
We also take a path $\alpha:[0,1]\rightarrow S^2$ satisfying the following conditions: 

\begin{itemize}

\item $\alpha$ starts at the base point $y_0$ 
and connects $y_0$ to a point in the image of the lower side of $\hat{f}$; 

\item for each $i=1,\ldots, n$, $\alpha\cap \alpha_i=\{y_0\}$; 

\item $\alpha$ intersects the image $\hat{f}(Z_{\hat{f}})$ at one point transversely; 

\item $\Im{\alpha} \cap \hat{f}(\mathcal{C}_{\hat{f}})=\emptyset$; 

\item $\alpha_1,\ldots,\alpha_n,\alpha$ appear in this order when we travel counterclockwise around $y_0$. 

\end{itemize}

\noindent
We put $q=\alpha(t)\in \Im{\alpha}\cap \hat{f}(Z_{\hat{f}})$. 
The preimage $\hat{f}^{-1}(\alpha([0,t+\varepsilon]))$ 
is obtained from the preimage $\hat{f}^{-1}(\alpha([0,t-\varepsilon]))\cong \hat{f}^{-1}(p_0)\times [0,t-\varepsilon]$ 
by the $2$-handle attachment. 
We regard the attaching circle $c$ of the $2$-handle as a simple closed curve in $\hat{f}^{-1}(p_0)\cong \Sigma_g$. 
We call this simple closed curve a {\it vanishing cycle} of the indefinite fold singularity of $\hat{f}$. 

\begin{lem}[Baykur \cite{Ba}]

The product $\varrho_{\hat{f}}(a_1)\cdot\cdots\cdot\varrho_{\hat{f}}(a_n)$ is contained in $\mathcal{M}_g(c)$, 
where $\mathcal{M}_g(c)$ is the subgroup of the group $\mathcal{M}_g$ which consists of elements 
represented by a map preserving the curve $c$, that is, 
\[
\mathcal{M}_g(c)=\{[T]\in \mathcal{M}_g | T(c)=c\}. 
\]

\end{lem}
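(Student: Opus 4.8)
The plan is to identify the product $\varrho_{\hat{f}}(a_1)\cdots\varrho_{\hat{f}}(a_n)$ with the monodromy of a surface bundle over a circle parallel to the fold locus, and then to show that this monodromy must preserve the vanishing cycle $c$ because the indefinite fold equips a neighborhood of the fold circle with an $S^1$-family of vanishing cycles, each isotopic to $c$.

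First I would use the feature of Hurwitz systems already recorded above: since $\alpha_1,\ldots,\alpha_n$ appear counterclockwise around $y_0$, the product $\Phi:=\varrho_{\hat{f}}(a_1)\cdots\varrho_{\hat{f}}(a_n)$ is exactly the monodromy of the boundary loop of $D^2=D_1$, i.e.\ the monodromy of the $\Sigma_g$-bundle $\hat{f}^{-1}(\partial D_1)\to\partial D_1$. Because $\partial D_1$ is isotopic, through the higher side, to the fold circle $\hat{f}(Z_{\hat{f}})$, this is also the monodromy of the $\Sigma_g$-bundle obtained by restricting $\hat{f}$ to the higher-side boundary of the round cobordism $\hat{f}^{-1}(\nu\hat{f}(Z_{\hat{f}}))$.

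Next I would analyze this round cobordism via the local normal form of the indefinite fold. Writing the fold as $(t,x_1,x_2,x_3)\mapsto(t,x_1^2+x_2^2-x_3^2)$, for each fixed $t$ the second coordinate is a Morse function of index $1$, and the attaching circle of the associated $2$-handle is $c_t=\{x_1^2+x_2^2=\mathrm{const},\ x_3=0\}$, which sits in the nearby higher-genus fiber and is isotopic to the vanishing cycle $c$. Since this normal form is independent of $t$, the curves $c_t$ vary continuously as $t$ runs over the fold circle $\hat{f}(Z_{\hat{f}})\cong S^1$ and assemble into a well-defined $S^1$-family of simple closed curves in the fibers of $\hat{f}^{-1}(\partial D_1)\to\partial D_1$. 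Here the hypothesis that $Z_{\hat{f}}$ is a single embedded circle and that the round cobordism is connected is what guarantees the family is defined over the whole of $\partial D_1$.

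Finally I would conclude by a continuity/monodromy argument: parametrizing $\partial D_1$ by $[0,2\pi]$ and transporting $c=c_0$ around by the bundle's parallel transport returns the curve $c_{2\pi}=\Phi(c_0)$, which by the previous step is isotopic to $c_0$. Hence $\Phi(c)=c$ in $\mathcal{M}_g$, that is, $\varrho_{\hat{f}}(a_1)\cdots\varrho_{\hat{f}}(a_n)\in\mathcal{M}_g(c)$. I expect the main obstacle to be the middle step: one must argue rigorously that the locally defined vanishing cycles of the fold genuinely glue into a single global $S^1$-family over the fold circle and that, up to isotopy, this family lives in the higher-genus fibers along all of $\partial D_1$; this is precisely where the structure of the round cobordism enters, and it is the geometric heart of the statement.
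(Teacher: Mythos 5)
Your argument is correct, and it is essentially the intended one: the paper states this lemma without proof, attributing it to Baykur, and your reasoning is exactly the content of the round-handle description that the paper recalls in subsection \ref{subsec_handle} --- namely that $M_h\cup M_r$ is obtained from $M_h$ by attaching a round $2$-handle whose circles $\{t\}\times\partial D^2\times\{0\}$ go along the vanishing cycle of the fold. The existence of that attachment over the whole fold circle is equivalent to your middle step (the locally defined vanishing cycles $c_t$ gluing into a circle subbundle of $\hat{f}^{-1}(\partial D_1)\to\partial D_1$), and choosing the parallel transport tangent to this subbundle shows the boundary monodromy, which equals $\varrho_{\hat{f}}(a_1)\cdots\varrho_{\hat{f}}(a_n)$, is represented by a diffeomorphism preserving $c$. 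No gaps.
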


For an element $\psi\in \mathcal{M}_g(c)$, 
we take a representative $T\in \psi$ so that $T$ preserves the curve $c$. 
Then $T$ induces the homeomorphism $T: \Sigma_g\setminus c\rightarrow \Sigma_g\setminus c$ 
and this homeomorphism can be extended to the homeomorphism $\hat{T}:\Sigma_{g-1}\rightarrow \Sigma_{g-1}$ 
by regarding $\Sigma_g\setminus c$ as the genus-$(g-1)$ surface with two punctures. 
Eventually, we can define the homomorphism $\Phi_c$ as follows: 
\[
\begin{array}{rccc}
\Phi_c : &\mathcal{M}_g(c) & \longrightarrow & \mathcal{M}_{g-1}  \\
& \rotatebox{90}{$\in$}\hspace{.3em} &  & \rotatebox{90}{$\in$} \\ 
& \psi=[T]\hspace{.3em} & \longmapsto     & \hspace{.3em}[\hat{T}].  
\end{array}
\]

\begin{rem}

Let $c\subset \Sigma_g$ be a separating simple closed curve. 
We can regard $\Sigma_g\setminus c$ as the disjoint union of the two once punctured surfaces of genus $h$ and $g-h$. 
So we can define the homomorphism $\Phi_c:\mathcal{M}_g(c)\rightarrow \mathcal{M}_h\times \mathcal{M}_{g-h}$ 
as we define $\Phi_c$ for a non-separating curve $c$, 
where $\mathcal{M}_g(c^\text{ori})$ is the subgroup of $\mathcal{M}_g(c)$ which consists of elements represented by maps preserving $c$ and its orientation.  

\end{rem}

\begin{lem}[Baykur \cite{Ba}]\label{conditionHur}

The product $\varrho_{\hat{f}}(a_1)\cdot\cdots\cdot\varrho_{\hat{f}}(a_n)$ is contained in the kernel of $\Phi_c$. 
Conversely, if simple closed curves $c,c_1,\ldots,c_n\subset \Sigma_g$ satisfy the following conditions: 

\begin{itemize}

\item $c$ is non-separating; 

\item $t_{c_1}\cdot\cdots\cdot t_{c_n}\in\Ker{\Phi_c}$, 

\end{itemize}

\noindent
then there exists a genus-$g$ SBLF $f:M\rightarrow S^2$ such that 
$W_f=(t_{c_1},\ldots, t_{c_n})$ and a vanishing cycle of the indefinite fold of $f$ is $c$.  

\end{lem}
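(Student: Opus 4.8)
The plan is to prove both directions of Lemma~\ref{conditionHur} by reading off the topology of the round cobordism directly from its handle description. Recall from the construction in Section~\ref{subsec_monodromy} that the round cobordism attaches a single round $2$-handle along the vanishing cycle $c$, which on the level of the fiber amounts to surgering $\Sigma_g$ along $c$ to produce $\Sigma_{g-1}$. The homomorphism $\Phi_c$ was defined precisely to record the induced action on this surgered surface $\Sigma_{g-1}$. So the first observation to make is that the monodromy of the boundary of the higher side, namely the product $\varrho_{\hat f}(a_1)\cdots\varrho_{\hat f}(a_n)$, must, after passing through the round cobordism to the lower side, become isotopic to the identity on the lower-genus fiber $\Sigma_{g-1}$, because the lower side is a trivial fibration over the disk $D_2$ with closed (monodromy-trivial) boundary. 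This is exactly the statement that the product lies in $\Ker\Phi_c$, which refines Baykur's preceding lemma that it lies in $\mathcal{M}_g(c)$.

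For the forward implication, I would argue as follows. First invoke the previously stated lemma to conclude $\varrho_{\hat f}(a_1)\cdots\varrho_{\hat f}(a_n)\in\mathcal{M}_g(c)$, so that $\Phi_c$ is defined on it and a representative $T$ preserving $c$ exists. The induced map $\hat T:\Sigma_{g-1}\to\Sigma_{g-1}$ is, up to isotopy, the total monodromy of the fibration restricted to the boundary $\partial D_2$ of the lower side, viewed after the round handle attachment collapses $c$. Since the lower side $\hat f^{-1}(D_2)$ is a genuine (singularity-free) surface bundle over the disk $D_2$, its restriction to $\partial D_2$ has monodromy isotopic to the identity in $\mathcal{M}_{g-1}$. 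Tracing the identification carefully, $[\hat T]$ is exactly this boundary monodromy, so $\Phi_c(\varrho_{\hat f}(a_1)\cdots\varrho_{\hat f}(a_n))=[\hat T]=1$, giving membership in $\Ker\Phi_c$.

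For the converse, the plan is a direct construction. Given non-separating $c$ and curves $c_1,\dots,c_n$ with $t_{c_1}\cdots t_{c_n}\in\Ker\Phi_c$, I would first build the higher side $M_h\to D^2$ as the Lefschetz fibration over the disk whose Hurwitz system is $(t_{c_1},\dots,t_{c_n})$; this is the standard construction of a Lefschetz fibration from a prescribed word of Dehn twists, attaching one Lefschetz $2$-handle along each $c_i$ to $\Sigma_g\times D^2$. Next I would attach the round cobordism along $c$: since $c$ is non-separating, surgering along it yields the connected surface $\Sigma_{g-1}$, and this round $2$-handle attachment realizes the transition from genus $g$ to genus $g-1$ fibers. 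The boundary monodromy of $M_h$ over $\partial D^2$ is the product $t_{c_1}\cdots t_{c_n}$, and by hypothesis its image under $\Phi_c$ is trivial in $\mathcal{M}_{g-1}$; this is exactly the condition needed for the surgered boundary fibration to extend as the trivial $\Sigma_{g-1}$-bundle over the complementary disk $D_2$, which we glue in to form the lower side. Capping off produces a closed four-manifold $M$ with an SBLF $f:M\to S^2$ whose Hurwitz system is $(t_{c_1},\dots,t_{c_n})$ and whose indefinite fold vanishing cycle is $c$.

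The main obstacle I expect is the converse, and specifically the gluing step: one must verify that the condition $t_{c_1}\cdots t_{c_n}\in\Ker\Phi_c$ is not merely necessary but genuinely sufficient to extend the surgered boundary fibration over $D_2$ as a product bundle. This requires checking that the mapping class $\hat T$ on $\Sigma_{g-1}$ being isotopic to the identity can be realized by an isotopy that is compatible with the two boundary circles (punctures) coming from $c$, so that the extension over $D_2$ closes up smoothly across the round cobordism. In other words, the subtlety lies in controlling the behavior near the two copies of $c$ created by the surgery and ensuring that the chosen representative $T$ of the boundary monodromy restricts appropriately there; the definition of $\Phi_c$ via the punctured surface is designed to make this work, but the identification of framings and the matching of collar neighborhoods across the round handle is where the care is needed. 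The forward direction, by contrast, is essentially a bookkeeping exercise once the round cobordism is understood as fiberwise surgery along $c$.
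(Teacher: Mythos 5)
The paper does not actually prove this lemma---it is quoted from Baykur's paper \cite{Ba} and used as a black box---so there is no in-paper argument to compare against. Judged on its own, your proposal follows what is essentially the standard (and, as far as I can tell, Baykur's own) route, and it is sound. The forward direction is correct: the monodromy of $\partial M_h$ is the product $\varrho_{\hat f}(a_1)\cdots\varrho_{\hat f}(a_n)$, the round cobordism is fiberwise surgery along $c$ so that the monodromy of $\partial(M_h\cup M_r)$ is the image of that product under $\Phi_c$ (using the preceding lemma to know the product lies in $\mathcal{M}_g(c)$ so that $\Phi_c$ applies), and this must be trivial because the lower side is a singularity-free fibration over a disk, hence a trivial bundle with trivial boundary monodromy. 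The converse is likewise the right construction: build $M_h$ from the Hurwitz system by Lefschetz $2$-handle attachments, attach a round $2$-handle along $c$ using a representative of the total monodromy that preserves a tubular neighborhood $\nu c$ (available since the class lies in $\mathcal{M}_g(c)$), observe that the new boundary is a $\Sigma_{g-1}$-bundle over $S^1$ with monodromy $\Phi_c(t_{c_1}\cdots t_{c_n})=1$, and cap with $D^2\times\Sigma_{g-1}$; non-separation of $c$ gives connected fibers and a connected fold circle, so the result is an SBLF. The gluing subtleties you flag are real but harmless for a pure existence statement: the capping of a trivial $\Sigma_{g-1}$-bundle over $S^1$ by $D^2\times\Sigma_{g-1}$ may a priori depend on the chosen trivialization, and the round-handle attaching map on the interval factor involves a choice, but any such choice produces a manifold with the required Hurwitz system and fold vanishing cycle (the paper itself remarks that this ambiguity genuinely produces different total spaces in low genus, which is consistent with the lemma asserting only existence). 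So I see no gap; the one point worth making explicit in a written-up version is the identification of the boundary monodromy after the round handle with $\Phi_c$ of the total monodromy, which is exactly the content of the surgery description you give.
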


\subsection{The hyperelliptic mapping class group}

Let $\Sigma_g$ be a closed oriented surface of genus $g\ge1$. 
Denote by $\iota:\Sigma_g\to \Sigma_g$ an involution described in Figure \ref{involution}. 

\begin{figure}[htbp]
\begin{center}
\includegraphics[width=120mm]{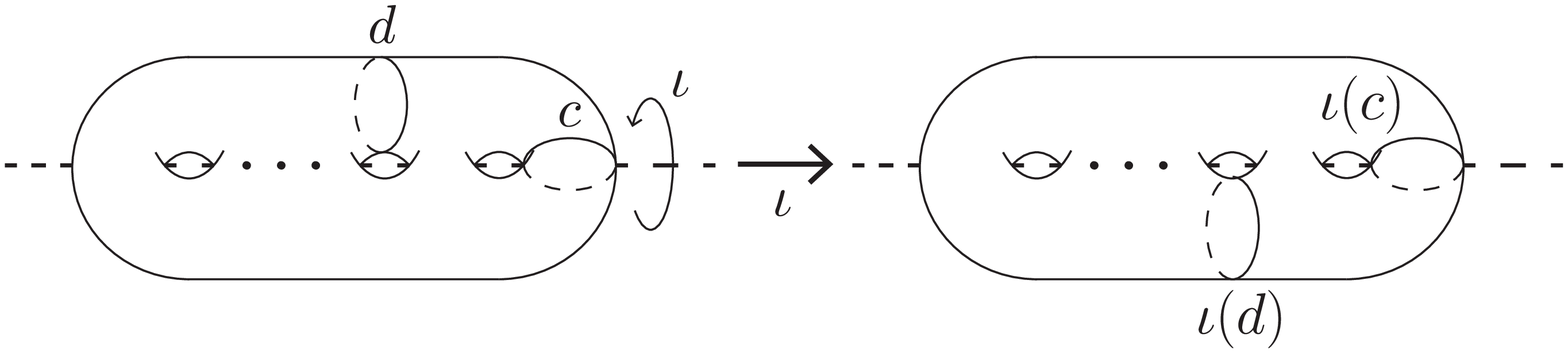}
\end{center}
\caption{the hyperelliptic involution on the surface $\Sigma_g$. }
\label{involution}
\end{figure}

Let $C(\iota)$ denote the centralizer of $\iota$ in the diffeomorphism group $\Diff_+\Sigma_g$, 
and endow $C(\iota)\subset \Diff_+\Sigma_g$ with the relative topology. 
The inclusion homomorphism $C(\iota)\to \Diff_+\Sigma_g$ induces 
a natural homomorphism $\pi_0C(\iota)\to \mathcal{M}_g$ between their path-connected components. 

\begin{thm}[Birman-Hilden \cite{Birman-Hilden}]\label{thm:hypMCG}
When $g\ge2$, the homomorphism $\pi_0C(\iota)\to \mathcal{M}_g$ is injective.
\end{thm}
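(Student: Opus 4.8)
The plan is to unpack the injectivity of $\pi_0C(\iota)\to\mathcal{M}_g$ into the concrete statement that any $\iota$-equivariant diffeomorphism which is isotopic to the identity is isotopic to the identity \emph{through} $\iota$-equivariant diffeomorphisms. In other words, writing $C(\iota)_0$ for the identity component of $C(\iota)$, I must show $C(\iota)\cap\Diff_0\Sigma_g = C(\iota)_0$; since the inclusion $C(\iota)_0\subseteq C(\iota)\cap\Diff_0\Sigma_g$ is automatic, the whole content is that $C(\iota)\cap\Diff_0\Sigma_g$ is path-connected. The hypothesis $g\geq 2$ enters at the very first step: I would fix a hyperbolic metric $m$ on $\Sigma_g$ for which $\iota$ acts as an isometry. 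Such an $m$ exists because the quotient orbifold $\Sigma_g/\iota$, a sphere with $2g+2$ cone points of order $2$, is hyperbolic precisely when $g\geq 2$; uniformizing an $\iota$-invariant complex structure (equivalently, pulling back a hyperbolic orbifold metric) yields $m$ with $\iota\in\mathrm{Isom}(m)$. For $g\leq 1$ no such negatively curved invariant metric exists, which is exactly why the statement is restricted to $g\geq 2$.

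The main mechanism is the harmonic map heat flow of Eells and Sampson, taken with respect to $m$ on both domain and target. Given $\phi\in C(\iota)$ isotopic, hence homotopic, to the identity, I would run the flow $\phi_t$ with $\phi_0=\phi$. Because the target $(\Sigma_g,m)$ has negative curvature, the flow exists for all time and converges to the unique harmonic map homotopic to $\phi$; as the identity map is itself harmonic (an isometry is harmonic) and is homotopic to $\phi$, uniqueness of harmonic maps into negatively curved targets forces $\phi_t\to\mathrm{id}$. The equivariance is now free, and this is the conceptual heart of the argument: since $\iota$ is an isometry of $(\Sigma_g,m)$ and $\phi$ commutes with $\iota$, the conjugated path $\iota\phi_t\iota^{-1}$ solves the same heat equation with the same initial datum $\iota\phi\iota^{-1}=\phi$, so by uniqueness of the flow $\iota\phi_t\iota^{-1}=\phi_t$ for every $t$. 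Thus each $\phi_t$ lies in $C(\iota)$, and reparametrizing the convergent flow gives a continuous path from $\phi$ to $\mathrm{id}$ inside $C(\iota)$, i.e. $[\phi]=1$ in $\pi_0C(\iota)$.

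The step I expect to be the main obstacle is ensuring that the flow $\phi_t$ genuinely remains within $\Diff_+\Sigma_g$, so that it is an honest \emph{isotopy} and not merely a homotopy through smooth maps. This is where the regularity theory of harmonic maps between surfaces is needed: for a nonpositively curved target a harmonic map homotopic to a diffeomorphism is itself a diffeomorphism (Schoen--Yau, Sampson), and along the flow the Jacobian stays positive so that the diffeomorphism property is preserved for all finite time and in the limit. I would isolate and cite these analytic inputs precisely, after which the topological conclusion is immediate.

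As a purely topological alternative I would instead push $\phi$ down through the branched double cover $\Sigma_g\to\Sigma_g/\iota\cong S^2$ ramified over the $2g+2$ fixed points of $\iota$, reducing the claim to an equivariant-isotopy statement on this cover and controlling the downstairs isotopy by the Alexander method on the punctured sphere. That route, however, rests on the same core difficulty, namely promoting an isotopy on the base to an $\iota$-equivariant isotopy upstairs, so I prefer the heat-flow argument, in which equivariance comes for free from the uniqueness of the flow.
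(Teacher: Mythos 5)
There is no proof of this statement in the paper to compare against: Theorem \ref{thm:hypMCG} is quoted from Birman--Hilden, whose argument is the ``purely topological alternative'' you mention at the end (descending to the $2g+2$-marked sphere $\Sigma_g/\iota$ and lifting isotopies through the branched cover), not a harmonic-map argument. So your proposal must stand on its own, and it has one genuine gap.

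Your reduction of injectivity to the path-connectedness of $C(\iota)\cap\Diff_0\Sigma_g$, the construction of an $\iota$-invariant hyperbolic metric for $g\ge2$, the Eells--Sampson/Hartman convergence $\phi_t\to\mathrm{id}$, and the uniqueness-of-solutions argument forcing $\iota\phi_t\iota^{-1}=\phi_t$ are all correct. The gap is exactly the step you flag as ``the main obstacle'' and then dispose of by citation: the claim that $\phi_t$ remains a diffeomorphism for all finite $t$. The Schoen--Yau and Sampson theorems you invoke concern \emph{harmonic maps}, i.e.\ the $t=\infty$ limit (where they are anyway redundant here, since Hartman's uniqueness already identifies the limit with the identity); they say nothing about the flow at finite time. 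There is no maximum-principle argument known that keeps the Jacobian of $\phi_t$ positive along the harmonic map heat flow, and whether this flow preserves the diffeomorphism property between closed hyperbolic surfaces is, to the best of current knowledge, an open problem (the analogous statement is known for other flows, e.g.\ Wang's mean curvature flow of area-preserving maps, but not here). Without it, $\{\phi_t\}$ is only a path in the monoid of $\iota$-equivariant smooth self-maps, so you obtain an $\iota$-equivariant \emph{homotopy} from $\phi$ to the identity rather than an isotopy, and this does not compute $\pi_0C(\iota)$ with $C(\iota)\subset\Diff_+\Sigma_g$. Observing that $\phi_t$ is a diffeomorphism for $t$ large (being $C^1$-close to the identity) does not repair this, since the segment from $\phi_0$ to $\phi_T$ may still leave $\Diff_+\Sigma_g$. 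To make the proof complete you would need either a genuinely different mechanism for producing the equivariant isotopy (e.g.\ an equivariant version of the Earle--Eells retraction of $\Diff_0$ built from the \emph{unique harmonic diffeomorphism} in each isotopy class rather than from the flow), or the branched-covering argument of Birman--Hilden themselves.
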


Denote the image of this homomorphism by $\mathcal{H}_g$ for $g\ge1$. 
This group is called the {\it hyperelliptic mapping class group}. 
In fact, they showed the above result in more general settings later. See \cite{Birman-Hilden} for details. 

\par

A Lefschetz fibration is said to be {\it hyperelliptic} if we can take an identification of the fiber of a base point 
with the closed oriented surface so that the image of the monodromy representation of the fibration 
is contained in the hyperelliptic mapping class group. 
So it is natural to generalize this definition to directed (and especially simplified) BLFs as follows: 
Let $f:M\rightarrow S^2$ be a directed BLF. 
We use the same notations as we use in the argument before Definition \ref{defn:directedBLF}. 
We take a disk neighborhood $D\subset S^2\setminus f(Z_f)$ of $p_n$ so that $f(\mathcal{C}_f)$ is contained in $D$.  
We put $r_i=\alpha\left(\frac{t_i+t_{i+1}}{2}\right)$ ($i=1,\ldots,m-1$) and $r_m=p_n$. 
Let $d_i\subset f^{-1}(r_i)$ be the vanishing cycle of $Z_i$ determined by $\alpha$. 
Once we fix an identification of $f^{-1}(r_m)$ with $\Sigma_{g_1}\amalg\cdots\amalg\Sigma_{g_k}$, 
we obtain an involution $\iota_i$ on $f^{-1}(r_i)$ induced by the hyperelliptic involution on $f^{-1}(r_m)$ 
since we can identify $f^{-1}(r_{i-1})\setminus \{\text{two points}\}$ with $f^{-1}(r_i)\setminus d_i$ by using $\alpha$. 
$f$ is said to be {\it hyperelliptic} if it satisfies the following conditions for a suitable identification of $f^{-1}(r_m)$ 
with $\Sigma_{g_1}\amalg\cdots\amalg\Sigma_{g_k}$: 

\begin{itemize}

\item the image of the monodromy representation of the Lefschetz fibration $\res{f}:f^{-1}(D) \rightarrow D$ is contained in the group $\mathcal{H}_g$; 

\item $d_i$ is preserved by the involution $\iota_i$ up to isotopy. 

\end{itemize}

In this paper, we will call a hyperelliptic SBLF HSBLF for short.

\begin{rem}

Every SBLF whose genus is less than or equal to $2$ is hyperelliptic 
since $\mathcal{H}_g=\mathcal{M}_g$ and all simple closed curves in $\Sigma_g$ is preserved by $\iota$ if $g\leq 2$.  

\end{rem}



\subsection{Handle decompositions}\label{subsec_handle}

Let $f:M\rightarrow S^2$ be a genus-$g$ SBLF 
and $M_h$ (resp. $M_r$ and $M_l$) the higher side (resp. the round cobordism and the lower side) of $f$. 
Then $\res{f}:M_h\rightarrow D^2$ is a Lefschetz fibration over the disk. 
We choose $y_0\in D^2$ and $\alpha_1,\ldots,\alpha_n\subset D^2$ as in subsection \ref{subsec_monodromy}. 
Let $D\subset\Int{D^2}\setminus \mathcal{C}_f$ be a disk whose boundary intersects each path $\alpha_i$ at one point transversely. 
Denote by $w_i\in \partial D$ the intersection between $\partial D$ and $\alpha_i$ 
and by $c_i\subset f^{-1}(w_i)$ a vanishing cycle of the Lefschetz singularity in the fiber $f^{-1}(y_i)$.  

\begin{thm}[Kas \cite{Kas}]

$M_h$ is obtained by attaching $n$ $2$-handles to $f^{-1}(D)\cong D\times \Sigma_g$ 
whose attaching circles are $c_1,\ldots,c_n$
and framings of these handles are $-1$ relative to the framing along the fiber. 

\end{thm}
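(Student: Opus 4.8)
The plan is to build up $M_h=f^{-1}(D^2)$ from the trivial piece $f^{-1}(D)\cong D\times\Sigma_g$ by analyzing each Lefschetz critical value separately, using the holomorphic local model to recognize each singular fiber as the trace of a single $2$-handle attachment. First I would isolate the critical values: since $D$ contains no element of $f(\mathcal{C}_f)$, the restriction $\res{f}:f^{-1}(D)\to D$ is a trivial surface bundle, so $f^{-1}(D)\cong D\times\Sigma_g$. The complement $D^2\setminus\Int{D}$ is an annular region carrying the critical values $y_1,\ldots,y_n$, and I would thicken each path $\alpha_i$ together with a small disk $\Delta_i\ni y_i$ into a ``lollipop'' neighborhood $N_i$, chosen pairwise disjoint and meeting $\partial D$ only along a collar of $w_i$. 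Over $N_i\setminus\Delta_i$ the fibration is again a product, so the only nontrivial contribution comes from $f^{-1}(\Delta_i)$.

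The core local computation uses the Lefschetz model $(z_1,z_2)\mapsto z_1z_2$ on a ball $B\subset\mathbb{C}^2$. I would identify $f^{-1}(\Delta_i)\cap B$ with this model, so that the central fiber $\{z_1z_2=0\}$ is nodal, a nearby regular fiber $\{z_1z_2=\xi\}$ is an annulus, and the vanishing cycle $c_i$ is the circle $\{z_2=\bar{z_1},\ |z_1|=\sqrt{t}\,\}$ in the fiber over a real value $\xi=t>0$, which shrinks to the node as $t\to0$. The union of these circles over $t\in[0,1]$, capped at the node, is the Lefschetz thimble $\{(z_1,\bar{z_1}):|z_1|\le1\}$, an embedded $2$-disk with boundary $c_i$. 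I would then argue that passing from $f^{-1}(\partial\Delta_i)$ across $\Delta_i$ is exactly the effect of attaching a $4$-dimensional $2$-handle whose core is the thimble and whose attaching circle is $c_i$, transported into the regular fiber over $w_i$ along $\alpha_i$. This is consistent with the fact that the monodromy of $\res{f}$ about $y_i$ is the right-handed Dehn twist $t_{c_i}$, recorded in the discussion of Hurwitz systems in \S\ref{subsec_monodromy}, since the boundary monodromy of a handle-trace in the local model is precisely such a twist.

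The main obstacle, and the step requiring genuine care, is the framing assertion. I would compute the attaching framing of the $2$-handle by comparing two trivializations of the normal bundle of $c_i$ inside the $3$-manifold $f^{-1}(\partial\Delta_i)$: the \emph{fiber framing}, obtained by pushing $c_i$ off to a parallel copy lying in the same regular fiber $\Sigma_g$, and the \emph{handle framing}, the trivialization that extends across the thimble. Tracking a normal vector to $c_i$ along the thimble against a fiber-parallel normal vector in the holomorphic model shows that the two framings differ by exactly one full negative twist; the sign is dictated by the complex orientation of $B$, which is precisely what distinguishes a Lefschetz (right-handed) critical point from its mirror. Thus the handle is attached with framing $-1$ relative to the framing induced by the fiber, as claimed.

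Finally I would assemble the pieces. Because the neighborhoods $N_i$ are disjoint and the paths $\alpha_i$ are ordered around $y_0$, the $n$ handle attachments can be performed independently along the disjoint attaching circles $c_1,\ldots,c_n$ sitting in parallel copies of $\Sigma_g$ over the distinct points $w_i\in\partial D$. Hence $M_h=f^{-1}(D\cup N_1\cup\cdots\cup N_n)$ is obtained from $f^{-1}(D)\cong D\times\Sigma_g$ by attaching $n$ two-handles along $c_1,\ldots,c_n$, each with framing $-1$ relative to the framing along the fiber, which is the assertion of the theorem.
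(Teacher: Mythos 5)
The paper offers no proof of this statement at all: it is quoted as a theorem of Kas with the citation \cite{Kas}, so there is no internal argument to compare yours against. Your proposal reconstructs the standard proof of Kas's theorem (essentially the one in Gompf--Stipsicz, \S 8.2), and its structure is correct: trivialize over $D$ since it misses $f(\mathcal{C}_f)$, isolate each critical value in a disjoint lollipop neighborhood built from $\alpha_i$ and a small disk $\Delta_i$, use the local model $(z_1,z_2)\mapsto z_1z_2$ to identify the Lefschetz thimble $\{(z,\bar z):|z|\le 1\}$ as the core of a $4$-dimensional $2$-handle attached along the vanishing cycle, and assemble the disjoint attachments over the points $w_i\in\partial D$. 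The one step you assert rather than carry out is precisely the step that carries all the content of the theorem: the framing. Saying that ``tracking a normal vector to $c_i$ along the thimble against a fiber-parallel normal vector shows the two framings differ by one full negative twist'' names the right comparison --- the push-off of $c_i$ inside the regular fiber versus the push-off determined by the attaching region $\partial D^2\times D^2$ of the model handle --- but it is not yet a computation. To close the argument you should parametrize the vanishing cycle in the fiber $\{z_1z_2=t\}$, write down both normal vector fields explicitly in the coordinates of the local model, and verify that the corresponding parallel copies of $c_i$ have linking number differing by $-1$, noting where the complex orientation forces the sign (this is what distinguishes the conclusion from $+1$, which would hold for an anti-holomorphic critical point). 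With that computation supplied your proof is complete and correct; as written, the framing claim is a true statement supported by the correct strategy but not yet by a verification.
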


We call $R^\lambda=[0,1]\times D^\lambda\times D^{3-\lambda}/((1,x_1,x_2,x_3)\sim(0,\pm x_1,x_2,\pm x_3))$ 
a ($4$-dimensional) {\it round $\lambda$-handle} ($\lambda=1,2$) 
and $X^4\cup_\varphi R^\lambda$ a $4$-manifold obtained by attaching a round $\lambda$-handle to the $4$-manifold $X^4$, 
where $\varphi: [0,1]\times \partial D^\lambda\times D^{3-\lambda}/\sim\rightarrow \partial X$ is an embedding. 
A round handle $R^\lambda$ is said to be {\it untwisted} if the sign in the equivalence relation is plus 
and {\it twisted} otherwise. 

\begin{thm}[Baykur \cite{Ba}]

$M_h\cup M_r$ is obtained by attaching a round $2$-handle to $M_h$. 
Moreover, a circle $\{t\}\times \partial D^2\times \{0\}$ in the attaching region of $R^2$ 
is attached along a vanishing cycle of indefinite fold singularities of $f$. 

\end{thm}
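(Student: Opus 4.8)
The plan is to read off the round cobordism $M_r=\hat f^{-1}(\nu\hat f(Z_f))$ directly from the local normal form of the indefinite fold and to recognize the passage across the fold circle as a Morse--Bott, hence round-handle, attachment. Write the base annulus $\nu\hat f(Z_f)$ as $S^1\times[-1,1]$ with the fold image $\hat f(Z_f)$ at $S^1\times\{0\}$, the higher side meeting it along $S^1\times\{+1\}$ and the lower side along $S^1\times\{-1\}$, and let $y_2\in[-1,1]$ be the coordinate transverse to the fold. On $M_r$ I would use the function $F=-y_2\circ \hat f$, which attains its minimum on $\partial M_h$ and increases into $M_r$.

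First I would pin down the local picture. By the second normal form in Definition \ref{definitionBLF} the fold is modeled by $(t,x_1,x_2,x_3)\mapsto(t,x_1^2+x_2^2-x_3^2)$, so on each transverse slice $F=-(x_1^2+x_2^2-x_3^2)=x_3^2-x_1^2-x_2^2$. Hence $F$ is Morse--Bott on $M_r$ with critical manifold exactly the fold circle $Z_{\hat f}=\{x=0\}$, and its Hessian in the three normal directions is $\operatorname{diag}(-2,-2,2)$, of index $2$. A Morse--Bott critical circle whose normal Hessian has index $\lambda$ contributes precisely a round $\lambda$-handle $R^\lambda=[0,1]\times D^\lambda\times D^{3-\lambda}/\!\sim$, the descending $\lambda$-plane matching the core $[0,1]\times D^\lambda\times\{0\}$ and the ascending $(3-\lambda)$-plane matching the cocore. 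Thus crossing the fold away from $M_h$ attaches a round $2$-handle, giving $M_h\cup M_r$ as claimed.

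Next I would identify the attaching circle. At a fixed point $s$ of the fold circle the descending manifold of $F$ is the plane $\{x_3=0\}$, whose trace on the higher-side level $\{F=-\varepsilon\}=\{x_1^2+x_2^2-x_3^2=\varepsilon\}$ is the circle $\{x_3=0,\ x_1^2+x_2^2=\varepsilon\}$. On the higher side the local fiber $\{x_1^2+x_2^2-x_3^2=\varepsilon\}$ is a cylinder, and this circle is exactly its core, i.e. the simple closed curve that is surgered away as one passes to the lower side---precisely the vanishing cycle $c$ of the indefinite fold defined in Subsection \ref{subsec_monodromy}. Under the identification $R^2=[0,1]\times D^2\times D^1/\!\sim$ this descending circle is the slice $\{t\}\times\partial D^2\times\{0\}$ of the attaching region, which is the second assertion of the theorem.

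Finally, these slicewise $2$-handles must assemble into a \emph{single} round $2$-handle as $s$ runs over the fold circle, and this is the step I expect to be the main obstacle. One must check that the core circles $c$ in the fibers of the $\Sigma_g$-bundle $\partial M_h\to\partial D^2$ fit together into an embedded (possibly nonorientable) neighborhood of a torus, equivalently that the total monodromy of $\partial M_h$ preserves $c$ up to isotopy. This is guaranteed by Baykur's Lemma stated above, namely $\varrho_{\hat f}(a_1)\cdots\varrho_{\hat f}(a_n)\in\mathcal{M}_g(c)$; the sign in the gluing relation of $R^2$ (untwisted versus twisted) is then recorded by whether the monodromy preserves or reverses the orientation of $c$. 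Combining the local round-handle model with this global matching yields both statements of the theorem.
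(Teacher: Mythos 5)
The paper offers no proof of this statement---it is quoted directly from Baykur \cite{Ba}---so there is nothing internal to compare against; your reconstruction is correct and is essentially the standard argument (and the one in Baykur's paper): the coordinate transverse to the fold image pulls back to a Morse--Bott function on $M_r$ whose only critical set is the fold circle with normal index $2$, crossing it attaches a round $2$-handle, the descending circle in each slice is the core of the local cylinder fiber and hence the vanishing cycle, and the slicewise handles close up into a single untwisted or twisted $R^2$ according to whether the boundary monodromy preserves or reverses the orientation of $c$. The only point worth making explicit is that your appeal to the lemma $\varrho_{\hat f}(a_1)\cdots\varrho_{\hat f}(a_n)\in\mathcal{M}_g(c)$ is not circular: that the monodromy of $\partial M_h$ preserves $c$ up to isotopy follows directly from the local normal form, since the core circles of the local cylinders form a sub-bundle of $\partial M_h\to\partial D^2$ over the fold image, independently of the round-handle decomposition one is trying to establish.
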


We remark that the isotopy class of the attaching map $\varphi:[0,1]\times \partial D^2 \times D^1/\sim\rightarrow \partial M_h$ is uniquely determined 
by a vanishing cycle of indefinite fold of $f$ if the genus of $f$ is greater than or equal to $2$. 
In particular, the total space of $f$ is uniquely determined by a vanishing cycle of indefinite fold of $f$ and ones of Lefschetz singularities of $f$. 
if the genus of $f$ is greater than or equal to $3$. 
However, there exist infinitely many SBLFs of genus $g\leq 2$ such that they have the same vanishing cycles but the total spaces of them are mutually distinct (see \cite{BK} or \cite{H}). 

\par

Round $2$-handle attachment is given by $2$-handle attachment followed by $3$-handle attachment (cf. \cite{Ba}). 
So we obtain a handle decomposition of $M_h\cup M_r$ by the above theorems. 
Since $M_l$ contains no singularities of $f$, 
the map $\res{f}:M_l\rightarrow D^2$ is the trivial $\Sigma_{g-1}$-bundle. 
In particular, $M_l\cong D^2\times\Sigma_{g-1}$ 
and we obtain a handle decomposition of $M=M_h\cup M_r\cup M_l$. 
Moreover, we can draw a Kirby diagram of $M$ by the decomposition 
(For more details on this decomposition and corresponding diagram, see \cite{Ba}). 

\par

We also obtain a handle decomposition of the total space of a directed BLF $f:M\rightarrow S^2$ by the same argument as above. 
Indeed, we can decompose $M$ into $D^2\times (\Sigma_{g_1}\amalg\cdots\amalg\Sigma_{g_m})$, 
$n_1$ $2$-handles, $n_2$ round $2$-handles and $D^2\times (\Sigma_{h_1}\amalg\cdots\amalg\Sigma_{h_m})$, 
where $n_1$ is the number of the Lefschetz singularities of $f$ and $n_2$ is the number of the components of the set of indefinite fold singularities of $f$.


\section{A subgroup $\mathcal{H}_g(c)$ of the hyperelliptic mapping class group which preserves a curve $c$}\label{section:preserve_c}

Let $c$ be an essential simple closed curve in the surface $\Sigma_g$ 
which is preserved by the involution $\iota\in \Diff_+\Sigma_g$ as a set. 
Let $\mathcal{H}_g(c)$ denote the subgroup of the hyperelliptic mapping class group defined by $\mathcal{H}_g(c):=\mathcal{H}_g\cap \mathcal{M}_g(c)$. 
As introduced in Theorem \ref{thm:hypMCG}, 
the hyperelliptic mapping class group $\mathcal{H}_g$ is isomorphic to the group consisting of the path-connected components of $C(\iota)$. 
Hence, the group $\mathcal{H}_g(c)$ consists of the mapping classes 
which can be represented by both of elements in the centralizer $C(\iota)$ and elements in $\Diff_+(\Sigma_g,c)$. 
Let $\mathcal{H}_g^s(c)$ denote the subgroup of $\pi_0C(\iota)$ defined by $\mathcal{H}_g^s(c):=\{[T]\in \pi_0C(\iota)\,|\, T(c)=c\}$. 
In this section, we will prove the following lemma.

\begin{lem}\label{lemma:isomH_g(c)}
Let $g\ge2$. The natural isomorphism $\pi_0C(\iota)\to \mathcal{H}_g$ in Theorem \ref{thm:hypMCG} restricts to an isomorphism between the groups $\mathcal{H}_g^s(c)$ and $\mathcal{H}_g(c)$. 
\end{lem}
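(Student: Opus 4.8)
The plan is to prove that the natural injection $\pi_0 C(\iota)\to\mathcal{M}_g$ of Theorem \ref{thm:hypMCG} carries the subgroup $\mathcal{H}_g^s(c)$ \emph{onto} $\mathcal{H}_g(c)$. Since injectivity is already supplied by Theorem \ref{thm:hypMCG}, the restriction is automatically injective; the entire content of the lemma lies in showing that the restricted map has image exactly $\mathcal{H}_g(c)$, i.e.\ that it is well-defined into $\mathcal{H}_g(c)$ and surjective onto it. First I would settle the easy inclusion. If $[T]\in\mathcal{H}_g^s(c)$, then $T\in C(\iota)$ with $T(c)=c$, so the image mapping class lies in $\mathcal{H}_g$ (by definition of $\mathcal{H}_g$ as the image of $\pi_0 C(\iota)$) and also in $\mathcal{M}_g(c)$ (since $T$ literally preserves $c$). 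Hence the image lands in $\mathcal{H}_g\cap\mathcal{M}_g(c)=\mathcal{H}_g(c)$, giving a well-defined injective homomorphism $\mathcal{H}_g^s(c)\to\mathcal{H}_g(c)$.

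The real work is surjectivity. Take an arbitrary class $\psi\in\mathcal{H}_g(c)=\mathcal{H}_g\cap\mathcal{M}_g(c)$. Because $\psi\in\mathcal{M}_g(c)$, there is a representative $\varphi\in\Diff_+(\Sigma_g,c)$ with $\varphi(c)=c$. Because $\psi\in\mathcal{H}_g$, the same mapping class is represented by some $S\in C(\iota)$, i.e.\ $S\iota=\iota S$, but \emph{a priori} $S$ need not preserve $c$ and $\varphi$ need not commute with $\iota$. What I want is a single representative that does both. The natural strategy is to start from the symmetric representative $S$ and correct it by an isotopy: since $S$ and $\varphi$ are isotopic and $\varphi(c)=c$, the curve $S(c)$ is isotopic to $c$. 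I would then aim to find an isotopy that moves $S(c)$ back to $c$ \emph{through $\iota$-equivariant diffeomorphisms}, i.e.\ an isotopy supported among elements of $C(\iota)$, so that composing $S$ with the time-one map of this equivariant isotopy produces a representative $T\in C(\iota)$ with $T(c)=c$, whence $[T]\in\mathcal{H}_g^s(c)$ maps to $\psi$.

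The main obstacle, and the step that needs genuine care, is producing that \emph{equivariant} isotopy carrying $S(c)$ to $c$. An ordinary isotopy exists because $S(c)$ and $c$ are isotopic simple closed curves, but one must upgrade it to one compatible with $\iota$. The key structural input is that $c$ is $\iota$-invariant and $S$ commutes with $\iota$, so $S(c)$ is also $\iota$-invariant; thus both endpoints of the desired isotopy are symmetric curves, which is precisely the situation in which an equivariant ``change of coordinates'' principle can be invoked. I would establish this using the Birman--Hilden correspondence already cited in Theorem \ref{thm:hypMCG}: passing to the quotient orbifold $\Sigma_g/\iota$ (a sphere with $2g+2$ marked points), the $\iota$-invariant curves $c$ and $S(c)$ descend to arcs/curves in the quotient, an isotopy downstairs lifts to an $\iota$-equivariant isotopy upstairs, and the lifted correction stays inside $C(\iota)$. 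Carrying out this descent-and-lift carefully — tracking whether $c$ is of the nonseparating ``arc'' type or the invariant ``separating'' type under $\iota$, and checking the lift lands in the identity component so that it does not alter the mapping class — is where the argument's substance sits; the remaining verifications that $T\in C(\iota)$, that $T(c)=c$, and that $[T]\mapsto\psi$ are then immediate.
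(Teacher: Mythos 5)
Your reduction agrees with the paper's: injectivity is inherited from Theorem \ref{thm:hypMCG}, the inclusion of the image of $\mathcal{H}_g^s(c)$ into $\mathcal{H}_g(c)$ is formal, and the whole content is to replace a symmetric representative $S\in C(\iota)$ of a class in $\mathcal{H}_g(c)$ by one that also preserves $c$, i.e.\ to produce a \emph{symmetric} isotopy carrying $S(c)$ back to $c$. Where you diverge is in how that isotopy is built. The paper stays upstairs: it puts $c$ and $S(c)$ in transverse position, takes an innermost bigon $\Delta$, shows via Lemmas \ref{lemma:innermost}, \ref{lemma:x1x2} and \ref{lem:branch-scc} that $\Delta\cap\iota(\Delta)$ consists of $0$, $1$ or $2$ intersection points, and writes down explicit symmetric isotopies collapsing $\Delta\cup\iota(\Delta)$ in each case (with a separate annulus argument when $c$ is separating and the curves are disjoint). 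You propose instead to descend to the quotient sphere with $2g+2$ marked points, isotope there, and lift.

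That route is legitimate in principle, but as written it has a gap at precisely its load-bearing step: you invoke the fact that ``an isotopy downstairs lifts to an $\iota$-equivariant isotopy upstairs'' without establishing that an isotopy downstairs \emph{exists}. The hypothesis only gives a non-equivariant isotopy from $S(c)$ to $c$ in $\Sigma_g$, and such an isotopy does not descend to the quotient; the assertion that the images of $c$ and $S(c)$ are isotopic in the marked sphere (rel endpoints, in the complement of the other marked points) is essentially equivalent to the lemma you are trying to prove. To close this you need an independent input — for instance, pull back a hyperbolic orbifold metric from $\Sigma_g/\iota$ to an $\iota$-invariant metric on $\Sigma_g$ and use uniqueness of geodesic representatives, or run the paper's bigon induction after all. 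You also need the analogue of Lemma \ref{lem:branch-scc}: when $c$ is nonseparating, $c$ and $S(c)$ meet the fixed-point set $\Sigma_g^\iota$ in the \emph{same} two points (and in no points when $c$ is separating), since otherwise the descended arcs would not even share endpoints and no isotopy rel marked points could exist. That is not a routine bookkeeping detail; the paper proves it by pushing $t_c$ and $t_{S(c)}$ down to the mapping class group of the $(2g+2)$-marked sphere and comparing the induced permutations of the branch points. Until these two points are supplied, the surjectivity argument is incomplete.
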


To prove the lemma, It is enough to show that the homomorphism maps $\mathcal{H}_g^s(c)$ onto $\mathcal{H}_g(c)$. 
Let $[T]$ be a mapping class in $\mathcal{H}_g(c)$. 
We can choose a representative $T:\Sigma_g\to \Sigma_g$ in the centralizer $C(\iota)$. 
Since it is isotopic to some diffeomorphism on $\Sigma_g$ which preserves the curve $c$, the curve $T(c)$ is isotopic to $c$.

We call an isotopy $L_0:\Sigma_g\times [0,1]\to \Sigma_g$ is symmetric if and only if $L_0(*,t)\in C(\iota)$ for any $t\in [0,1]$. 
In the following, we will construct a symmetric isotopy $L:\Sigma_g\times [0,1]\to \Sigma_g$ satisfying
\[
L(*,0)=T,\text{ and } L(c,1)=c\subset \Sigma_g.
\]
It indicates that $L(*,1)$ represents an element in $\mathcal{H}_g^s(c)$, and  $[L(*,1)]=[T]\in\pi_0C(\iota)$. Hence, we see that the homomorphism $\mathcal{H}_g^s(c)\to \mathcal{H}_g(c)$ is surjective.

To construct the symmetric isotopy $L:\Sigma_g\times [0,1]\to \Sigma_g$, we need a proposition, so called the bigon criterion.

\begin{prop}[Farb-Margalit Proposition 1.7 \cite{Farb-Margalit}]
Let $S$ be a compact surface. The geometric intersection number of two transverse simple closed curves in $S$ is minimal if and only if they do not form a bigon.
\end{prop}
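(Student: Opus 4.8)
The plan is to prove the two implications of the equivalence separately, handling the easy direction directly and the hard one by contraposition. Throughout write $\alpha,\beta$ for the two transverse simple closed curves and $|\alpha\cap\beta|$ for their number of intersection points; saying the geometric intersection number ``is minimal'' means that $\alpha,\beta$ are in \emph{minimal position}, i.e. $|\alpha\cap\beta|=i(\alpha,\beta)$, and a \emph{bigon} is an embedded disk in $S$ whose boundary is the union of one subarc of $\alpha$ and one subarc of $\beta$ meeting at exactly two points. First I would dispose of inessential curves: if, say, $\alpha$ bounds a disk $D$ and meets $\beta$, then an arc of $\beta\cap D$ that is innermost in $D$ cuts off a subdisk bounded by that arc together with a subarc of $\alpha$, which is already a bigon. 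Hence any no-bigon configuration involving a nullhomotopic curve is disjoint, and the statement is trivial there; so for the substantive argument I may assume both curves are essential and that $S$ has genus at least one.

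Next I would prove that a bigon forces the position to be non-minimal. Given any bigon, the arcs of $\alpha\cup\beta$ lying inside it let me pass to an \emph{innermost} bigon $B$, one whose open interior is disjoint from $\alpha\cup\beta$. Pushing the $\alpha$-side of $\partial B$ across $B$ and slightly past the $\beta$-side defines an ambient isotopy supported near $B$; it removes exactly the two corner intersection points of $B$ and creates no new ones, producing a curve isotopic to $\alpha$ meeting $\beta$ in $|\alpha\cap\beta|-2$ points. This proves that minimal position implies the absence of bigons.

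For the converse I would argue by contraposition, showing that no bigon implies minimal position, via the universal cover. Fix a complete hyperbolic metric on $S$ (a flat metric when $S$ is a torus), so that $\tilde S\cong\mathbb{R}^2$ compactifies to a disk with boundary circle $\partial\tilde S\cong S^1$ carrying an action of the deck group. Lifts of $\alpha$, and of $\beta$, are disjoint properly embedded lines, each with two distinct endpoints on $\partial\tilde S$. The central claim, the \emph{Main Lemma}, is that if $\alpha,\beta$ bound no bigon then any lift $\tilde\alpha$ meets any lift $\tilde\beta$ in at most one point. Granting it, I would finish by the linking count: two lifts must cross whenever their endpoint pairs are linked on $S^1$, and since isotopy leaves the endpoints at infinity unchanged, the number of deck-orbits of linked $(\tilde\alpha,\tilde\beta)$ pairs is an isotopy invariant equal to $i(\alpha,\beta)$. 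By the Main Lemma each crossing pair contributes exactly one point, and these account for all of $\alpha\cap\beta$, so $|\alpha\cap\beta|=i(\alpha,\beta)$ and the curves are in minimal position.

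The crux, and the step I expect to be the main obstacle, is the Main Lemma. If some $\tilde\alpha$ and $\tilde\beta$ met in two points, the subarcs between consecutive intersection points would co-bound a disk region in $\tilde S$, from which I extract an innermost such disk $\tilde B$, a bigon upstairs. The delicate point is that the covering projection restricts to an \emph{embedding} on $\tilde B$, which would then produce a genuine bigon in $S$ and the desired contradiction. To establish injectivity I would show that a nontrivial deck transformation $h$ identifying two points of $\tilde B$ must carry the lines $\tilde\alpha,\tilde\beta$ to lifts which, being lifts of embedded curves, are either equal to or disjoint from $\tilde\alpha,\tilde\beta$; analysing how $h\tilde B$ could then overlap $\tilde B$ under these constraints, together with the innermost choice of $\tilde B$, excludes any such identification. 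Carrying out this injectivity argument carefully, and separately treating the torus case where the deck group is abelian and lines are Euclidean straight lines, is where the real work lies.
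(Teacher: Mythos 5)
The paper does not actually prove this proposition: it is imported verbatim as Proposition 1.7 of Farb--Margalit and used as a black box in Section 3, so there is no internal proof to compare against. Your outline is, in essence, a faithful reconstruction of the proof in the cited reference itself: the easy direction by pushing across an innermost bigon to remove two intersection points, and the hard direction via the universal cover, endpoints of lifts on the circle at infinity, the linking criterion for crossings, and the key lemma that in the absence of bigons any two lifts meet at most once, established by showing an innermost bigon upstairs projects homeomorphically to a bigon downstairs. That is the right architecture, and the step you single out as the crux is indeed where Farb--Margalit spend their effort; the standard way to close it, consistent with your sketch, is the case analysis that a deck transformation $h$ carries $\tilde\alpha,\tilde\beta$ to lifts that are equal to or disjoint from them, forcing $h(\tilde B)=\tilde B$ if the images overlap, whereupon the Brouwer fixed point theorem contradicts freeness of the deck action. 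Two small points to tighten: your reduction ``both curves essential, genus at least one'' is off --- planar compact surfaces such as holed spheres carry essential curves; the correct dichotomy is $\chi(S)<0$ (hyperbolic metric, possibly with geodesic boundary) versus the torus and annulus (flat metric, straight-line lifts), and in the flat case distinct parallel lines share endpoints at infinity, so the linking argument needs the separate elementary treatment you allude to. Second, the identification of the linked-orbit count with $i(\alpha,\beta)$ should be run as a two-sided inequality: for \emph{any} transverse representatives each linked orbit pair forces at least one crossing, giving a lower bound for $i(\alpha,\beta)$, while the no-bigon hypothesis plus your Main Lemma gives exactly one crossing per linked pair and none otherwise, so $|\alpha\cap\beta|$ equals the lower bound. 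With those repairs your proposal is a correct proof, identical in strategy to the source the paper cites.
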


We may assume that the curves $c$ and $T(c)$ are transverse by changing the diffeomorphism $T$ in terms of some symmetric isotopy. Since $c$ and $T(c)$ are isotopic, the minimal intersection number of them is $0$. Hence, there exist bigons such that each of their boundaries is the union of an arc of $c$ and an arc of $T(c)$. Choose an innermost bigon $\Delta$ among them.

Let $\alpha$ be the arc $c\cap \partial\Delta$ and $\beta$ the arc $T(c)\cap \partial\Delta$, respectively. Since $\Delta$ is a bigon, the endpoints of them coincide. Denote them by $\{x_1,x_2\}\subset\partial\Delta$. 

\begin{lem}\label{lemma:innermost}
\[
\Int\Delta\cap(T(c)\cup c)=\emptyset
\]
\end{lem}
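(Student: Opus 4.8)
The plan is to prove Lemma~\ref{lemma:innermost} by invoking the defining property of an \emph{innermost} bigon. Recall that $\Delta$ was chosen to be innermost among all bigons whose boundary consists of one arc $\alpha\subset c$ and one arc $\beta\subset T(c)$. The statement to be proven is that the open disk $\Int\Delta$ meets neither $c$ nor $T(c)$; that is, the bigon is genuinely empty of both curves. I expect the argument to be essentially a contradiction argument: if some arc of $c$ or $T(c)$ entered $\Int\Delta$, we would produce a strictly smaller bigon contained in $\Delta$, contradicting innermostness.

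First I would set up the two cases. Suppose for contradiction that $\Int\Delta\cap(T(c)\cup c)\neq\emptyset$. Since $c$ and $T(c)$ are transverse simple closed curves, any component of $c\cap\Int\Delta$ (respectively $T(c)\cap\Int\Delta$) is a properly embedded arc or a circle in $\Delta$; circles can be ruled out or else handled by passing to an innermost one, but the relevant case is an arc. Consider first an arc $\gamma\subset T(c)\cap\Int\Delta$ with endpoints on $\partial\Delta$. Because $T(c)$ is transverse to $c=\alpha\cup(\text{rest})$ and $\beta\subset T(c)$ already forms part of $\partial\Delta$, the arc $\gamma$ must have both endpoints on $\alpha$ (an arc of $T(c)$ cannot cross $\beta$, as $\beta$ is itself part of the simple closed curve $T(c)$). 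Then $\gamma$ together with a subarc $\alpha'\subset\alpha$ bounds a disk $\Delta'\subsetneq\Delta$; this $\Delta'$ is a bigon between an arc of $c$ and an arc of $T(c)$, and it is strictly smaller than $\Delta$, contradicting the choice of $\Delta$ as innermost. The symmetric argument, with the roles of $c$ and $T(c)$ interchanged, handles an arc $\gamma\subset c\cap\Int\Delta$: such an arc has both endpoints on $\beta$, and cuts off a smaller bigon, again a contradiction.

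The main obstacle I anticipate is the bookkeeping of the cases and the justification that an intruding arc always has both its endpoints on the ``opposite'' side of the boundary, which rests on the fact that neither $c$ nor $T(c)$ can self-intersect. One must be careful that the subarc cut off is indeed a \emph{bigon} (bounded by exactly one arc of each curve) rather than some more complicated region; this follows by choosing $\gamma$ to be an \emph{innermost} intruding arc, i.e.\ one cutting off a subdisk of $\Delta$ whose interior is disjoint from the curve containing $\gamma$. With that refinement the cut-off region is automatically a bigon. I would therefore phrase the contradiction in terms of an innermost intersection arc inside $\Delta$, which gives a bigon strictly contained in $\Delta$ and contradicts minimality. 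This establishes $\Int\Delta\cap(T(c)\cup c)=\emptyset$, completing the proof of the lemma.
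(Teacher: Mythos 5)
Your argument is correct and is essentially the paper's own proof: the paper likewise observes that an arc of $c$ (or $T(c)$) entering $\Int\Delta$ would form a bigon with $\beta$ (or $\alpha$) strictly contained in $\Delta$, contradicting the choice of $\Delta$ as innermost. Your extra care about circles and about taking an innermost intruding arc only makes explicit what the paper leaves implicit.
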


\begin{proof}
If the set $\Int\Delta\cap c$ is non-empty, there exists an arc of $c$ in $\Delta$ which forms an bigon with the arc $\beta$. Since the bigon $\Delta$ is innermost, it is a contradition. We can also show that $\Int\Delta\cap T(c)=\emptyset$ in the same way.
\end{proof}

Note that the bigon $\iota(\Delta)$ is also innermost. By Lemma \ref{lemma:innermost}, we have $\Delta\cap\iota(\Delta)=\partial\Delta\cap\partial\iota(\Delta)$.

\begin{lem}\label{lemma:x1x2}
\[
\partial\Delta\cap\partial\iota(\Delta)\subset\{x_1,x_2\}.
\]
\end{lem}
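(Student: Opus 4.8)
The plan is to argue by contradiction: assume some $p\in\partial\Delta\cap\partial\iota(\Delta)$ lies outside $\{x_1,x_2\}$ and derive a contradiction. Throughout I work under the standing assumption $\Delta\ne\iota(\Delta)$, which is precisely what the identity $\Delta\cap\iota(\Delta)=\partial\Delta\cap\partial\iota(\Delta)$ already presupposes; together with Lemma \ref{lemma:innermost} this also yields $\Int\Delta\cap\Int\iota(\Delta)=\emptyset$. First I record that $\iota$ preserves both relevant curves: $\iota(c)=c$ by hypothesis, and $\iota(T(c))=T(\iota(c))=T(c)$ because $T\in C(\iota)$. Hence $\partial\iota(\Delta)=\iota(\alpha)\cup\iota(\beta)$ with $\iota(\alpha)\subset c$ and $\iota(\beta)\subset T(c)$, and the two vertices of $\iota(\Delta)$ are $\iota(x_1),\iota(x_2)$.

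Writing $\partial\Delta=\alpha\cup\beta$, any $p\notin\{x_1,x_2\}$ lies in the interior of exactly one of $\alpha,\beta$; by the symmetry exchanging $c$ and $T(c)$ I may assume $p\in\Int\beta\subset T(c)$. The easy configurations are disposed of by transversality against Lemma \ref{lemma:innermost}: if $p\in\iota(\alpha)\subset c$, or if $p$ is a vertex $\iota(x_j)$ of $\iota(\Delta)$, then in either case $p\in c\cap T(c)$, so $c$ meets $T(c)$ transversally at $p$; since $\beta$ is a boundary arc of $\Delta$, the curve $c$ is then forced into $\Int\Delta$, contradicting $\Int\Delta\cap c=\emptyset$. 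Thus the only surviving possibility is $p\in\Int\iota(\beta)$, so that near $p$ both $\partial\Delta$ and $\partial\iota(\Delta)$ run along $T(c)$.

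For this remaining case I first observe that $\Delta$ and $\iota(\Delta)$ must lie on opposite sides of $T(c)$ near $p$, since lying on the same side would violate $\Int\Delta\cap\Int\iota(\Delta)=\emptyset$. I then examine the maximal arc $\gamma\subset T(c)$ of overlap of $\beta$ and $\iota(\beta)$ containing $p$. At an endpoint of $\gamma$ one of $\beta,\iota(\beta)$ terminates while the other continues; were that endpoint a vertex of only one of the two bigons, the same transversality argument as above would drive $c$ into the interior of the other bigon, again contradicting Lemma \ref{lemma:innermost}. Hence both endpoints of $\gamma$ are shared vertices, which forces $\beta=\iota(\beta)$ together with $\{x_1,x_2\}=\{\iota(x_1),\iota(x_2)\}$.

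Finally I close the argument using that $c$ is essential. Since $\iota(\alpha)$ is then an arc of $c$ with endpoints $x_1,x_2$, it equals either $\alpha$ or the complementary arc $\overline{c\setminus\alpha}$. In the first case $\partial\iota(\Delta)=\partial\Delta$; as a null-homotopic embedded circle bounds a unique disk on a surface of positive genus, this gives $\Delta=\iota(\Delta)$, against our assumption (and against the opposite-sides conclusion). In the second case, gluing $\Delta$ and $\iota(\Delta)$ along their common arc $\beta$ produces an embedded disk whose boundary is $\alpha\cup\overline{c\setminus\alpha}=c$, so $c$ bounds a disk, contradicting that $c$ is essential. This exhausts all cases. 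The crux is precisely this last surviving case: everything before it is a direct transversality-versus-innermost contradiction, whereas pinning down $\beta=\iota(\beta)$ from the overlap and then extracting from it a disk bounded by $c$ (so that essentialness can be invoked) is where the real work lies.
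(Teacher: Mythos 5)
Your proof is correct and follows essentially the same route as the paper's: both dispose of the ``transverse'' intersections via the innermost property and then reduce to the case where an arc of $\partial\Delta$ overlaps the corresponding arc of $\partial\iota(\Delta)$ inside $c$ or $T(c)$, force those two arcs to coincide, and contradict essentialness. The only difference is cosmetic: the paper runs the coincidence argument on the $c$-arcs and concludes that $T(c)=\beta\cup\iota(\beta)$ is a null-homotopic simple closed curve, whereas you run it on the $T(c)$-arcs and conclude that $c$ bounds the disk $\Delta\cup\iota(\Delta)$ --- mirror images of the same argument (with your explicit treatment of the degenerate sub-case $\partial\Delta=\partial\iota(\Delta)$ being a small point the paper leaves implicit).
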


\begin{proof}
Since $\partial \alpha=\partial \beta=\alpha\cap\beta=\{x_1,x_2\}$, 
it suffices to show that $\Int \alpha\cap \partial\iota(\Delta)=\Int \beta\cap \partial\iota(\Delta)=\emptyset$. 
Since $\alpha\cap T(c)=\{x_1,x_2\}$, we have $\Int\alpha\cap\iota(\beta)=\emptyset$. 
Next, we will show that $\Int\alpha\cap\Int\iota(\alpha)= \emptyset$. 
We assume $\Int\alpha\cap\Int\iota(\alpha)\ne \emptyset$. 
Since $c$ is simple and contains $\alpha$ and $\iota(\alpha)$, 
$\alpha$ and $\iota(\alpha)$ must coincide. 
In particular, we have $\partial\alpha = \partial\iota(\alpha)$. 
So $\beta\cup\iota(\beta)$ forms a simple closed curve, 
and this curve is null-homotopic 
because both of the arcs $\beta$ and $\iota(\beta)$ are homotopic to $\alpha=\iota(\alpha)$ relative to their boundaries. 
Since $T(c)$ is simple and contains $\beta$ and $\iota(\beta)$, 
$T(c)$ and $\beta\cup\iota(\beta)$ must coincide. 
This contradicts that $T(c)$ is essential. 
In the same way, we can show that $\Int \beta\cap \partial\iota(\Delta)=\emptyset$.
\end{proof}

Let $\Sigma_g^\iota$ denote the fixed point set of the involution $\iota$ on $\Sigma_g$.

\begin{lem}\label{lem:branch-scc}
If $c$ is non-separating, the set $c\cap\Sigma_g^\iota$ consists of $2$ points, and
\[
c\cap\Sigma_g^\iota=T(c)\cap\Sigma_g^\iota.
\]

If $c$ is separating, 
\[
c\cap\Sigma_g^\iota=T(c)\cap\Sigma_g^\iota=\emptyset.
\]
\end{lem}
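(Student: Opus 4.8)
The plan is to pass to the quotient of the hyperelliptic double covering. Write $\pi\colon\Sigma_g\to S^2=\Sigma_g/\iota$ for the branched double covering; its branch locus $B\subset S^2$ consists of $2g+2$ points, and $\pi$ restricts to a bijection $\Sigma_g^\iota\to B$. Since $\Sigma_g^\iota$ is finite while $c$ is a circle, $\iota|_c$ is a nontrivial involution of $S^1$, hence is conjugate either to the free rotation by $\pi$ (no fixed points) or to a reflection (exactly two fixed points). I would first record that in the reflection case $\pi(c)$ is an embedded arc in $S^2$ joining the two branch points that are the images of the two fixed points $\{p_1,p_2\}=c\cap\Sigma_g^\iota$, whereas in the rotation case $\pi(c)$ is an embedded circle disjoint from $B$ and $c\cap\Sigma_g^\iota=\emptyset$, since $c\cap\Sigma_g^\iota$ is exactly the fixed-point set of $\iota|_c$. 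Thus the cardinality assertion is reduced to deciding which of the two cases occurs according to whether $c$ separates.

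To make that decision I would use the homological behaviour of preimages under $\pi$. In the reflection case $c=\pi^{-1}(\gamma)$ for an arc $\gamma$ joining two branch points, and such a preimage represents the nonzero class corresponding to the endpoint pair in $H_1(\Sigma_g;\mathbb{Z}/2)$; hence $c$ is non-separating. In the rotation case $c=\pi^{-1}(\delta)$ for an embedded circle $\delta\subset S^2\setminus B$ bounding disks $D,D'$, with $D$ containing an odd number of branch points (oddness being forced by connectedness of $c$), so that $\pi^{-1}(D)$ and $\pi^{-1}(D')$ are connected branched covers meeting along $c$, and $c$ separates $\Sigma_g$. Contraposing these two implications gives exactly what we want: a separating $c$ must fall in the rotation case, so $c\cap\Sigma_g^\iota=\emptyset$, while a non-separating $c$ must fall in the reflection case, so $c\cap\Sigma_g^\iota$ consists of two points.

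For the identity $c\cap\Sigma_g^\iota=T(c)\cap\Sigma_g^\iota$ I would exploit that $T\in C(\iota)$ descends to a homeomorphism $\overline{T}\colon S^2\to S^2$ with $\pi\circ T=\overline{T}\circ\pi$, preserving $B$ and inducing a permutation $\sigma$ of $B$; in particular $T(\Sigma_g^\iota)=\Sigma_g^\iota$. The separating case is immediate, both sets being empty. In the non-separating case write $c\cap\Sigma_g^\iota=\{p_1,p_2\}$ and $\gamma=\pi(c)$, so that $\partial\gamma=\{\pi(p_1),\pi(p_2)\}$ and $\pi(T(c))=\overline{T}(\gamma)$ has boundary $\sigma(\partial\gamma)$. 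Under the classical identification of $H_1(\Sigma_g;\mathbb{Z}/2)$ with the group of even-cardinality subsets of $B$ under symmetric difference, taken modulo the subgroup $\{\emptyset,B\}$ — in which the preimage of an arc corresponds to its unordered pair of endpoints — the class of $c$ is the coset of $\partial\gamma$. Since $T(c)$ is isotopic to $c$ they are homologous, so $\sigma(\partial\gamma)$ and $\partial\gamma$ agree as cosets; that is, $\sigma(\partial\gamma)$ equals either $\partial\gamma$ or its complement $B\setminus\partial\gamma$. As $\sigma$ preserves cardinality and $|B\setminus\partial\gamma|=2g>2$ for $g\ge2$, only $\sigma(\partial\gamma)=\partial\gamma$ is possible, and pulling back by the bijection $\pi\colon\Sigma_g^\iota\to B$ yields $T(c)\cap\Sigma_g^\iota=\pi^{-1}(\sigma(\partial\gamma))=\pi^{-1}(\partial\gamma)=c\cap\Sigma_g^\iota$.

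The main obstacle is this last step: unlike the cardinality count, the coincidence of the two fixed-point pairs cannot be read off from a single invariant curve, because the isotopy from $c$ to $T(c)$ need not be $\iota$-equivariant and so does not descend to $S^2$. What saves the argument is that we never need to descend the isotopy — only the isotopy-invariant mod-$2$ homology class — together with the numerical input $2g>2$, which is precisely where the hypothesis $g\ge2$ enters and which forces the cardinality-preserving permutation $\sigma$ to fix the endpoint pair rather than exchanging it with its complement.
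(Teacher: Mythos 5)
Your proof is correct, but it reaches both conclusions by mechanisms that differ from the paper's. For the cardinality count the paper never descends to the quotient sphere: for non-separating $c$ it uses that $\iota$ acts by $-1$ on $H_1(\Sigma_g;\mathbb{Z})$, so $\iota$ reverses the orientation of $c$ and $\iota|_c$ is an orientation-reversing involution of a circle, hence has exactly two fixed points; for separating $c$ it observes that $\iota$ preserves each subsurface bounded by $c$, hence the orientation of $c$, so $\iota|_c$ is free. Your rotation/reflection dichotomy for $\iota|_c$, combined with the connectivity of the branched double covers of the two complementary disks, yields the same conclusions and has the virtue of making explicit why an invariant circle disjoint from the branch locus must separate. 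For the identity $c\cap\Sigma_g^\iota=T(c)\cap\Sigma_g^\iota$ the paper argues through the Birman--Hilden projection $\Psi:\mathcal{H}_g\to\mathcal{M}_0^{2g+2}$: since $c$ and $T(c)$ are isotopic, $t_c=t_{T(c)}$ in $\mathcal{M}_g$, so $\Psi([t_c])=\Psi([t_{T(c)}])$, and these classes induce the transpositions of $p(c\cap\Sigma_g^\iota)$ and $p(T(c)\cap\Sigma_g^\iota)$ respectively; a transposition determines the pair it moves, so the pairs coincide. You instead compare the mod-$2$ homology classes of $c$ and $T(c)$ under the classical identification of $H_1(\Sigma_g;\mathbb{Z}/2)$ with even-cardinality subsets of the branch set modulo $\{\emptyset,B\}$, and exclude the complementary subset by the cardinality estimate $2g>2$. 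Each route leans on a standard unproved fact (the paper on the homomorphism $\Psi$ and the image of a Dehn twist as a half-twist, you on the branch-point model of mod-$2$ homology); the paper's version of this step also works for $g=1$, whereas yours genuinely needs $g\ge2$ --- harmless here, since the whole section operates under that hypothesis.
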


\begin{proof}
Endow the curves $c$ and $T(c)$ with arbitrary orientations. 

First, consider the case when $c$ is a non-separating simple closed curve. In this case, the curve $T(c)$ is also non-separating. They represent nontrivial homology classes in $H_1(\Sigma_g;\mathbf{Z})$. Since the involution $\iota$ acts on $H_1(\Sigma_g;\mathbf{Z})$ by $-1$, it changes the orientations of $c$ and $T(c)$. Hence, both of the sets $c\cap\Sigma_g^\iota$ and $T(c)\cap\Sigma_g^\iota$ consist of 2 points.

We will show that $T(c)\cap\Sigma_g^\iota=c\cap\Sigma_g^\iota$.
Since $c$ and $T(c)$ are isotopic, the Dehn twists $t_c$ and $t_{T(c)}$ represent the same element in $\mathcal{H}_g$. The mapping classes $\Psi([t_c])$ and $\Psi([t_{T(c)}])$ in $\mathcal{M}_0^{2g+2}$ permute the branched points $p(c\cap\Sigma_g^\iota)$ and $p(T(c)\cap\Sigma_g^\iota)$, respectively. Hence, the sets $p(c\cap\Sigma_g^\iota)$ and $p(T(c)\cap\Sigma_g^\iota)$ coincide. It shows that $c\cap\Sigma_g^\iota=T(c)\cap\Sigma_g^\iota$.

Next, let $c$ be a separating simple closed curve. Since $\iota$ preserves the orientations of the subsurfaces bounded by $c$ or $T(c)$, it also preserves the orientation of $c$ and $T(c)$. In general, if an involution acts on a circle preserving its orientation, it does not have a fixed point. Hence, we have $c\cap\Sigma_g^\iota=T(c)\cap\Sigma_g^\iota=\emptyset$.
\end{proof}

\begin{proof}[Proof of Lemma \ref{lemma:isomH_g(c)}]
Let $c$ be a non-separating curve. By Lemma \ref{lem:branch-scc}, the geometric intersection number of $c$ and $T(c)$ is at least $2$. Hence, there is an innermost bigon $\Delta$. By Lemma \ref{lemma:x1x2}, the cardinality $\sharp(\Delta\cap\iota(\Delta))$ is equal to $0$, $1$, or $2$ as shown in Figure \ref{bigons}.

\begin{figure}[htbp]
\begin{center}
\includegraphics[width=110mm]{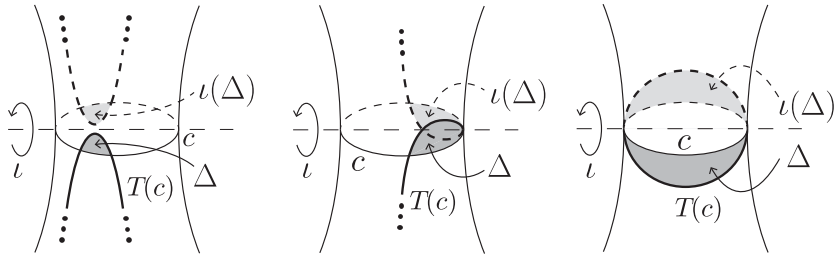}
\end{center}
\caption{Left: $\sharp(\Delta\cap\iota(\Delta))=0$. Center: $\sharp(\Delta\cap\iota(\Delta))=1$. Right: $\sharp(\Delta\cap\iota(\Delta))=2$. 
The bold curves describe the curves $T(c)$. }
\label{bigons}
\end{figure} 

Firstly, assume that $\sharp(\Delta\cap\iota(\Delta))=0$. In this case, there is a symmetric isotopy $L_1:\Sigma_g\times[0,1]\to \Sigma_g$ such that $L_1(*,0)$ is the identity, and $L_1(*,1)$ collapses the bigon $\Delta$ as in Figure \ref{isotopyL1}. Therefore, we can decrease the geometric intersection number of $c$ and $T(c)$ by $4$ by replacing the diffeomorphism $T$ by $L_1(*,1)T$.

\begin{figure}[htbp]
\begin{center}
\includegraphics[width=70mm]{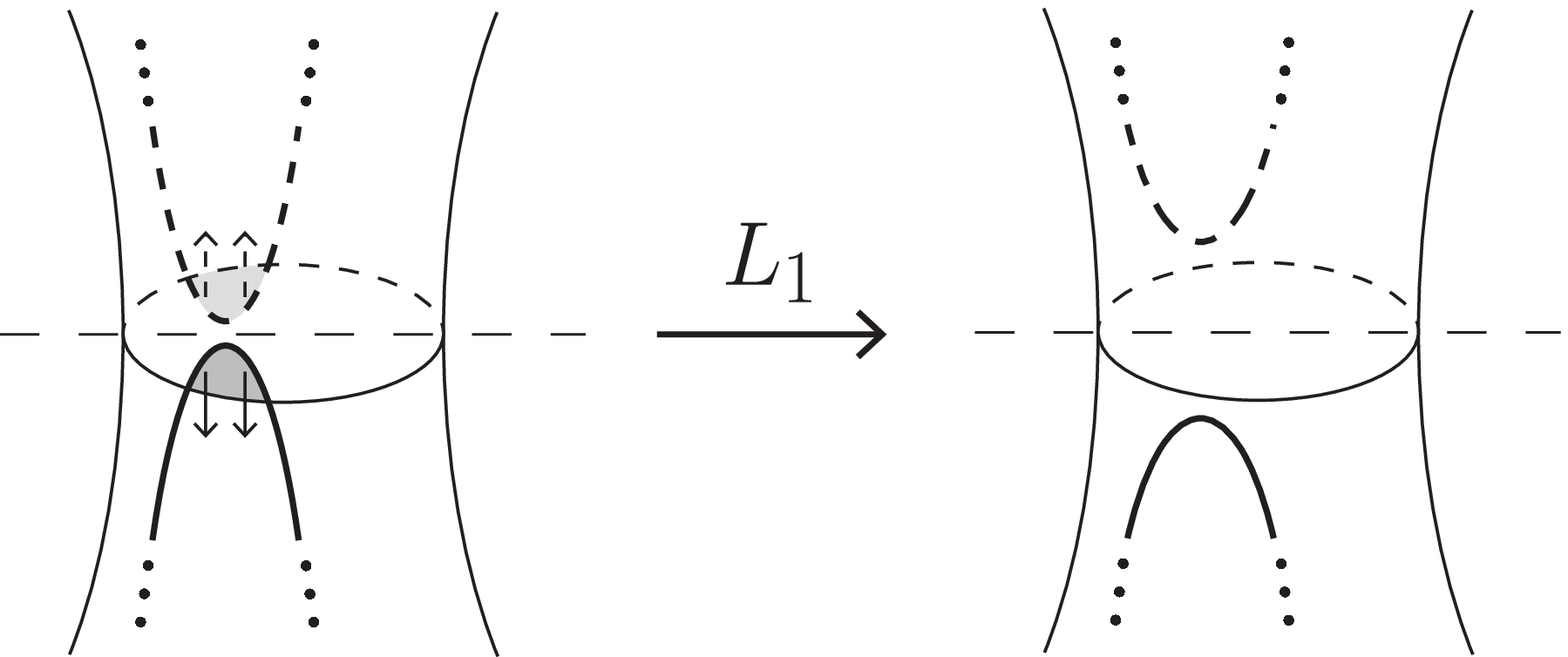}
\end{center}
\caption{}
\label{isotopyL1}
\end{figure}

Secondly, assume that $\sharp(\Delta\cap\iota(\Delta))=1$. In this case, we also have a symmetric isotopy $L_2:\Sigma_g\times[0,1]\to \Sigma_g$ which decreases the geometric intersection number by $2$ as in Figure \ref{isotopyL2}. Note that $\Delta\cap\iota(\Delta)$ is a branched point, and $L_2(*,t)$ fixes it for any $t\in[0,1]$.

\begin{figure}[htbp]
\begin{center}
\includegraphics[width=70mm]{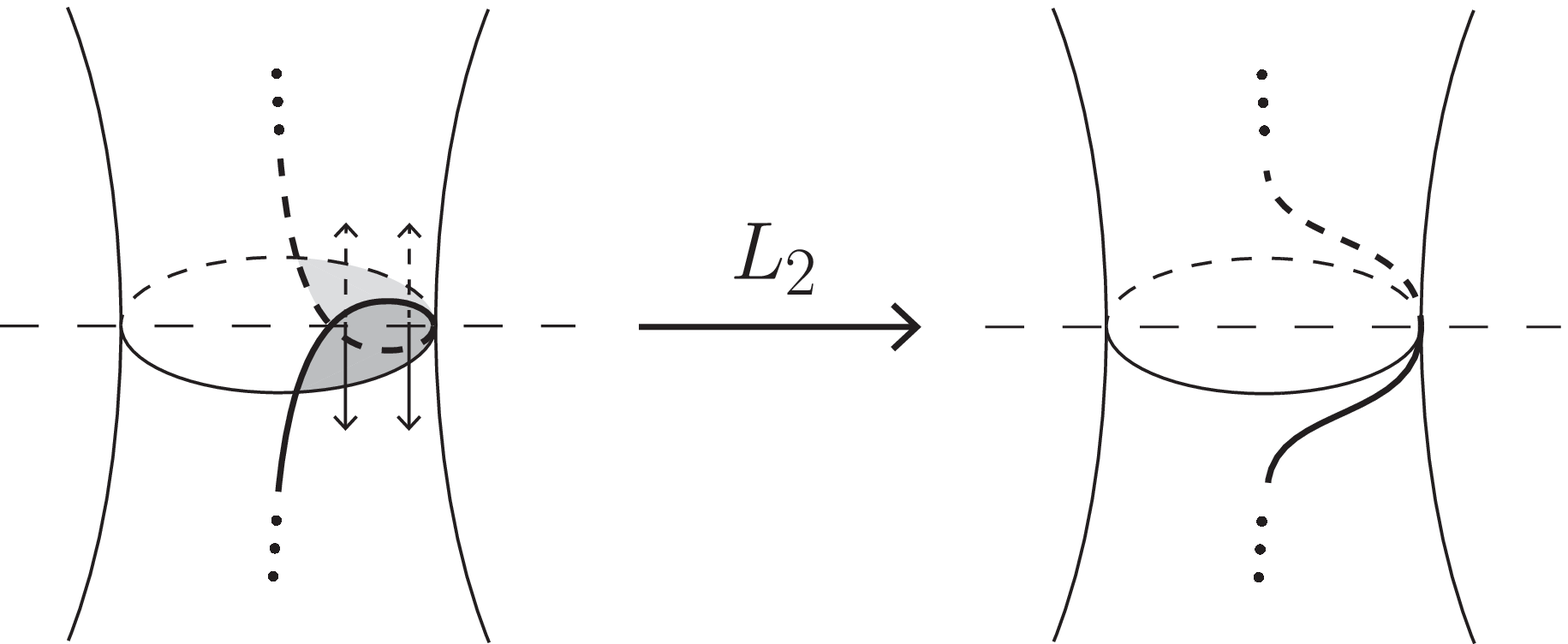}
\end{center}
\caption{}
\label{isotopyL2}
\end{figure}

After replacing the diffeomorphism $T$ in these two cases, the branch points $\{x_1,x_2\}$ remains in $c\cap T(c)$. Hence, if we repeat to replace $T$, the case when $\sharp(\Delta\cap\iota(\Delta))=2$ will definitely occur. In this case, there is a symmetric isotopy $L_3:\Sigma_g\times[0,1]\to \Sigma_g$ such that
\begin{align*}
&L_3(*,0)\text{ is the identity map},\\
&L_3(\beta,1)=\alpha,\\
&L_3(\iota(\beta),1)=\iota(\alpha),
\end{align*}
as in Figure \ref{isotopyL3}. It indicates that $L_3(*,1)T$ preserves the curve $c$. By combining these isotopies, we have obtained a desired symmetric isotopy.

\begin{figure}[htbp]
\begin{center}
\includegraphics[width=70mm]{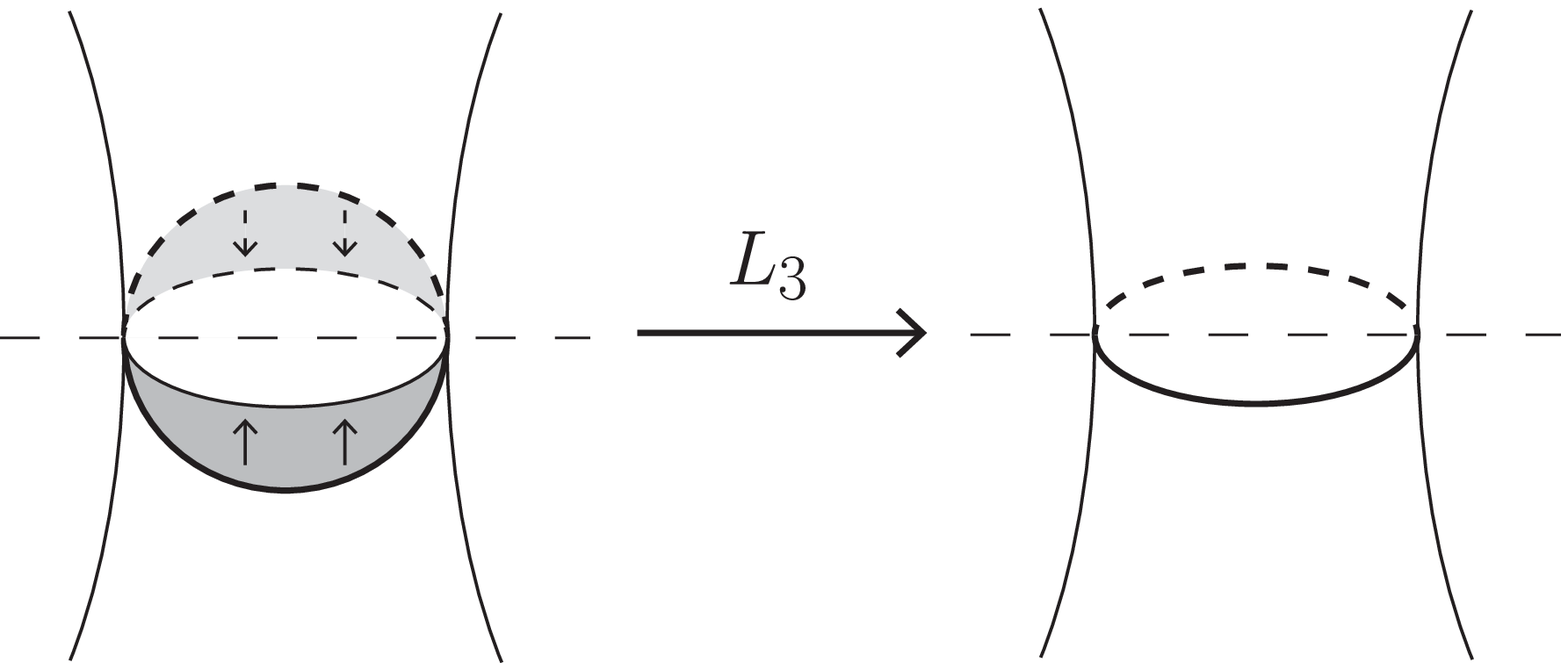}
\end{center}
\caption{}
\label{isotopyL3}
\end{figure}

Next, let $c$ be a separating curve. If the geometric intersection number of $c$ and $T(c)$ is 0, the curves $c$ and $T(c)$ bound an annulus $A$. 
Since $\iota$ acts on $A$ without fixed points, $A/\braket{\iota}$ is also an annulus. 
Hence, we can make a symmetric isotopy which moves $T(c)$ to $c$. 

Assume that the geometric intersection number is not 0. Since we have $c\cap \Sigma_g^\iota=T(c)\cap \Sigma_g^\iota=\emptyset$, the cardinality $\sharp(\Delta\cap\iota(\Delta))\ne1$. By Lemma \ref{lemma:x1x2}, we have $\sharp(\Delta\cap\iota(\Delta))=0\text{ or }2$. By the same argument as in the case when $c$ is non-separating, we can collapse the bigons $\Delta$ and $\iota(\Delta)$.
\end{proof}


\section{An involution on HSBLF}\label{section:involution}

In this section, we prove Theorem \ref{main1}. 

\begin{proof}[Proof of (i) in Theorem \ref{main1}]

Let $f:M\rightarrow S^2$ be genus-$g\geq 3$ HSBLF, 
$c_1,\ldots,c_n\subset\Sigma_g$ vanishing cycles of Lefschetz singularities of $f$ 
and $c\subset\Sigma_g$ a vanishing cycle of indefinite fold singularities of $f$. 
We assume that $c_1,\ldots,c_n$ and $c$ are preserved by the involution $\iota:\Sigma_g\rightarrow \Sigma_g$. 
By the argument in subsection \ref{subsec_handle}, 
We can decompose $M$ as follows: 
\[
M = D^2\times \Sigma_g \cup (h^2_1\amalg\cdots\amalg h^2_n)\cup R^2 \cup D^2\times \Sigma_{g-1}, 
\]
where $h^2_i=D^2\times D^2$ is the $2$-handle attached along the simple closed curve $\{p_i\}\times c_i\in \partial D^2\times \Sigma_g$ 
and $R^2$ is a round $2$-handle. 
We first prove existence of an involution $\omega$ by using the above decomposition. 

\vspace{.5em}

\noindent
{\bf Step 1}: 
We define an involution $\omega_1$ on $D^2\times \Sigma_g$ as follows: 
\[
\begin{array}{rccc}
\omega_1=\text{id}\times \iota : & D^2\times \Sigma_g & \longrightarrow & D^2\times \Sigma_g  \\
& \rotatebox{90}{$\in$} &  & \rotatebox{90}{$\in$} \\ 
& (z,x) & \longmapsto     & (z,\iota(x)).  
\end{array}
\]
In the following steps, we will define an involution on each component in the above decomposition of $M$ 
which is compatible with the involution $\omega_1$. 

\vspace{.5em}

\noindent
{\bf Step 2}: 
We next define an involution $\omega_{2,i}$ on the $2$-handle $h^2_i$ 
attached along $\{q_i\}\times c_i\subset \partial D^2\times \Sigma_g$. 
We will abuse the notation by denoting the attaching circle $\{q_i\}\times c_i$ by $c_i$. 

We take a tubular neighborhood $\nu c_i$ in $\{q_i\}\times \Sigma_g$ and an identification
\[
\nu c_i\cong S^1\times [-1,1]
\]
so that $c_i=S^1\times \{0\}$. 
We assume that the standard orientation of $S^1\times [-1,1]$ 
corresponds to the orientation of $\{q_i\}\times \Sigma_g$. 
We take a sufficiently small neighborhood $I_{q_i}$ of $q_i$ in $\partial D^2$ as follows: 
\[
I_{q_i}=\{q_i\cdot \text{exp}(\sqrt{-1}\theta)\in\partial D^2 | \theta \in[-\varepsilon_1,\varepsilon_1]\}, 
\]
where $\varepsilon_1>0$ is a sufficiently small number. 
Moreover, we identify the neighborhood $I_{q_i}$ with the unit interval $[-1,1]$ 
by using the following map: 
\[
\begin{array}{ccc}
 [-1,1] & \xrightarrow{\sim} & I_{q_i}  \\
 \rotatebox{90}{$\in$} &  & \rotatebox{90}{$\in$} \\ 
 s & \longmapsto     & q_i\cdot \text{exp}(\sqrt{-1}\varepsilon_1 s).  
\end{array}
\]
We regard $I_{q_i}\times [-1,1]$ the subset of $\mathbb{C}$ by the following embedding: 
\[
\begin{array}{ccc}
 I_{q_i}\times [-1,1] & \hookrightarrow & \{z\in\mathbb{C}|\left| \Re{z}\right|\leq 1,\left| \Im{z}\right|\leq 1 \}  \\
 \rotatebox{90}{$\in$} &  & \rotatebox{90}{$\in$} \\ 
 (s,t) & \longmapsto     & s+t\sqrt{-1}.  
\end{array}
\]
We put $J=\{z\in\mathbb{C}|\left| \Re{z}\right|\leq 1,\left| \Im{z}\right|\leq 1 \}$. 
The orientation of $\partial D^2\times \Sigma_g$ is reverse to the natural orientation of $J\times S^1$. 
So the attaching map of the $2$-handle $h^2_i$ is described as follows: 
\[
\begin{array}{rccc}
\varphi_i : & \partial D^2\times D^2 & \longrightarrow & J\times S^1\subset \partial D^2\times \Sigma_g  \\
& \rotatebox{90}{$\in$} &  & \rotatebox{90}{$\in$} \hspace{6em}\\ 
& (w_1,w_2) & \longmapsto     & (\varepsilon_2 w_2w_1, w_1) , \hspace{3.5em}
\end{array}
\]
where $\varepsilon_2>0$ is a sufficiently small number. 
We remark that the map $\varphi_i$ is orientation-preserving 
if we give the natural orientation of $\partial D^2\times D^2$. 

\vspace{.5em}

\noindent
{\bf Case 2.1}: 
If $c_i$ is non-separating, we can take a tubular neighborhood $\nu c_i \cong S^1\times [-1,1]$ 
so that the involution $\omega_1$ acts on $\nu c_i$ as follows: 
\[
\begin{array}{rccc}
\omega_1|_{\nu c_i} : & S^1\times [-1,1] & \longrightarrow & S^1\times [-1,1]  \\
& \rotatebox{90}{$\in$} &  & \rotatebox{90}{$\in$} \\ 
& (z,t) & \longmapsto     & (\overline{z}, -t).
\end{array}
\]
Since the involution $\omega_1:D^2\times \Sigma_g\rightarrow D^2\times\Sigma_g$ preserves the first component, 
$\omega_1$ acts on $I_{q_i}\times\nu c_i\cong J\times S^1$ as follows: 
\[
\begin{array}{rccc}
\omega_1|_{J\times S^1} : & J\times S^1 & \longrightarrow & J\times S^1  \\
& \rotatebox{90}{$\in$} &  & \rotatebox{90}{$\in$} \\ 
& (z_1,z_2) & \longmapsto     & (\overline{z_1},\overline{z_2}).  
\end{array}
\]
We define an involution $\omega_{2,i}$ on the $2$-handle $h^2_i$ as follows: 
\[
\begin{array}{rccc}
\omega_{2,i} : & D^2\times D^2 & \longrightarrow & D^2\times D^2  \\
& \rotatebox{90}{$\in$} &  & \rotatebox{90}{$\in$} \\ 
& (w_1,w_2) & \longmapsto     & (\overline{w_1},\overline{w_2}).  
\end{array}
\]
Then the following diagram commutes: 
\[
\begin{CD}
\partial D^2\times D^2 @>\omega_{2,i}>> \partial D^2\times D^2 \\
@V \varphi_i VV @VV \varphi_i V \\
J\times S^1 @>\omega_1>>J\times S^1. 
\end{CD}
\]
So we can define an involution $\omega_1\cup\omega_{2,i}$ on the manifold $D^2\times \Sigma_g\cup_{\varphi_i}h^2_i$. 

\vspace{.5em}

\noindent
{\bf Case 2.2}: 
If $c_i$ is separating, we can take a tubular neighborhood $\nu c_i \cong S^1\times [-1,1]$ 
so that the involution $\omega_1$ acts on $\nu c_i$ as follows: 
\[
\begin{array}{rccc}
\omega_1|_{\nu c_i} : & S^1\times [-1,1] & \longrightarrow & S^1\times [-1,1]  \\
& \rotatebox{90}{$\in$} &  & \rotatebox{90}{$\in$} \\ 
& (z,t) & \longmapsto     & (-z, t).
\end{array}
\]
Then $\omega_1$ acts on $I_{q_i}\times\nu c_i\cong J\times S^1$ as follows: 
\[
\begin{array}{rccc}
\omega_1|_{J\times S^1} : & J\times S^1 & \longrightarrow & J\times S^1  \\
& \rotatebox{90}{$\in$} &  & \rotatebox{90}{$\in$} \\ 
& (z_1,z_2) & \longmapsto     & (z_1,-z_2).  
\end{array}
\]
We define an involution $\omega_{2,i}$ on the $2$-handle $h^2_i$ as follows: 
\[
\begin{array}{rccc}
\omega_{2,i} : & D^2\times D^2 & \longrightarrow & D^2\times D^2  \\
& \rotatebox{90}{$\in$} &  & \rotatebox{90}{$\in$} \\ 
& (w_1,w_2) & \longmapsto     & (-w_1,-w_2).  
\end{array}
\]
Then the following diagram commutes: 
\[
\begin{CD}
\partial D^2\times D^2 @>\omega_{2,i}>> \partial D^2\times D^2 \\
@V \varphi_i VV @VV \varphi_i V \\
J\times S^1 @>\omega_1>>J\times S^1. 
\end{CD}
\]
So we can define an involution $\omega_1\cup\omega_{2,i}$ on the manifold $D^2\times \Sigma_g\cup_{\varphi_i}h^2_i$. 

Combining Case 2.1 and Case 2.2, we can define the involution $\tilde{\omega}_2=\omega_1\cup (\omega_{2,1}\cup\cdots\cup\omega_{2,n})$
on the $4$-manifold $M_h=M\cup (h^2_1\amalg\cdots\amalg h^2_n)$. 
Before giving an involution on the round $2$-handle, 
we look at the $\Sigma_g$-bundle structure of $\partial M_h$. 
The projection $\pi_h:\partial M_h \rightarrow \partial D^2$ of this bundle is described as follows: 
\begin{align*}
\pi_h(z,x) & =z & ((z,x)\in \partial D^2\times \Sigma_g\setminus (\amalg \Int{\varphi_i(\partial D^2\times D^2)} ), \\
\pi_h(w_1,w_2) & =q_i\cdot \text{exp}(\sqrt{-1}\varepsilon_1\varepsilon_2(\Re{w_1}\Re{w_2}-\Im{w_1}\Im{w_2})) & ((w_1,w_2)\in D^2\times \partial D^2\subset \partial h^2_i).
\end{align*}
Indeed, the map $\pi_h$ is well-defined. 
To see this, we only need to verify the following equation: 
\[
q_i\cdot \text{exp}(\sqrt{-1}\varepsilon_1\varepsilon_2 (\Re{w_1}\Re{w_2}-\Im{w_1}\Im{w_2}))=p_1\circ \varphi_i (w_1,w_2), 
\]
where $(w_1,w_2)\in D^2\times \partial D^2\subset \partial h^2_i$ and $p_1:J\times S^1\rightarrow I_{q_i}$ is the projection. 
$p_1\circ \varphi_i (w_1,w_2)$ is calculated as follows: 
{\allowdisplaybreaks
\begin{align*}
p_1\circ \varphi_i (w_1,w_2) & = p_1(\varepsilon_2 w_2 w_1, w_1) \\
& = q_i\cdot \text{exp}(\sqrt{-1}\varepsilon_1 \Re{(\varepsilon_2 w_2 w_1)}) \\
& = q_i\cdot \text{exp} (\sqrt{-1}\varepsilon_1 \varepsilon_2(\Re{w_1}\Re{w_2}-\Im{w_1}\Im{w_2}) )
\end{align*}
}
So we can verify that $\pi_h$ is well-defined. 

\begin{lem}\label{lem:higher_monodromy}

The involution $\tilde{\omega}_2$ preserves the fibers of $\pi_h$. 
Moreover, there exists a lift $X$ of the vector field $\dfrac{d}{d\theta}\text{exp}(\sqrt{-1}\theta)$ by the map $\pi_h$
which is compatible with the involution $\tilde{\omega}_2$, that is, 
\[
\tilde{\omega}_{2\ast} (X)=X. 
\]

\end{lem}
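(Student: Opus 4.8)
The plan is to verify the two assertions of Lemma~\ref{lem:higher_monodromy} directly from the explicit formulas for $\pi_h$ and $\tilde{\omega}_2$ that have just been set up, handling the two pieces of the bundle $\partial M_h$ separately. Recall that $\partial M_h$ is covered by the portion $U_0$ coming from $\partial D^2\times \Sigma_g$ (minus the attaching regions) together with the portions $U_i$ coming from the boundaries $D^2\times \partial D^2\subset \partial h^2_i$ of the $2$-handles, and that $\tilde{\omega}_2$ restricts on $U_0$ to $\omega_1=\mathrm{id}\times\iota$ and on $U_i$ to $\omega_{2,i}$. The commuting square relating $\varphi_i$, $\omega_{2,i}$ and $\omega_1$ that was established at the end of Case~2.1 and Case~2.2 is exactly the compatibility I will exploit to glue the local statements into a global one.

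First I would check that $\tilde{\omega}_2$ preserves the fibers of $\pi_h$. On $U_0$ this is immediate: $\pi_h(z,x)=z$ and $\omega_1(z,x)=(z,\iota(x))$ share the same first coordinate, so $\pi_h\circ\omega_1=\pi_h$ there. On each $U_i$ I would plug $\omega_{2,i}$ into the handle formula $\pi_h(w_1,w_2)=q_i\cdot\exp(\sqrt{-1}\,\varepsilon_1\varepsilon_2(\Re{w_1}\Re{w_2}-\Im{w_1}\Im{w_2}))$. In Case~2.1 one has $\omega_{2,i}(w_1,w_2)=(\overline{w_1},\overline{w_2})$; conjugating both coordinates sends each of $\Re{w_1},\Re{w_2}$ to itself and each of $\Im{w_1},\Im{w_2}$ to its negative, so the bilinear expression $\Re{w_1}\Re{w_2}-\Im{w_1}\Im{w_2}$ is unchanged and hence $\pi_h\circ\omega_{2,i}=\pi_h$. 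In Case~2.2, $\omega_{2,i}(w_1,w_2)=(-w_1,-w_2)$ multiplies both real parts and both imaginary parts by $-1$, so again the products $\Re{w_1}\Re{w_2}$ and $\Im{w_1}\Im{w_2}$ are invariant. Thus $\tilde{\omega}_2$ sends each fiber $\pi_h^{-1}(z)$ to itself on every chart, and since the charts agree on overlaps by the commuting square, the global map $\tilde{\omega}_2$ preserves fibers of $\pi_h$.

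For the lift, I would first choose any smooth lift $X_0$ of the horizontal vector field $\frac{d}{d\theta}\exp(\sqrt{-1}\theta)$ on $\partial D^2$, which exists because $\pi_h$ is a submersion (a fiber bundle projection). The point is to make this lift $\tilde{\omega}_2$-invariant. Since $\tilde{\omega}_2$ covers the identity on the base (by the first part), the pushforward $\tilde{\omega}_{2\ast}(X_0)$ is again a lift of the same base vector field. Averaging, I set
\[
X=\tfrac{1}{2}\bigl(X_0+\tilde{\omega}_{2\ast}(X_0)\bigr),
\]
which is still a lift of $\frac{d}{d\theta}\exp(\sqrt{-1}\theta)$ and satisfies $\tilde{\omega}_{2\ast}(X)=X$ because $\tilde{\omega}_2$ is an involution, so $\tilde{\omega}_{2\ast}^2=\mathrm{id}$ and the two summands are swapped. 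This averaging trick is the standard way to promote an arbitrary lift to an equivariant one, and it works here precisely because $\tilde{\omega}_2$ is a diffeomorphism of order two that projects to the identity.

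**The main obstacle** I anticipate is not the averaging step but the bookkeeping needed to confirm that the local descriptions of $\tilde{\omega}_2$ genuinely patch into a single well-defined diffeomorphism of $\partial M_h$ whose action is compatible on the overlaps $U_0\cap U_i$; one must make sure the tubular-neighborhood identifications $\nu c_i\cong S^1\times[-1,1]$ chosen in Cases~2.1 and~2.2 are consistent with the global $\iota$-action on $\Sigma_g$, so that $\omega_1$ and $\omega_{2,i}$ really do agree through $\varphi_i$ on the attaching regions. Once that gluing is in hand — which is exactly the content of the two commuting $\mathrm{CD}$-diagrams already displayed — the verification that $\pi_h$ is invariant reduces to the elementary sign computations above, and the existence of the invariant lift follows formally. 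I therefore expect the fiber-preservation claim to be essentially a direct calculation and the equivariant-lift claim to follow from the averaging argument.
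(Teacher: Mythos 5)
Your proposal is correct, and the first half (fiber preservation) is exactly the paper's argument --- you simply carry out the ``direct calculation'' that the paper leaves to the reader, and your sign checks for $\Re{w_1}\Re{w_2}-\Im{w_1}\Im{w_2}$ under $(w_1,w_2)\mapsto(\overline{w_1},\overline{w_2})$ and $(w_1,w_2)\mapsto(-w_1,-w_2)$ are right. For the second half you take a genuinely different route. The paper constructs the lift $X$ explicitly: it takes the obvious horizontal field $X_1$ on the trivialized part, writes down a field $X_2$ on each handle boundary $D^2\times\partial D^2$ using a cutoff function $\varrho$ interpolating between $x_1\partial_{x_2}-y_1\partial_{y_2}$ and $x_2\partial_{x_1}-y_2\partial_{y_1}$, and verifies $d\varphi_i(X_2)=X_1$ on the overlap; equivariance is then checked coordinate by coordinate. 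You instead observe that $\pi_h$ is a fiber bundle projection over the circle, take an arbitrary lift $X_0$, note that $\pi_h\circ\tilde{\omega}_2=\pi_h$ forces $\tilde{\omega}_{2\ast}(X_0)$ to be another lift of the same base field, and average: $X=\tfrac12\bigl(X_0+\tilde{\omega}_{2\ast}(X_0)\bigr)$ is a lift because the lifting condition is linear in each tangent space, and is invariant because $\tilde{\omega}_2$ is an involution. This is sound, shorter, and completely avoids the cutoff computation; it also makes transparent that only two facts are used, namely that $\pi_h$ is a submersion with compact total space and that $\tilde{\omega}_2$ covers the identity. What the paper's explicit construction buys is a concrete formula for $X$ and hence for the return map $\Theta_X$, but nothing downstream in Section 4 uses more than the two properties stated in the lemma (lift plus equivariance), so your formal argument suffices for all later applications. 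The one point you rightly flag --- that the local involutions $\omega_1$ and $\omega_{2,i}$ glue along $\varphi_i$ --- is indeed already secured by the commuting diagrams of Cases 2.1 and 2.2, so no gap remains.
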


\begin{proof}[Proof of Lemma \ref{lem:higher_monodromy}]

To show that $\tilde{\omega}_2$ preserves the fibers of $\pi_h$, 
it is sufficient to prove $\pi_h\circ \tilde{\omega}_2=\pi_h$. 
This equation can be proved easily by direct calculation. 

To prove existence of a lift $X$, we construct $X$ explicitly. 
We define a vector field $X_1$ on $\partial D^2\times \Sigma_g\setminus (\amalg \varphi_i(\partial D^2\times D^2)$ 
as follows:
\[
X_1(\text{exp}(\sqrt{-1}\theta_0), x)=\left. \dfrac{d}{d\theta}\text{exp}(\sqrt{-1}\theta)\right|_{\theta=\theta_0}
\in T_{(\text{exp}(\sqrt{-1}\theta_0), x)}(\partial D^2\times \Sigma_g), 
\]
for a point $(\text{exp}(\sqrt{-1}\theta_0), x)\in \partial D^2\times \Sigma_g\setminus (\amalg \Int{\varphi_i(\partial D^2\times D^2)}$.  
Then $X_1$ is described in $J\times S^1$ as follows: 
\[
X_1(s+t\sqrt{-1},z)=\dfrac{1}{\varepsilon_1} \left.\dfrac{\partial}{\partial s}\right|_{s}\in T_{(s+t\sqrt{-1},z)}(J\times S^1). 
\]
We also define a vector field $X_2$ on $D^2\times \partial D^2\subset\partial h^2_i$ as follows: 
\[
X_2(w_1,w_2)=\dfrac{\varrho(\left|w_1\right|^2)}{\varepsilon_1 \varepsilon_2 \left|w_1\right|^2}\left(x_1\dfrac{\partial}{\partial x_2}-y_1\dfrac{\partial}{\partial y_2}  \right)
+\dfrac{1-\varrho(\left|w_1\right|^2)}{\varepsilon_1 \varepsilon_2}\left( x_2\dfrac{\partial}{\partial x_1}-y_2\dfrac{\partial}{\partial y_1} \right), 
\]
where $w_i=x_i+y_i\sqrt{-1}$ and $\varrho:[0,1]\rightarrow [0,1]$ is a monotone increasing smooth function 
which satisfies the following conditions: 

\begin{itemize}

\item $\varrho(t)=0$ for $t\in\left[0,\frac{1}{3}\right]$; 

\item $\varrho(t)=1$ for $t\in\left[\frac{2}{3},1\right]$. 

\end{itemize} 

\noindent
Then, for $(w_1,w_2)\in \partial D^2\times \partial D^2$, $d\varphi_i(X_2(w_1,w_2))$ is calculated as follows: 
{\allowdisplaybreaks
\begin{align*}
& d\varphi_i(X_2(w_1,w_2)) \\
= & d\varphi_i\left(\dfrac{1}{\varepsilon_1 \varepsilon_2}\left(x_1\dfrac{\partial}{\partial x_2}-y_1\dfrac{\partial}{\partial y_2}  \right)\right) & (\because \left| w_1 \right| = 1) \\
= & \dfrac{1}{\varepsilon_1 \varepsilon_2}x_1 d\varphi_i\left(\dfrac{\partial}{\partial x_2}\right)-
\dfrac{1}{\varepsilon_1 \varepsilon_2}y_1 d\varphi_i\left(\dfrac{\partial}{\partial y_2}  \right) \\
= & \dfrac{1}{\varepsilon_1}x_1 \left(x_1 \dfrac{\partial}{\partial s}+y_1\dfrac{\partial}{\partial t} \right)-
\dfrac{1}{\varepsilon_1}y_1 \left(-y_1\dfrac{\partial}{\partial s}+x_1\dfrac{\partial}{\partial t} \right) \\
= & \dfrac{1}{\varepsilon_1}({x_1}^2+{y_1}^2)\dfrac{\partial}{\partial s} \\
= & X_1(\varphi_i(w_1,w_2)). 
\end{align*}
}
Hence we can define a vector field $X=X_1\cup X_2$ on the manifold $\partial M_h$. 
Moreover, it can be shown that $X_1$ and $X_2$ is a lift of the vector field $\dfrac{d}{d\theta}\text{exp}(\sqrt{-1}\theta)$ 
by the map $\pi_h$. 
Thus, the vector field $X$ is a lift of $\dfrac{d}{d\theta}\text{exp}(\sqrt{-1}\theta)$. 
We can show that the vector field $X$ is compatible with the involution $\tilde{\omega}_2$ by direct calculation. 
This completes the proof of Lemma \ref{lem:higher_monodromy}. 

\end{proof}

We choose a base point $q_0\in \partial D^2\setminus (\amalg I_{q_i})$ 
and define a map $\Theta_X:f^{-1}(q_0)\rightarrow f^{-1}(q_0)$ as follows: 
\[
\Theta_X(x)=c_{X,x}(2\pi), 
\] 
where $c_{X,x}$ is the integral curve of the vector field $X$ constructed in Lemma \ref{lem:higher_monodromy} 
which satisfies $c_{X,x}(0)=x$. 
We identify $f^{-1}(q_0)$ with the surface $\Sigma_g$ via the projection onto the second component. 
Then the map $\Theta_X$ is contained in the centralizer $C(\iota)\subset\Diff_+\Sigma_g$ 
since the vector field $X$ is compatible with $\tilde{\omega}_2$. 
The isotopy class represented by $\Theta_X$ is the monodromy of the boundary of $M_h$. 
In particular, this class is contained in the group $\mathcal{H}_g(c)$. 
By Lemma \ref{lemma:isomH_g(c)}, there exists an isotopy $H_t:\Sigma_g\rightarrow \Sigma_g$ 
satisfying the following conditions: 
\begin{itemize}

\item $H_0=\Theta_X$; 

\item $H_1$ preserves the curve $c$ as a set; 

\item for each level $t$, $H_t$ is in the centralizer $C(\iota)$. 

\end{itemize}
Thus, we obtain the following isomorphism as $\Sigma_g$-bundles: 
\[
\partial M_h \cong [0,1]\times \Sigma_g/((1,x)\sim(0,H_1(x))). 
\]
We identify the above $\Sigma_g$-bundles via the isomorphism. 
Then the involution $\tilde{\omega}_2$ acts on $\partial M_h$ as follows: 
\[
\tilde{\omega}_2(t,x)=(t,\iota(x)), 
\] 
where $(t,x)$ is an element in $[0,1]\times \Sigma_g/((1,x)\sim(0,H_1(x)))\cong\partial M_h $. 

\vspace{.5em}

\noindent
{\bf Step 3}: 
In this step, we define an involution $\omega_3$ on the round $2$-handle $R^2$. 
Since $c$ is non-separating and $c$ is preserved by $\iota$, $c$ contains two fixed points of the involution $\iota$. 
We denote these points by $v_1$ and $v_2$. 
In addition, we can take a tubular neighborhood $\nu c\cong S^1\times [-1,1]$ in $\Sigma_g$
so that the involution $\iota$ acts on $\nu c$ as follows: 
\[
\iota(z,t)=(\overline{z}, -t). 
\]
By perturbing the map $H_1$, we can assume that $H_1$ preserves the neighborhood $\nu c$. 
Then the attaching region of the round $2$-handle $R^2$ is $[0,1]\times \nu c/((1,x)\sim(0,H_1(x)))$. 

\vspace{.5em}

\noindent
{\bf Case 3.1}: 
If $H_1$ preserves the orientation of $c$ and two points $v_1$ and $v_2$, 
then the round handle $R^2$ is untwisted and the restriction $H_1|_{\nu c}$ is described as follows: 
\[
H_1(z,t)=(z,t), 
\]
where $(z,t)\in S^1\times [-1,1]\cong \nu c$. 
Moreover, the attaching map of the round handle is described as follows: 
\[
\begin{array}{rccc}
\varphi : & [0,1]\times \partial D^2\times D^1/\sim & \longrightarrow & [0,1] \times S^1\times [-1,1]/\sim  \\
& \rotatebox{90}{$\in$} &  & \rotatebox{90}{$\in$} \\ 
& (s,z,t) & \longmapsto     & (s,z,t),  
\end{array}
\]
where $[0,1]\times \partial D^2\times D^1$ is the attaching region of $R^2$ 
and $[0,1]\times S^1\times [-1,1]\cong[0,1]\times \nu c$ is the subset of $\partial M_h$. 
We define an involution $\omega_3$ on the round handle as follows: 
\[
\begin{array}{rccc}
\omega_3 : & [0,1]\times D^2\times D^1/\sim & \longrightarrow & [0,1] \times D^2\times D^1/\sim  \\
& \rotatebox{90}{$\in$} &  & \rotatebox{90}{$\in$} \\ 
& (s,z,t) & \longmapsto     & (s,\overline{z},-t),  
\end{array}
\]
Then the following diagram commutes: 
\[
\begin{CD}
[0,1]\times\partial D^2\times D^1 @>\omega_3>> [0,1]\times\partial D^2\times D^1 \\
@V \varphi VV @VV \varphi V \\
[0,1]\times S^1\times [-1,1] @>\tilde{\omega}_2>>[0,1]\times S^1\times [-1,1]. 
\end{CD}
\]
Therefore, we obtain an involution $\tilde{\omega}_3=\tilde{\omega}_2\cup\omega_3$ on $M_h\cup M_r=M_h\cup R^2$. 

\vspace{.5em}

\noindent
{\bf Case 3.2}: 
If $H_1$ preserves the orientation of $c$ but does not preserve two points $v_1$ and $v_2$, 
then the round handle $R^2$ is untwisted and the restriction $H_1|_{\nu c}$ is described as follows: 
\[
H_1(z,t)=(-z,t), 
\]
The attaching map of the round handle is described as follows: 
\[
\begin{array}{rccc}
\varphi : & [0,1]\times \partial D^2\times D^1/\sim & \longrightarrow & [0,1] \times S^1\times [-1,1]/\sim  \\
& \rotatebox{90}{$\in$} &  & \rotatebox{90}{$\in$} \\ 
& (s,z,t) & \longmapsto     & (s,\text{exp}(\pi\sqrt{-1}s)z,t). 
\end{array}
\]
We define an involution $\omega_3$ on the round handle as follows: 
\[
\begin{array}{rccc}
\omega_3 : & [0,1]\times D^2\times D^1/\sim & \longrightarrow & [0,1] \times D^2\times D^1/\sim  \\
& \rotatebox{90}{$\in$} &  & \rotatebox{90}{$\in$} \\ 
& (s,z,t) & \longmapsto     & (s,\text{exp}(-2\pi\sqrt{-1}s)\overline{z},-t),  
\end{array}
\]
Then we can define an involution $\tilde{\omega}_3=\tilde{\omega}_2\cup\omega_3$ on $M_h\cup M_r=M_h\cup R^2$ 
by the same reason as in Case 3.1. 

\vspace{.5em}

\noindent
{\bf Case 3.3}: 
If $H_1$ does not preserve the orientation of $c$ but preserves two points $v_1$ and $v_2$, 
then the round handle $R^2$ is twisted and the restriction $H_1|_{\nu c}$ is described as follows: 
\[
H_1(z,t)=(\overline{z},-t), 
\]
where $(z,t)\in S^1\times [-1,1]\cong \nu c$. 
Moreover, the attaching map of the round handle is described as follows: 
\[
\begin{array}{rccc}
\varphi : & [0,1]\times \partial D^2\times D^1/\sim & \longrightarrow & [0,1] \times S^1\times [-1,1]/\sim  \\
& \rotatebox{90}{$\in$} &  & \rotatebox{90}{$\in$} \\ 
& (s,z,t) & \longmapsto     & (s,z,t). 
\end{array}
\]
We define an involution $\omega_3$ on the round handle as follows: 
\[
\begin{array}{rccc}
\omega_3 : & [0,1]\times D^2\times D^1/\sim & \longrightarrow & [0,1] \times D^2\times D^1/\sim  \\
& \rotatebox{90}{$\in$} &  & \rotatebox{90}{$\in$} \\ 
& (s,z,t) & \longmapsto     & (s,\overline{z},-t),  
\end{array}
\]
Then we can define an involution $\tilde{\omega}_3=\tilde{\omega}_2\cup\omega_3$ on $M_h\cup M_r=M_h\cup R^2$. 

\vspace{.5em}

\noindent
{\bf Case 3.4}: 
If $H_1$ preserves neither the orientation of $c$ nor two points $v_1$ and $v_2$, 
then the round handle $R^2$ is twisted and the restriction $H_1|_{\nu c}$ is described as follows: 
\[
H_1(z,t)=(-\overline{z},-t), 
\]
where $(z,t)\in S^1\times [-1,1]\cong \nu c$. 
Moreover, the attaching map of the round handle is described as follows: 
\[
\begin{array}{rccc}
\varphi : & [0,1]\times \partial D^2\times D^1/\sim & \longrightarrow & [0,1] \times S^1\times [-1,1]/\sim  \\
& \rotatebox{90}{$\in$} &  & \rotatebox{90}{$\in$} \\ 
& (s,z,t) & \longmapsto     &  (s,\text{exp}(\pi\sqrt{-1}s)z,t). 
\end{array}
\]
We define an involution $\omega_3$ on the round handle as follows: 
\[
\begin{array}{rccc}
\omega_3 : & [0,1]\times D^2\times D^1/\sim & \longrightarrow & [0,1] \times D^2\times D^1/\sim  \\
& \rotatebox{90}{$\in$} &  & \rotatebox{90}{$\in$} \\ 
& (s,z,t) & \longmapsto     & (s,\text{exp}(-2\pi\sqrt{-1}s)\overline{z},-t),  
\end{array}
\]
Then we can define an involution $\tilde{\omega}_3=\tilde{\omega}_2\cup\omega_3$ on $M_h\cup M_r=M_h\cup R^2$. 

Eventually, we obtain the involution $\tilde{\omega}_3$ on $M_h\cup M_r$ in any cases. 
Next we look at $\Sigma_{g-1}$-bundle structure of $\partial(M_h\cup M_r)$. 
The projection $\pi_r:\partial (M_h\cup M_r)\rightarrow [0,1]/\{0,1\}$ of this bundle is described as follows: 
\begin{align*}
\pi_r(s,x) =s & \hspace{1em}\bigl((s,x)\in \bigl([0,1]\times \Sigma_g/(1,x)\sim(0,H_1(x))\bigr)\setminus \left([0,1]\times \nu c/\sim \right) \bigr); \\
\pi_r(s,z,t) =s & \hspace{1em} \bigl( (s,z,t)\in [0,1]\times D^2\times \partial D^1 \bigr). 
\end{align*}
Indeed, it is easy to show that $\pi_r$ is well defined. 

\begin{lem}\label{lem:round_monodromy}

The involution $\tilde{\omega}_3$ preserves the fibers of $\pi_r$. 
Moreover, there exists a lift $\tilde{X}$ of the vector field $\dfrac{d}{ds}$ on $[0,1]/\{0,1\}$ by the map $\pi_r$
which is compatible with the involution $\tilde{\omega}_3$. 

\end{lem}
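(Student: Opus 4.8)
The plan is to prove Lemma \ref{lem:round_monodromy} by imitating the structure of the proof of Lemma \ref{lem:higher_monodromy}, since the two statements are formally analogous: in each case we have an involution preserving a fibered piece of the boundary, and we wish to produce an invariant lift of the obvious base vector field. First I would verify that $\tilde{\omega}_3$ preserves the fibers of $\pi_r$, which reduces to checking the identity $\pi_r\circ\tilde{\omega}_3=\pi_r$. On the product region $[0,1]\times\Sigma_g/\!\sim$ this is immediate because $\tilde{\omega}_3$ restricts to $(s,x)\mapsto(s,\iota(x))$ (by the description of $\tilde{\omega}_2$ on $\partial M_h$ obtained at the end of Step 2), so it fixes the $s$-coordinate. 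On the round-handle region $[0,1]\times D^2\times\partial D^1$, the formula for $\omega_3$ in each of Cases 3.1--3.4 fixes the first coordinate $s$ (up to the twisting factor $\text{exp}(-2\pi\sqrt{-1}s)$, which acts only on the $D^2$-coordinate), so again $\pi_r$ is preserved. A short direct computation in each case confirms the equation.

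Next I would construct the lift $\tilde{X}$ explicitly, piecing it together from a vector field on the product part and a vector field on the round handle. On the product region I would simply take the field $\tilde{X}_1$ induced by $\dfrac{d}{ds}$ under the identification $\partial(M_h\cup M_r)\supset[0,1]\times\Sigma_g/\!\sim$; since the gluing is by $H_1\in C(\iota)$ and $\tilde{\omega}_3$ acts there as $\text{id}\times\iota$, the field $\tilde{X}_1$ is automatically invariant. On the round-handle region $[0,1]\times D^2\times\partial D^1$ I would write down a vector field $\tilde{X}_2$ projecting to $\dfrac{d}{ds}$, using a cutoff function as in the previous lemma to interpolate between the behavior near the attaching region (where it must match $\tilde{X}_1$ after $d\varphi$) and the core. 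The matching computation $d\varphi(\tilde{X}_2)=\tilde{X}_1$ along the overlap is the direct analogue of the displayed calculation in the proof of Lemma \ref{lem:higher_monodromy}, and I expect it to go through by the same bookkeeping; in the twisted cases (3.2, 3.4) one must additionally carry the factor $\text{exp}(\pi\sqrt{-1}s)$ through the derivative, producing the compensating $\text{exp}(-2\pi\sqrt{-1}s)$ seen in the $\omega_3$ formulas.

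Finally I would check compatibility, $\tilde{\omega}_{3*}\tilde{X}=\tilde{X}$, again splitting into the product and round-handle regions. On the product part this is clear from invariance of $\tilde{X}_1$. On the round handle it reduces to checking that $\tilde{X}_2$ is invariant under the explicit $\omega_3$ of the relevant case, which is a direct calculation: one pushes $\tilde{X}_2$ forward by $(s,z,t)\mapsto(s,\overline{z},-t)$ (times the twist), and verifies the components transform correctly. The main obstacle I anticipate is precisely the round-handle construction: one must simultaneously arrange three properties — that $\tilde{X}_2$ lifts $\dfrac{d}{ds}$, that it agrees with $\tilde{X}_1$ on the attaching region so that the glued field $\tilde{X}=\tilde{X}_1\cup\tilde{X}_2$ is smooth, and that it is $\omega_3$-invariant — and the presence of the twisting in Cases 3.2--3.4 means the cutoff interpolation and the invariance must be compatible with a nontrivial $s$-dependent rotation of the $D^2$-factor. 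Verifying that a single choice of cutoff field meets all three constraints uniformly across the four cases is the part requiring genuine care, whereas the fiber-preservation and the final invariance checks are routine direct computations.
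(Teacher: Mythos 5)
Your overall architecture is exactly the paper's: split $\partial(M_h\cup M_r)$ into the product part and the round-handle part, take $\tilde{X}_1=\frac{d}{ds}$ on the former, build an explicit $\tilde{X}_2$ on the latter, check $d\varphi(\tilde{X}_2)=\tilde{X}_1$ on the overlap, and verify $\tilde{\omega}_3$-equivariance by direct computation. The fiber-preservation check and the product-part invariance are handled the same way. (One small simplification you miss: only two cases matter here, not four --- the vector field construction depends only on whether $H_1$ preserves the pair $\{v_1,v_2\}$, i.e.\ on whether the attaching map carries the rotation $\exp(\pi\sqrt{-1}s)$; the twisted/untwisted distinction is irrelevant to $\tilde{X}_2$.)

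The one substantive misstep is the cutoff function. It is both unnecessary and, if nontrivial, fatal to equivariance. In the rotated cases the field forced by the matching condition is $\tilde{X}_2=\frac{d}{ds}+\pi y\,\partial_x-\pi x\,\partial_y$, and this is already smooth on all of $[0,1]\times D^2\times\partial D^1$ (the rotational part vanishes on the core $z=0$), so there is nothing to interpolate --- unlike Lemma \ref{lem:higher_monodromy}, where the $\varrho(|w_1|^2)/|w_1|^2$ factor was needed precisely to tame a singularity at $w_1=0$. Worse, if you insert a cutoff and take $\tilde{X}_2^\varrho=\frac{d}{ds}+\varrho(|z|^2)\bigl(\pi y\,\partial_x-\pi x\,\partial_y\bigr)$, equivariance fails wherever $\varrho\neq 1$: writing $R$ for the rotation generator, one computes $d\tilde{\omega}_3\bigl(\frac{d}{ds}\bigr)=\frac{d}{ds}+2R$ (the $2\pi$ coming from differentiating $\exp(-2\pi\sqrt{-1}s)\overline{z}$ in $s$) while $d\tilde{\omega}_3(R)=-R$, so $d\tilde{\omega}_3(\tilde{X}_2^\varrho)=\frac{d}{ds}+(2-\varrho)R$, whereas the target is $\frac{d}{ds}+\varrho R$; since $\tilde{\omega}_3$ preserves $|z|$, this forces $\varrho\equiv 1$. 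This is exactly the half-cancellation visible in the paper's displayed computation, where the coefficient $\pi$ of the rotational part must eat half of the $2\pi$ term. So the ``genuine care'' you flag should resolve not into choosing a clever cutoff but into recognizing that the uncut rotation field is the unique (and globally smooth) choice satisfying all three constraints.
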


\begin{proof}[Proof of Lemma \ref{lem:round_monodromy}]

It is obvious that the involution $\tilde{\omega}_3$ preserves the fibers of $\pi_r$. 
We construct $\tilde{X}$ as we do in Lemma \ref{lem:higher_monodromy}. 
We define a vector field $\tilde{X}_1$ on $\bigl([0,1]\times \Sigma_g/\sim\bigr)\setminus\bigl([0,1]\times \nu c/\sim\bigr)$ as follows: 
\[
\tilde{X}_1(s,x)=\dfrac{d}{ds}.  
\]

We first consider the case $H_1$ preserves the points $v_1$ and $v_2$. 
Then we define a vector field $\tilde{X}_2$ on the round handle $R^2$ as follows: 
\[
\tilde{X}_2(s,z,t)=\dfrac{d}{ds}, 
\]
where $(s,z,t)\in [0,1]\times D^2\times \partial D^1\subset \partial R^2$. 
Then it is easy to show that $d\varphi\left(\dfrac{d}{ds}\right)=\dfrac{d}{ds}$. 
Hence we can define vector field $\tilde{X}=\tilde{X}_1\cup\tilde{X}_2$ on $\partial M_h\cup M_r$. 
It is obvious that $\tilde{X}$ is a lift of the vector field $\dfrac{d}{ds}$ on $[0,1]/\{0,1\}$ by $\pi_r$ 
and is compatible with the involution $\tilde{\omega}_3$. 

We next consider the case $H_1$ does not preserve the points $v_1$ and $v_2$. 
Then we define a vector field $\tilde{X}_2$ on $R^2$ as follows: 
\[
\tilde{X}_2(s,x+y\sqrt{-1},t)=\dfrac{d}{ds}+\pi y\dfrac{\partial}{\partial x} -\pi x \dfrac{\partial}{\partial y},  
\]
where $(s,x+y\sqrt{-1},t)\in [0,1]\times D^2 \times \partial D^1\subset \partial R^2$. 
The differential $d\varphi(\tilde{X}_2(s,x+\sqrt{-1}y,t))$ is calculated as follows: 
{\allowdisplaybreaks
\begin{align*}
& d\varphi(\tilde{X}_2(s,x+\sqrt{-1}y,t)) \\
= & d\varphi\left(\dfrac{d}{ds}+\pi y\dfrac{\partial}{\partial x} -\pi x \dfrac{\partial}{\partial y}\right) \\
= & \left(\dfrac{d}{ds}+\pi (-x\sin{\pi s}-y\cos{\pi s})\dfrac{d}{dx}+\pi(x\cos{\pi s}-y\sin{\pi s})\dfrac{d}{dy}\right) \\
& \hspace{1em}+ \pi y \left(\cos{\pi s}\dfrac{d}{dx}+\sin{\pi s}\dfrac{d}{dy}\right)- \pi x \left(-\sin{\pi s}\dfrac{d}{dx}+\cos{\pi s}\dfrac{d}{dy}\right) \\
= & \dfrac{d}{ds} \\
= & \tilde{X}_1(\varphi (s,x+\sqrt{-1}y,t)). 
\end{align*}
}
Hence we can define a vector field $\tilde{X}=\tilde{X}_1\cup\tilde{X}_2$ on $\partial(M_h\cup M_r)$. 
It is obvious that $\tilde{X}$ is a lift of the vector field $\dfrac{d}{ds}$ on $[0,1]/\{0,1\}$ by $\pi_r$. 
To verify that $\tilde{X}$ is compatible with the involution $\tilde{\omega}_3$, 
We prove that the following equation holds for any points $x\in \partial(M_h\cup M_r)$: 
\[
d\tilde{\omega}_3(\tilde{X}(x))=\tilde{X}(\tilde{\omega}_3(x)). 
\]
If $x$ is contained in $[0,1]\times \Sigma_g/\sim\setminus\bigl([0,1]\times \nu c/\sim\bigr)$, 
the above equation can be proved easily. 
If $x=(s,x+\sqrt{-1}y,t)\in [0,1]\times D^2\times \partial D^1\subset \partial R^2$, 
then $d\tilde{\omega}_3(\tilde{X}(x))$ is calculated as follows: 
{\allowdisplaybreaks
\begin{align*}
& d\tilde{\omega}_3(\tilde{X}(x)) \\
= & d\tilde{\omega}_3(\dfrac{d}{ds}+\pi y\dfrac{\partial}{\partial x} -\pi x \dfrac{\partial}{\partial y}) \\
= & \left(\dfrac{d}{ds}+ 2\pi (-x\sin{2\pi s}-y\cos{2\pi s})\dfrac{\partial}{\partial x}+ 2\pi (-x\cos{2\pi s}+y\sin{2\pi s})\dfrac{\partial}{\partial y}\right) \\
& + \pi y\left( \cos{2\pi s}\dfrac{\partial}{\partial x}-\sin{2\pi s}\dfrac{\partial}{\partial y} \right) - \pi x\left( -\sin{2\pi s}\dfrac{\partial}{\partial x}-\cos{2\pi s}\dfrac{\partial}{\partial y} \right) \\
= & \dfrac{d}{ds}+ \pi (-x\sin{2\pi s}-y\cos{2\pi s})\dfrac{\partial}{\partial x}+ \pi (-x\cos{2\pi s}+y\sin{2\pi s})\dfrac{\partial}{\partial y} \\
= & \tilde{X}(\tilde{\omega}_3(x)). 
\end{align*}
}
Thus, $\tilde{X}$ is compatible with the involution $\tilde{\omega}_3$. 
This completes the proof of Lemma \ref{lem:round_monodromy}. 

\end{proof}

We define the map $\Theta_{\tilde{X}}:\pi_r^{-1}(0)\rightarrow \pi_r^{-1}(0)$ as follows: 
\[
\begin{array}{rccc}
\Theta_{\tilde{X}} : & \pi_r^{-1}(0) & \longrightarrow &\pi_r^{-1}(0)  \\
& \rotatebox{90}{$\in$} &  & \rotatebox{90}{$\in$} \\ 
& x & \longmapsto     &  c_{\tilde{X},x}(1), 
\end{array}
\]
where $c_{\tilde{X},x}$ is the integral curve of $\tilde{X}$ starting at $x$. 
We identify the fiber $\pi_r^{-1}(0)$ with the surface $\Sigma_{g-1}$. 
Then the map $\Theta_{\tilde{X}}$ is contained in the centralizer $C(\iota)$ 
since $\tilde{X}$ is compatible with $\tilde{\omega}_3$. 
Moreover, $\Theta_{\tilde{X}}$ is isotopic to the identity map. 
By Lemma \ref{thm:hypMCG}, we can take an isotopy $\tilde{H}_t:\Sigma_{g-1}\rightarrow\Sigma_{g-1}$ 
which satisfies the following conditions: 
\begin{itemize}

\item $\tilde{H}_0=\Theta_{\tilde{X}}$; 

\item $\tilde{H}_1$ is the identity map; 

\item $\tilde{H}_t$ is contained in the centralizer $C(\iota)$.  

\end{itemize}

By using this isotopy, we obtain the following isomorphism as $\Sigma_{g-1}$-bundle: 
\[
\partial(M_h\cup M_r) \cong [0,1]\times \Sigma_{g-1}/(1,x)\sim(0,x). 
\]
The involution $\tilde{\omega}_3$ acts on $[0,1]\times \Sigma_{g-1}/(1,x)\sim(0,x)$ via the above isomorphism as follows: 
\[
\tilde{\omega}_3(s,x)=(s,\iota(x)). 
\] 

\vspace{.5em}

\noindent
{\bf Step 4}: 
We define an involution $\omega_4$ on $D^2\times \Sigma_{g-1}$ as follows: 
\[
\omega_4(z,x)=(z,\iota(x)),  
\]
where $(z,x)\in D^2\times \Sigma_{g-1}$. 
Then it is obvious that the following diagram commutes: 
\[
\begin{CD}
[0,1]\times \Sigma_{g-1}/\sim @>\tilde{\omega}_3>> [0,1]\times\Sigma_{g-1}/\sim \\
@V \Phi VV @VV \Phi V \\
\partial D^2\times \Sigma_{g-1} @>\omega_4>>\partial D^2\times \Sigma_{g-1}, 
\end{CD}
\]
where $\Phi$ is the attaching map between $M_r$ and $D^2\times \Sigma_{g-1}$, 
which is given by $\Phi(s,x)=(\text{exp}(2\pi\sqrt{-1}s), x)$. 
Hence we obtain an involution $\omega=\tilde{\omega}_3\cup \omega_4$ on $M$. 

\vspace{.5em}

\par

We next look at the fixed point set of $\omega$. 
$\omega$ is equal to $\text{id}\times \iota$ on $D^2\times \Sigma_g$. 
So we obtain: 
\[
M^\omega\cup D^2\times \Sigma_g = D^2\times \{v_1,\ldots,v_{2g+2}\}, 
\]
where $v_1,\ldots,v_{2g+2}\in\Sigma_g$ are the fixed points of $\iota$.  
We remark that $M^\omega\cup D^2\times \Sigma_g$ has the natural orientation 
derived from the orientation of $D^2$. 

$\omega$ acts on the $2$-handle $h^2_i=D^2\times D^2$ as follows: 
\begin{align*}
\omega(w_1,w_2)= & \begin{cases}
(\overline{w_1},\overline{w_2}) & \text{($c_i$:non-separating)}, \\
(-w_1,-w_2) & \text{($c_i$:separating)}, 
\end{cases}
\end{align*}
where $(w_1,w_2)\in D^2\times D^2$. 
So the fixed point set ${h^2_i}^\omega$ is equal to $(D^2\cap\mathbb{R})\times (D^2\cap\mathbb{R})$ if $c_i$ is non-separating 
and is equal to $\{(0,0)\}$ if $c_i$ is separating. 
Furthermore, if $c_i$ is non-separating, we can give an orientation to $(D^2\cap\mathbb{R})\times (D^2\cap\mathbb{R})$ 
which is compatible with the orientation of $D^2\times \{v_1,\ldots,v_{2g+2}\}$. 
Hence the fixed point set ${M_h}^\omega$ is the union of the oriented surfaces and the $s$ points, 
where $s$ is the number of Lefschetz singularities of $f$ whose vanishing cycle is separating. 

$\omega$ acts on the round $2$-handle $R^2$ as follows: 
\begin{align*}
\omega(s,z,t)= & \begin{cases}
(s,\overline{z},-t) & \text{(if $H_1$ preserves the two points $v_1$ and $v_2$)}, \\
(s,\exp{(-2\pi\sqrt{-1}s)}\overline{z},-t) & \text{(otherwise)}, 
\end{cases}
\end{align*} 
where $(s,z,t)\in R^2 = [0,1]\times D^2\times D^1/\sim$. 
So we obtain: 
\begin{align*}
{R^2}^\omega = &\begin{cases}
[0,1]\times (D^2\cap\mathbb{R})\times \{0\}/\sim & \text{(if $H_1$ preserves the two points $v_1$ and $v_2$)}, \\
\{(s,z,0)\in R^2\hspace{.3em}|\hspace{.3em}z=r\exp{(-\pi\sqrt{-1}s)}, r\in [-1,1]\} & \text{(otherwise)}. 
\end{cases}
\end{align*}
Therefore, the fixed point set ${R^2}^\omega$ is equal to the annulus or the M\"{o}bius band. 
Moreover, it is easy to show that the $2$-dimensional part of the fixed point set $(M_h\cup M_r)^\omega$ 
does not admit any orientations even if ${R^2}^\omega$ is equal to the annulus. 
So $(M_h\cup M_r)^\omega$ is the union of the unorientable surfaces and the $s$ points. 

$\omega$ is equal to $\text{id}\times \iota$ on $D^2\times \Sigma_{g-1}$. 
So the fixed point set $(D^2\times \Sigma_{g-1})^\omega$ is equal to $D^2\times\{\tilde{v}_1,\ldots,\tilde{v}_{2g}\}$, 
where $\{\tilde{v}_1,\ldots,\tilde{v}_{2g}\}={\Sigma_{g-1}}^\iota$. 
Eventually, $M^\omega$ is the union of the closed surfaces and the $s$ points. 
The $2$-dimensional part of $M^\omega$ is orientable if $f$ has no indefinite fold singularities 
and is not orientable otherwise. 
This completes the proof of the statement in Theorem \ref{main1} on the fixed point set of $\omega$. 

\vspace{.5em}

We next extend the involution $\omega$ to the manifold $M\sharp s\overline{\mathbb{CP}^2}$. 
We assume that the curves $c_{k_1},\ldots,c_{k_s}$ are separating. 
We construct the manifold $M\sharp s\overline{\mathbb{CP}^2}$ by blowing up $M$ $s$ times at $(0,0)\in h^2_{k_i}$ ($i=1,\ldots,s$). 
In other word, $M\sharp s\overline{\mathbb{CP}^2}$ is decomposed as follows: 
\[
M\sharp s\overline{\mathbb{CP}^2} = D^2\times \Sigma_g\cup (h^2_1\amalg\hspace{.3em}\cdots\cdots\hspace{-3em}\raisebox{.8em}{\footnotesize{\mbox{$\hat{k_1},\ldots,\hat{k_s}$}}}\amalg h^2_n)\cup (\tilde{h}_{k_1}\amalg\cdots\amalg\tilde{h}_{k_s})\cup R^2 \cup D^2\times \Sigma_{g-1}, 
\]
where $\tilde{h}_{k_i}=\{((w_1,w_2),[l_1:l_2])\in D^2\times D^2\times \mathbb{CP}^1 \hspace{.3em} | \hspace{.3em} w_1l_2-w_2l_1=0\}\cong h_{k_i}\sharp\overline{\mathbb{CP}^2}$. 
We define an involution $\overline{\omega}$ on $M \sharp s\overline{\mathbb{CP}^2}$ as follows: 
\begin{align*}
\overline{\omega}(x)= & x & \text{($x\in M\sharp s \overline{\mathbb{CP}^2}\setminus (\tilde{h}_{k_1}\amalg\cdots\amalg \tilde{h}_{k_s})$)}, \\
\overline{\omega}((w_1,w_2),[l_1:l_2])= & ((-w_1,-w_2),[l_1:l_2]) & \text{($((w_1,w_2),[l_1:l_2])\in\tilde{h}_{k_i}$)}.  
\end{align*}
It is obvious that $\overline{\omega}$ is an extension of $\omega$. 
The fixed point set of $\overline{\omega}$ is the union of the $2$-dimensional part of $M^\omega$ and $s$ $2$-spheres. 

\par

We next prove that $M\sharp s \overline{\mathbb{CP}^2}/\overline{\omega}$ is diffeomorphic to $S\sharp 2s \overline{\mathbb{CP}^2}$, 
where $S$ is an $S^2$-bundle over $S^2$. 
Since $\Sigma_g/\iota$ is diffeomorphic to $S^2$, 
it is easy to see that $D^2\times \Sigma_g/\overline{\omega}$ is diffeomorphic to $D^2\times S^2$. 
So $M\sharp s \overline{\mathbb{CP}^2}$ is obtained by attaching $h_j/\overline{\omega}$ ($j\neq k_1,\ldots,k_s$), $\tilde{h}_{k_i}/\overline{\omega}$, 
$R^2/\overline{\omega}$ and $D^2\times \Sigma_{g-1}/\overline{\omega}\cong D^2\times S^2$ to $D^2\times S^2$. 

\begin{lem}\label{lem:quotient_nonsep}

Suppose that $c_i$ is non-separating. Then, 
\[
(D^2\times \Sigma_g\cup_{\varphi_i}h^2_i)/\overline{\omega}\cong D^2\times S^2. 
\]

\end{lem}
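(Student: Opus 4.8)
The plan is to localize the computation near the attaching region, identify the quotient of $D^2\times\Sigma_g$ with $D^2\times S^2$ via the hyperelliptic double cover, and then check that attaching the quotient of $h^2_i$ modifies only the branch locus inside $D^2\times S^2$ without changing the ambient product. First I would record the model for $(D^2\times\Sigma_g)/\omega_1$: since $\iota$ is the hyperelliptic involution, the quotient $\Sigma_g/\iota$ is diffeomorphic to $S^2$ and the quotient map $p\colon\Sigma_g\to S^2$ is a double branched covering with branch points $p(v_1),\dots,p(v_{2g+2})$. As $\omega_1=\mathrm{id}\times\iota$ preserves the $D^2$-factor, $(D^2\times\Sigma_g)/\omega_1\cong D^2\times S^2$, and the projection is a double cover branched along the $2g+2$ vertical disks $D^2\times\{p(v_j)\}$. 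Because $\nu c_i$ is $\iota$-invariant and $c_i$ is non-separating meeting $\Sigma_g^\iota$ in the two points $v_1,v_2$, the image $p(c_i)$ is an embedded arc joining $p(v_1)$ to $p(v_2)$, and $\nu c_i/\iota$ is a disk neighborhood of this arc (the involution $\iota|_{\nu c_i}(z,t)=(\overline{z},-t)$ of Case 2.1 is orientation-preserving with two fixed points, so an Euler-characteristic count forces the quotient to be a disk). Away from $D^2\times\nu c_i$ the involution $\overline\omega$ is just $\mathrm{id}\times\iota$ on a product, so it suffices to understand the quotient of the local piece $(D^2\times\nu c_i)\cup_{\varphi_i}h^2_i$ and to match it with the surrounding product $D^2\times\bigl(S^2\setminus\nu p(c_i)\bigr)$.

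Next I would compute the quotient of the $2$-handle. In the non-separating case $\omega_{2,i}$ is complex conjugation $(w_1,w_2)\mapsto(\overline{w_1},\overline{w_2})$ on $h^2_i=D^2\times D^2$, that is, $+1$ on the real $2$-plane and $-1$ on the imaginary $2$-plane. Writing $w_j=x_j+\sqrt{-1}y_j$, I would pass to the invariant coordinates $x_1,x_2,y_1^2,y_2^2,y_1y_2$, identify the quotient of the imaginary plane $\mathbb{R}^2/\{\pm1\}$ with the quadric cone $\{ab=c^2,\ a,b\ge0\}$, and resolve it, so that $h^2_i/\omega_{2,i}$ becomes a smooth $4$-ball in which the image of the fixed locus $(D^2\cap\mathbb{R})\times(D^2\cap\mathbb{R})$ is a properly embedded $2$-disk (the new piece of branch surface). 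The attaching map $\varphi_i(w_1,w_2)=(\varepsilon_2w_2w_1,w_1)$ intertwines $\omega_{2,i}$ with $\omega_1|_{J\times S^1}$ by the commuting square already established in Case 2.1, so it descends to an attaching map of this ball onto the quotient of $J\times S^1$.

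Finally, I would assemble the pieces. Since $h^2_i/\omega_{2,i}$ is a $4$-ball glued onto $D^2\times S^2$ along the quotient of its attaching region, the content of the lemma is that this reconstructs the local product $D^2\times(\nu c_i/\iota)\cong D^2\times D^2$, so that together with the surrounding product the whole quotient is $D^2\times S^2$. Equivalently, the effect on the double branched covering is only to tube the two branch disks $D^2\times\{p(v_1)\}$ and $D^2\times\{p(v_2)\}$ by a band following $p(c_i)$, leaving the ambient $D^2\times S^2$ unchanged; this is the reverse of the Fuller and Siebert--Tian description of hyperelliptic Lefschetz fibrations as branched covers of sphere bundles.

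The main obstacle will be this last matching step: verifying by explicit coordinates that the descended attaching map glues the resolved quotient ball onto $D^2\times\bigl(S^2\setminus\nu p(c_i)\bigr)$ so as to give the untwisted product, rather than a nontrivial $S^2$-bundle or a differently filled piece. This requires reconciling the cone resolution of $h^2_i/\omega_{2,i}$ with the explicit form of $\varphi_i$, and carefully tracking both the orientations and the $-1$ relative framing of the Lefschetz handle through the quotient (the latter being absorbed into the twist of the tubing of the branch surface). The compatible commuting squares from the construction of $\omega_{2,i}$, together with the product form of $\omega_1$ on the complement, are precisely what should force the gluing to come out trivial.
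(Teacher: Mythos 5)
Your setup agrees with the paper's: the quotient $(D^2\times\Sigma_g)/\omega_1\cong D^2\times S^2$ via the hyperelliptic quotient $\Sigma_g/\iota\cong S^2$, and $h^2_i/\omega_{2,i}$ is a smooth $4$-ball (the quotient of $D^4$ by complex conjugation, branched over the unknotted disk $(D^2\cap\mathbb{R})\times(D^2\cap\mathbb{R})$). Your description of the effect on the branch locus (tubing the two vertical disks over $p(v_1)$ and $p(v_2)$ along the arc $p(c_i)$) is also correct. But the step you flag as ``the main obstacle'' --- matching the descended attaching map with the surrounding product by explicit coordinates, while tracking orientations and the $-1$ relative framing --- is exactly the step you have not supplied, and the paper disposes of it by a structural observation that makes all of that bookkeeping unnecessary: the attaching region $\partial D^2\times D^2\subset\partial h^2_i$ is a solid torus on which $\omega_{2,i}$ acts by $(z,w)\mapsto(\overline{z},\overline{w})$, and its quotient is a $3$-ball in $\partial\bigl(h^2_i/\overline{\omega}\bigr)\cong S^3$. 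Hence the descended gluing is not a $2$-handle attachment at all; it is the attachment of a $4$-ball along a $3$-ball in the boundary, i.e.\ a boundary connected sum $D^2\times S^2\,\natural\, D^4\cong D^2\times S^2$, which is a diffeomorphism regardless of the precise identification. No framing or orientation tracking is needed, and indeed the $-1$ framing is a red herring in the non-separating case (it only becomes relevant in the separating case, where after blowing up the quotient pieces really are $2$-handles attached along circles).

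So the gap is concrete: without the observation that the attaching region descends to a $3$-ball, your proposal reduces the lemma to a coordinate computation that you describe but do not perform, and whose outcome you only assert ``should'' be trivial. The route you sketch for it is also harder than it needs to be --- for instance, the ``resolution'' of the quadric cone $\mathbb{R}^2/\{\pm1\}$ is just the smooth identification $z\mapsto z^2$, and the real content is not a local normal form for the branch surface but the global statement that the glued-on piece contributes nothing to the diffeomorphism type. Adding the single sentence that the quotient of the attaching region is a $3$-ball in $\partial D^4$, so that the union is $D^2\times S^2\,\natural\, D^4$, would close the gap and collapse your third and fourth paragraphs entirely.
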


\begin{proof}[Proof of Lemma \ref{lem:quotient_nonsep}]

If we identify $h^2_i=D^2\times D^2$ with $D^4$, 
then $\overline{\omega}$ is equal to the covering transformation of the double covering $D^4\rightarrow D^4$ 
branched at the unknotted $2$-disk in $D^4$. 
In particular, we obtain $h^2_i/\overline{\omega}\cong D^4$. 
Moreover, the attaching region of $h^2_i$ corresponds to the $3$-disk in $\partial D^4=\partial h^2_i/\overline{\omega}$. 
Denote by $\overline{\varphi_i}:h^2_i/\overline{\omega}\rightarrow \partial D^2\times \Sigma_g/\overline{\omega}$ 
the embedding induced by $\varphi_i$. 
Then we obtain: 
{\allowdisplaybreaks
\begin{align*}
(D^2\times \Sigma_g\cup_{\varphi_i}h^2_i)/\overline{\omega} & \cong (D^2\times \Sigma_g/\overline{\omega})\cup_{\overline{\varphi_i}}h^2_i/\overline{\omega} \\
& \cong D^2\times S^2 \natural D^4 \\
& \cong D^2\times S^2. 
\end{align*}
} 
This completes the proof of Lemma \ref{lem:quotient_nonsep}. 

\end{proof}

\begin{lem}\label{lem:quotient_sep}

For each $i\in\{1,\ldots,s\}$, 
$(D^2\times \Sigma_g\cup_{\varphi_i}\tilde{h}^2_{k_i})/\overline{\omega}\cong D^2\times S^2\sharp 2\overline{\mathbb{CP}^2}$. 

\end{lem}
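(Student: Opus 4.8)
The plan is to compute the quotient piece by piece, exactly as in the proof of Lemma \ref{lem:quotient_nonsep}, and then to analyse the gluing. As there, since $\Sigma_g/\iota\cong S^2$ we have $D^2\times\Sigma_g/\overline{\omega}\cong D^2\times S^2$, with the double cover $D^2\times\Sigma_g\to D^2\times S^2$ branched over the $2g+2$ disks $D^2\times\{\bar{v}_j\}$, where $\bar{v}_j$ is the image of the fixed point $v_j$ of $\iota$. Because $c_{k_i}$ is separating, Lemma \ref{lem:branch-scc} gives $c_{k_i}\cap\Sigma_g^\iota=\emptyset$, so $\overline{\omega}$ acts freely on the attaching circle, which therefore descends to an embedded circle $\bar{c}_{k_i}\subset\partial(D^2\times S^2)$ disjoint from the branch disks.

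First I would identify the quotient of the blown-up handle. From the given descriptions $\tilde{h}_{k_i}=\{((w_1,w_2),[l_1:l_2])\mid w_1l_2-w_2l_1=0\}$ and $\overline{\omega}((w_1,w_2),[l_1:l_2])=((-w_1,-w_2),[l_1:l_2])$, the fixed point set is exactly the exceptional sphere $E=\{(0,0)\}\times\mathbb{CP}^1$, and in the affine chart $l_1\ne0$ the involution takes the normal form $(w_1,u)\mapsto(-w_1,u)$. Thus $\overline{\omega}$ fixes $E$ and acts by $-1$ on its normal bundle $O(-1)$; since passing to the quotient squares the fibre coordinate, the quotient $\tilde{h}_{k_i}/\overline{\omega}$ is the disk bundle $D(O(-2))$ over $S^2$ whose zero section is the branch sphere $\bar{E}$ with $[\bar{E}]^2=-2$. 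Equivalently, blowing up the isolated fixed point upstairs realises the minimal resolution of the $A_1$ singularity of $D^4/\{\pm1\}$ downstairs.

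Next I would assemble $Q:=(D^2\times\Sigma_g\cup_{\varphi_i}\tilde{h}_{k_i})/\overline{\omega}\cong(D^2\times S^2)\cup D(O(-2))$, the two pieces being glued along the solid torus that is the image of the attaching region $\varphi_i(\partial D^2\times D^2)$. A Mayer--Vietoris computation gives $H_2(Q)\cong\mathbb{Z}^3$, generated by the fibre $[S^2]$ of the base, the branch sphere $[\bar{E}]$, and the class $[\Sigma]$ obtained by capping the descended core of the handle with a disk that $\bar{c}_{k_i}$ bounds in $D^2\times S^2$. For the intersection pairing, $[S^2]$ is disjoint from the handle and has $[S^2]^2=0$, while $\bar{E}$ meets $\Sigma$ once, so $[\bar{E}]^2=-2$ and $[\bar{E}]\cdot[\Sigma]=1$; the self-intersection $[\Sigma]^2$ is read off from the $-1$-framing of the Lefschetz handle (Kas) together with the $2$-fold branching in the $w_1$-direction, and comes out to be $-1$. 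The resulting form $\langle0\rangle\oplus\left(\begin{smallmatrix}-2&1\\1&-1\end{smallmatrix}\right)\cong\langle0\rangle\oplus\langle-1\rangle\oplus\langle-1\rangle$ is that of $D^2\times S^2\sharp2\overline{\mathbb{CP}^2}$.

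Finally I would upgrade this to a diffeomorphism. The space $Q$ carries a handle decomposition with one $0$-handle and three $2$-handles, corresponding to $[S^2]$, $[\bar{E}]$ and the descended core; I would draw the associated Kirby diagram and, using the computed framings, slide handles so as to split off two $(-1)$-framed unknots unlinked from the rest, which are exactly the two $\overline{\mathbb{CP}^2}$ summands, leaving the standard diagram of $D^2\times S^2$. I expect this last step to be the main obstacle: one must pin down the framing with which $D(O(-2))$ is attached along $\bar{c}_{k_i}$ precisely enough to exhibit the two exceptional classes, rather than merely matching the intersection form. The explicit formulas for $\varphi_i$ and $\overline{\omega}$ fixed in the statement and in the previous steps are exactly what make this framing bookkeeping manageable.
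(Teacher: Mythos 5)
Your proposal follows essentially the same route as the paper: quotient the base piece to $D^2\times S^2$, identify $\tilde{h}_{k_i}/\overline{\omega}$ as the Euler-number $-2$ disk bundle over $S^2$ (the paper does this via explicit trivializations $\psi_1,\psi_2$ and the fiber-squaring map $\mathcal{P}$, which is your ``$A_1$-resolution'' picture), and then analyse how that bundle is glued to $D^2\times S^2$. The final step you flag as the main obstacle is precisely where the paper concludes, and it needs no intersection-form argument: the $-2$ disk bundle is decomposed into two $2$-handles, $\tilde{H}_1$ attached along $c_{k_i}/\overline{\omega}$ with framing $-1$ (read off from the explicit attaching map $\tilde{\Phi}(w_1,w_2)=(\varepsilon_2w_2w_1,w_2)$) and $\tilde{H}_2$ attached along the belt circle of $\tilde{H}_1$ with framing $-2$, so the Kirby diagram is a $-1$-framed unknot with a $-2$-framed meridian and two blow-downs give $D^2\times S^2\sharp 2\overline{\mathbb{CP}^2}$ --- data your intermediate computations already essentially contain.
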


\begin{proof}[Proof of Lemma \ref{lem:quotient_sep}]

By eliminating the corner of $D^2\times D^2$, 
we identify $\tilde{h}^2_{k_i}$ with the following space: 
\[
H=\{((w_1,w_2),[l_1:l_2])\in D^4\times \mathbb{CP}^1\hspace{.3em} | \hspace{.3em} w_1l_2-w_2l_1=0 \}. 
\]
By the above identification, the attaching region of $\tilde{h}^2_{k_i}$ corresponds to the tubular neighborhood of the circle 
$\{((w_1,0),[1:0])\in \partial H \hspace{.3em} | \hspace{.3em} \left| w_1 \right|=1\}$ in $\partial H$. 
Let $p_2:H\rightarrow \mathbb{CP}^1$ be the projection onto the second component. 
Then $p_2$ is the $D^2$-bundle over the $2$-sphere with Euler number $-1$. 
We define $D_1,D_2\subset \mathbb{CP}^1$ and local trivializations $\psi_1$ and $\psi_2$ of $p_2$ as follows: 
\begin{align*}
D_1 = & \{[l_1:l_2]\in \mathbb{CP}^1 \hspace{.3em} | \hspace{.3em} \left|l_1\right|\geq \left|l_2\right|\}, \\
D_2 = & \{[l_1:l_2]\in \mathbb{CP}^1 \hspace{.3em} | \hspace{.3em} \left|l_2\right|\geq \left|l_1\right|\}, 
\end{align*}
\[
\begin{array}{rccc}
\psi_1 : & D^2\times D^2 & \longrightarrow &p_2^{-1}(D_1)  \\
& \rotatebox{90}{$\in$} &  & \rotatebox{90}{$\in$} \\ 
& (w_1,w_2) & \longmapsto     &  \left(\dfrac{w_2}{\sqrt{1+\left|w_1\right|^2}}(1,w_1),[1,w_1] \right), 
\end{array}
\]
\[
\begin{array}{rccc}
\psi_2 : & D^2\times D^2 & \longrightarrow &p_2^{-1}(D_2)  \\
& \rotatebox{90}{$\in$} &  & \rotatebox{90}{$\in$} \\ 
& (w_1,w_2) & \longmapsto     &  \left(\dfrac{w_2}{\sqrt{1+\left|w_1\right|^2}}(w_1,1),[w_1,1] \right). 
\end{array}
\]
Denote $p_2^{-1}(D_1)$ and $p_2^{-1}(D_2)$ by $H_1$ and $H_2$, respectively.  
We identify $H_1$ and $H_2$ with $D^2\times D^2$ by the above trivializations. 
Then $H$ can be identified with $D^2\times D^2 \cup_{\Psi} D^2\times D^2$, 
where $\Psi=\psi_1^{-1}\circ \psi_2:(w_1,w_2)\longmapsto (\dfrac{1}{w_1},w_1w_2)$. 
We remark that the attaching region of $H$ corresponds to $\partial D^2\times D^2\subset \partial H_1$. 

We define $\tilde{H}=\tilde{H}_1\cup_{\tilde{\Psi}}\tilde{H}_2$, where $\tilde{H}_i=D^2\times D^2$ ($i=1,2$) 
and $\tilde{\Psi}:\partial D^2\times D^2\rightarrow \partial D^2\times D^2$ is a diffeomorphism 
defined as follows: 
\[
\tilde{\Psi}(w_1,w_2)=(\dfrac{1}{w_1},{w_1}^2w_2). 
\] 
Then we can define $\mathcal{P}:H\rightarrow \tilde{H}$ as follows:
\begin{align*}
\mathcal{P}(w_1,w_2) = & \begin{cases}
(w_1,{w_2}^2)\in \tilde{H}_1 & \text{($(w_1,w_2)\in H_1$)}, \\
(w_1,{w_2}^2)\in \tilde{H}_2 & \text{($(w_1,w_2)\in H_2$)}. 
\end{cases}
\end{align*}
The map $\mathcal{P}$ is a double branched covering branched at the $0$-section of $\tilde{H}$ as a $D^2$-bundle.  
Moreover, $\tilde{\omega}$ is the non-trivial covering transformation of $\mathcal{P}$. 
So we obtain $H/\tilde{\omega}\cong \tilde{H}$. 

\par

Since the attaching region of $H$ is mapped by $\mathcal{P}$ to $D^2\times \partial D^2\subset \partial \tilde{H}_1$, 
we can regard $\tilde{H}_1$ and $\tilde{H}_2$ as $2$-handles. 
Thus, $(D^2\times \Sigma_g\cup_{\varphi_i}\tilde{h}^2_{k_i})/\overline{\omega}$ is obtained by attaching the $2$-handles $\tilde{H}_1$ and $\tilde{H}_2$ 
to $D^2\times S^2$. 
To prove the statement, we look at the attaching maps of $\tilde{H}_1$ and $\tilde{H}_2$. 

We take an identification $\nu c_{k_i}\cong J\times S^1$ 
as we take in Step 2 of the construction of $\omega$. 
Then the attaching map $\varphi_{k_i}$ of the $2$-handle $h^2_{k_i}$ satisfies $\varphi_{k_i}(w_1,w_2)=(\varepsilon_2w_2w_1,w_1)$. 
Since $H$ is obtained by eliminating the corner of $\tilde{h}^2_{k_i}$, 
The attaching map of $H_1$ is described as follows: 
\[
\begin{array}{rcccc}
\Phi : & \partial H_1\supset & D^2\times \partial D^2 & \longrightarrow & J\times S^1 \\
&& \rotatebox{90}{$\in$} & & \rotatebox{90}{$\in$} \\
&& (w_1,w_2) & \longmapsto & (\varepsilon_2{w_2}^2w_1,w_2). 
\end{array}
\]

For an element $(z_1,z_2)\in J\times S^1$, $\overline{\omega}(z_1,z_2)=(z_1,-z_2)$. 
So we obtain $J \times S^1/\overline{\omega}\cong J\times S^1$ 
and the quotient map $/\overline{\omega}:J\times S^1\rightarrow J\times S^1/\overline{\omega}\cong J\times S^1$ 
satisfies $/\overline{\omega}(z_1,z_2)=(z_1,{z_2}^2)$. 
Thus, the attaching map $\tilde{\Phi}:D^2\times \partial D^2\rightarrow J\times S^1$ of $\tilde{H}_1$ 
satisfies $\tilde{\Phi}(w_1,w_2)=(\varepsilon_2w_2w_1,w_2)$. 
It is easy to see that the attaching circle of $\tilde{H}_1$ is equal to the circle $c_{k_i}/\overline{\omega}$. 
Moreover, the framing of $\tilde{\Phi}$ is $-1$ relative to the framing along $\{\ast\}\times S^2\subset \partial D^2\times S^2$. 

By the definition of $\tilde{\Psi}$, the attaching circle of $\tilde{H}_2$ is equal to the belt circle of $\tilde{H}_1$, 
which is isotopic to the meridian of the attaching circle of $\tilde{H}_1$. 
In particular, there exists the natural framing of the attaching circle of $\tilde{H}_2$ 
which is represented by the meridian of the attaching circle of $\tilde{H}_1$ parallel to the attahcing circle of $\tilde{H}_2$. 
Since the Euler number of $\tilde{H}$ as a $D^2$-bundle is equal to $-2$, 
the framing of the attaching map $\tilde{\Psi}$ is equal to $-2$ relative to the natural framing. 
Therefore, we can draw a Kirby diagram of  $(D^2\times \Sigma_g\cup_{\varphi_i}\tilde{h}^2_{k_i})/\overline{\omega}$ 
as shown in Figure \ref{kirby_blowup}. It is obvious that this manifold is diffeomorphic to $D^2\times S^2\sharp 2\overline{\mathbb{CP}^2}$
and this completes the proof of Lemma \ref{lem:quotient_sep}. 

\begin{figure}[htbp]
\begin{center}
\includegraphics[width=90mm]{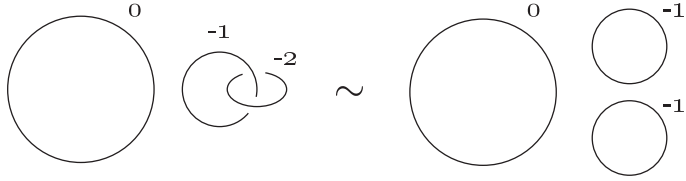}
\end{center}
\caption{the $-1$-framed knot describes $\tilde{H}_1$, while the $-2$-framed knot describes $\tilde{H}_2$. }
\label{kirby_blowup}
\end{figure}

\end{proof}

By applying the arguments in Lemma \ref{lem:quotient_nonsep} and \ref{lem:quotient_sep} successively, 
we obtain $M_h\sharp s\overline{\mathbb{CP}^2}/\overline{\omega}\cong D^2\times S^2 \sharp 2s \overline{\mathbb{CP}^2}$.  

\begin{lem}\label{lem:quotient_round}

$((M_h\cup M_r)\sharp s\overline{\mathbb{CP}^2})/\overline{\omega}\cong D^2\times S^2 \sharp 2s \overline{\mathbb{CP}^2}$. 

\end{lem}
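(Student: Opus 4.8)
The plan is to compute the quotient $R^2/\omega_3$ of the round $2$-handle explicitly and to recognize it as an external collar attached to a codimension-$0$ piece of the boundary of $M_h\sharp s\overline{\mathbb{CP}^2}/\overline{\omega}$. Since attaching such a collar does not change the diffeomorphism type, the lemma will follow immediately from the already established identification $M_h\sharp s\overline{\mathbb{CP}^2}/\overline{\omega}\cong D^2\times S^2\sharp 2s\overline{\mathbb{CP}^2}$. Observe first that all the blow-ups lie in the higher side and hence do not meet the round handle $R^2$, so it suffices to analyze $(M_h\cup M_r)/\overline{\omega}=(M_h/\overline{\omega})\cup(R^2/\omega_3)$ and to track how $R^2/\omega_3$ is glued on.

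Next I would compute the quotient slice by slice. In each of the four cases of Step $3$ the involution $\omega_3$ preserves the $[0,1]$-coordinate and acts on a slice $\{s\}\times D^2\times D^1$ as $(z,t)\mapsto(\overline{z},-t)$, up to the $s$-dependent rotation $z\mapsto \text{exp}(-2\pi\sqrt{-1}s)z$ occurring in Cases $3.2$ and $3.4$. Writing $w=\Im z+t\sqrt{-1}$, this involution becomes $(\Re z,w)\mapsto(\Re z,-w)$, whose quotient is realized by the squaring map $w\mapsto w^2$ exactly as in the proof of Lemma \ref{lem:quotient_sep}. Thus $(D^2\times D^1)/\omega_3\cong D^2\times D^1$, branched over the arc $\{w=0\}$, and this is compatible with the round-handle gluing, so that $R^2/\omega_3\cong S^1\times D^3$. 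Since the quotient is orientable, this $D^3$-bundle over $S^1$ must be the trivial one even in the twisted Cases $3.3$ and $3.4$.

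I would then identify the attaching region. The attaching region of $R^2$ is $[0,1]\times\nu c/\sim$, so its image is $S^1\times(\nu c/\iota)$. Because $c$ is non-separating and $\iota$ acts on $\nu c\cong S^1\times[-1,1]$ by $(z,t)\mapsto(\overline{z},-t)$, which swaps the two boundary circles and fixes the two points $v_1,v_2$, a Riemann--Hurwitz count gives $\chi(\nu c/\iota)=1$, and since the quotient has boundary it is a disk $D^2$. This disk is precisely one of the two hemispheres of $\partial D^3=S^2$ in $R^2/\omega_3\cong S^1\times D^3$, the other hemisphere being the free boundary. Writing $D^3\cong D^2\times[0,1]$ with the attaching hemisphere as the face $D^2\times\{0\}$, we obtain $R^2/\omega_3\cong S^1\times D^2\times[0,1]$, glued to $\partial(M_h\sharp s\overline{\mathbb{CP}^2}/\overline{\omega})$ along the solid torus $S^1\times D^2\times\{0\}$. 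This is exactly an external collar on the solid torus $S^1\times(\nu c/\iota)$, so attaching it leaves the diffeomorphism type unchanged and yields $(M_h\cup M_r)\sharp s\overline{\mathbb{CP}^2}/\overline{\omega}\cong D^2\times S^2\sharp 2s\overline{\mathbb{CP}^2}$.

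The main obstacle is the bookkeeping across the four cases of Step $3$: the $s$-dependent rotations in the attaching maps of Cases $3.2$ and $3.4$ and the twisting of the round handle alter both the gluing $\sim$ and the branch locus (an annulus in the untwisted cases, a M\"obius band in the twisted ones), so I must check in each case that the squaring quotient still produces the trivial product $S^1\times D^3$ with its attaching region a hemisphere. The delicate point is to confirm that the quotient is glued as a collar rather than as a genuine round or $2$-handle; here the fact that the branch locus passes through the attaching region, through $v_1$ and $v_2$, is what collapses the would-be thickened torus $T^2\times[-1,1]$ to the solid torus $S^1\times D^2$ and forces the collar structure. Smoothing the corners produced along $\partial(\nu c/\iota)$ is then routine.
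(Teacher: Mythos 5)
Your argument is correct, but it organizes the computation differently from the paper. The paper splits the round handle into two pieces $R_1=[0,\frac12]\times D^2\times D^1$ and $R_2=[\frac12,1]\times D^2\times D^1$, shows each quotient $R_i/\overline{\omega}$ is a $D^4$ (branched double cover over an unknotted disk), and checks in each case that the attaching region descends to a single $3$-ball in $\partial D^4$ — so each attachment is a boundary connected sum with $D^4$ and changes nothing. You instead keep $R^2$ intact, compute $R^2/\overline{\omega}$ globally as a $D^3$-bundle over $S^1$, and identify the attaching region's quotient as the sub-bundle of boundary hemispheres, i.e.\ a solid torus forming ``half'' of $\partial(S^1\times D^3)\cong S^1\times S^2$, so that the whole attachment is an external collar. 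Both routes rest on the same slice computation (the quotient of $D^2\times D^1$ by $(z,t)\mapsto(\overline{z},-t)$ is a $3$-ball whose boundary splits into the image of $\partial D^2\times D^1$ and the image of $D^2\times\partial D^1$, each a disk). What the paper's chopping buys is that it never has to decide whether the bundles over $S^1$ are twisted: everything is reduced to contractible pieces, at the cost of the slightly fiddly verification that $D_0\cup D_1\cup D_2$ is again a $3$-ball for the second piece. Your global version gives a cleaner conceptual picture (the round handle literally disappears into a collar), but it genuinely needs the orientability argument you supply to rule out the twisted $D^3$- and $D^2$-bundles in Cases 3.3 and 3.4, and the identification $D^3\cong D^2\times[0,1]$ compatibly with the monodromy; you address both points, so the proof goes through.
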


\begin{proof}[Proof of Lemma \ref{lem:quotient_round}]

We can decompose $R^2$ into two components as follows: 
\[
R^2=\left[0,\frac{1}{2}\right]\times D^2\times D^1 \cup \left[\frac{1}{2},1\right]\times D^2\times D^1. 
\]
Denote $\left[0,\frac{1}{2}\right]\times D^2\times D^1$ and $\left[\frac{1}{2},1\right]\times D^2\times D^1$ by $R_1$ and $R_2$, respectively. 
It is easy to see that $R_i/\overline{\omega}$ is diffeomorphic to $D^4$ and 
$R_i$ is the double covering of $D^4\cong R_i/\overline{\omega}$ branched at the unknotted $2$-disk. 

The attaching region of $R_1$ is equal to $[0,\frac{1}{2}]\times \partial D^2 \times D^1$. 
The quotient $[0,\frac{1}{2}]\times \partial D^2 \times D^1/\overline{\omega}$ is a $3$-ball in $\partial D^4 \cong \partial R_1$. 
Thus, we obtain: 
{\allowdisplaybreaks
\begin{align*}
(M_h\cup R_1)/\overline{\omega} & \cong M_h/\overline{\omega}\cup R_1/\overline{\omega} \\
& \cong D^2\times S^2 \sharp 2s \overline{\mathbb{CP}^2} \natural D^4 \\
& \cong D^2\times S^2 \sharp 2s \overline{\mathbb{CP}^2}. 
\end{align*}
}

\par

The attaching region of $R_2$ is equal to $\left[\frac{1}{2},1\right]\times \partial D^2 \times D^1 \cup \lbrace{\frac{1}{2},1\rbrace}\times D^2\times D^1$. 
The quotient $\left[\frac{1}{2},1\right]\times \partial D^2 \times D^1/\overline{\omega}$ is a $3$-ball $D_0$ in $\partial D^4 \cong \partial R_2$, 
while $\lbrace{\frac{1}{2},1\rbrace}\times D^2\times D^1/\overline{\omega}$ is a disjoint union of two $3$-balls $D_1\amalg D_2$ in $\partial D^4$. 
Both of the intersections $D_0\cap D_1$ and $D_0\cap D_2$ are $2$-disks in $\partial D_0$. 
Eventually, the attaching region of $R_2$ is a $3$-ball in $\partial D^4$. 
So we obtain $(M_h\cup R_1\cup R_2)/\overline{\omega}\cong D^2\times S^2 \sharp 2s \overline{\mathbb{CP}^2}$. 
This completes the proof of Lemma \ref{lem:quotient_round}. 

\end{proof}

It is easy to see that $D^2\times \Sigma_{g-1}/\overline{\omega}$ is difeomorphic to $D^2\times S^2$ 
and attached to $(M_h\cup M_r)/\overline{\omega}$ so that the following diagram commutes: 
\[
\begin{array}{rcccl}
(M_h\cup M_r)/\overline{\omega} \supset & S^1\times S^2 & \longrightarrow & \partial D^2\times S^2 & \subset D^2\times \Sigma_{g-1}/\overline{\omega} \\
&  \raisebox{.7em}{\rotatebox{-90}{$\longrightarrow$}} & &  \raisebox{.7em}{\rotatebox{-90}{$\longrightarrow$}} & \\
& S^1 & \longrightarrow & \partial D^2, &
\end{array}
\]
where the upper horizontal arrow in the diagram represents the attaching map, 
the lower horizontal arrow represents the identity map 
and vertical arrows represent the projection onto the first component 
(In other word, the attaching map is a bundle map as a $S^2$-bundle over $S^1$). 
In particular, we obtain: 
\[
M \sharp s \overline{\mathbb{CP}^2}/\overline{\omega} \cong S \sharp 2s\overline{\mathbb{CP}^2}. 
\]
It is obvious that the quotient map $/\overline{\omega}:M\sharp s\overline{\mathbb{CP}^2}\rightarrow S\sharp 2s \overline{\mathbb{CP}^2}$ 
is a double branched covering. 
Thus, we complete the proof of the statement (i) in Theorem \ref{main1}. 

\end{proof}

\begin{proof}[Proof of (ii) in Theorem \ref{main1}]

Let $F_h\subset M$ be a regular fiber in the higher side of $f$. 
It is easy to see that $F_h$ represents the same rational homology class of $M$ as the one $F$ represents. 
Let $\omega:M\rightarrow M$ be the involution constructed in the proof of (i) in Theorem \ref{main1}. 
If $f$ has no indefinite fold singularities, that is, $f$ is a Lefschetz fibration, 
then the $2$-dimensional part of the fixed point set $M^\omega$ of the involution $\omega$ is an orientable surface 
and the algebraic intersection number between this part and $F_h$ is equal to $2g+2$, especially is non-zero. 
So the statement (ii) in Theorem \ref{main1} holds. 

Suppose that $f$ has indefinite fold singularities. 
We first prove that $F_h$ represents a non-trivial rational homology class of $M_h\cup M_r$. 
To prove this, we construct an element $\mathcal{S}$ in the group $H_2(M_h\cup M_r,\partial (M_h\cup M_r);\mathbb{Q})$ 
such that $[F_h]\cdot \mathcal{S}\neq 0$.  
Let $\tilde{S}$ be the intersection between the $2$-dimensional part of $M^\omega$ and $M_h$, 
which is the union of compact oriented surfaces. 
We use the notations $H_1$, $c$, $v_1$, $v_2$ and $R^2$ as we use in the proof of (i) in Theorem \ref{main1}. 

\vspace{.5em}

\noindent
{\bf Case 1}: If $H_1$ preserves the orientation of $c$ and two points $v_1$ and $v_2$, 
then $R^2$ is untwisted and $\tilde{S}\cap R^2=\{(s,\pm 1, 0)\in R^2 \hspace{.3em} | \hspace{.3em} s\in[0,1]\}$ 
is a disjoint union of two circles. 
We define four annuli $A_1$, $A_2$, $A_3$ and $A_4$ as follows: 
{\allowdisplaybreaks
\begin{align*}
A_1 = & \{(s,t,0)\in R^2 \hspace{.3em} | \hspace{.3em} s\in [0,1], t\in [0,1]\}, \\
A_2 = & \{(s,t,0)\in R^2 \hspace{.3em} | \hspace{.3em} s\in [0,1], t\in [-1,0]\}, \\
A_3 = & \{(s,0,t)\in R^2 \hspace{.3em} | \hspace{.3em} s\in [0,1], t\in [0,1]\}, \\
A_4 = & \{(s,0,t)\in R^2 \hspace{.3em} | \hspace{.3em} s\in [0,1], t\in [-1,0]\}. 
\end{align*}
}
Then $S=\tilde{S}\cup A_1\cup A_2\cup A_3 \cup A_4$ represents the homology class of the pair $(M_h\cup M_r, \partial(M_h\cup M_r))$ 
after giving suitable orientations to the annuli $A_1$, $A_2$, $A_3$ and $A_4$. 
We denote this class by $\mathcal{S}$, 
then the intersection number $\mathcal{S}\cdot [F_h]$ is equal to $2g+2$, especially is non-zero. 

\vspace{.5em}

\noindent
{\bf Case 2}: If $H_1$ preserves the orientation of $c$ but does not preserve two points $v_1$ and $v_2$, 
then $R^2$ is untwisted and $\tilde{S}\cap R^2=\{(s,\pm\exp{(-\pi\sqrt{-1}s)}, 0)\in R^2 \hspace{.3em} | \hspace{.3em} s\in[0,1]\}$ 
is a circle. 
We define three annuli $A_5$, $A_6$ and $A_7$ as follows: 
{\allowdisplaybreaks
\begin{align*}
A_5 = & \{(s,t\exp{(-\pi\sqrt{-1}s)},0)\in R^2 \hspace{.3em} | \hspace{.3em} s\in [0,1], t\in [0,1]\} \\ 
& \cup \{(s,-t\exp{(-\pi\sqrt{-1}s)},0)\in R^2 \hspace{.3em} | \hspace{.3em} s\in [0,1], t\in [0,1]\}, \\
A_6 = & \{(s,0,t)\in R^2 \hspace{.3em} | \hspace{.3em} s\in [0,1], t\in [0,1]\}, \\
A_7 = & \{(s,0,t)\in R^2 \hspace{.3em} | \hspace{.3em} s\in [0,1], t\in [-1,0]\}. 
\end{align*}
}
Then $S=\tilde{S}\cup A_5\cup A_6\cup A_7$ represents the homology class of the pair $(M_h\cup M_r, \partial(M_h\cup M_r))$ 
after giving suitable orientations to the annuli $A_5$, $A_6$ and $A_7$. 
We denote this class by $\mathcal{S}$, 
then the intersection number $\mathcal{S}\cdot [F_h]$ is equal to $2g+2$, especially is non-zero. 

\vspace{.5em}

\noindent
{\bf Case 3}: If $H_1$ does not preserve the orientation of $c$ but preserves two points $v_1$ and $v_2$, 
then $R^2$ is twisted and $\tilde{S}\cap R^2=\{(s,\pm 1, 0)\in R^2 \hspace{.3em} | \hspace{.3em} s\in[0,1]\}$ 
is a disjoint union of two circles. 
We define three annuli $A_8$, $A_9$ and $A_{10}$ as follows: 
{\allowdisplaybreaks
\begin{align*}
A_8 = & \{(s,t,0)\in R^2 \hspace{.3em} | \hspace{.3em} s\in [0,1], t\in [0,1]\}, \\
A_9 = & \{(s,t,0)\in R^2 \hspace{.3em} | \hspace{.3em} s\in [0,1], t\in [-1,0]\}, \\
A_{10} = & \{(s,0,t)\in R^2 \hspace{.3em} | \hspace{.3em} s\in [0,1], t\in [0,1]\} \\
& \cup \{(s,0,t)\in R^2 \hspace{.3em} | \hspace{.3em} s\in [0,1], t\in [-1,0]\}. 
\end{align*}
}
Then $S=\tilde{S}\cup A_8 \cup A_9\cup A_{10}$ represents the homology class of the pair $(M_h\cup M_r, \partial(M_h\cup M_r))$ 
after giving suitable orientations to the annuli $A_8$, $A_9$ and $A_{10}$. 
We denote this class by $\mathcal{S}$, 
then the intersection number $\mathcal{S}\cdot [F_h]$ is equal to $2g+2$, especially is non-zero. 

\vspace{.5em}

\noindent
{\bf Case 4}: If $H_1$ preserves neither the orientation of $c$ nor two points $v_1$ and $v_2$, 
then $R^2$ is twisted and $\tilde{S}\cap R^2=\{(s,\pm\exp{(-\pi\sqrt{-1}s)}, 0)\in R^2 \hspace{.3em} | \hspace{.3em} s\in[0,1]\}$ 
is a circle. 
We define two annuli $A_{11}$ and $A_{12}$ as follows: 
{\allowdisplaybreaks
\begin{align*}
A_{11} = & \{(s,t\exp{(-\pi\sqrt{-1}s)},0)\in R^2 \hspace{.3em} | \hspace{.3em} s\in [0,1], t\in [0,1]\} \\ 
& \cup \{(s,-t\exp{(-\pi\sqrt{-1}s)},0)\in R^2 \hspace{.3em} | \hspace{.3em} s\in [0,1], t\in [0,1]\}, \\
A_{12} = & \{(s,0,t)\in R^2 \hspace{.3em} | \hspace{.3em} s\in [0,1], t\in [0,1]\}, \\
& \cup\{(s,0,t)\in R^2 \hspace{.3em} | \hspace{.3em} s\in [0,1], t\in [-1,0]\}. 
\end{align*}
}
Then $S=\tilde{S}\cup A_{11}\cup A_{12}$ represents the homology class of the pair $(M_h\cup M_r, \partial(M_h\cup M_r))$ 
after giving suitable orientations to the annuli $A_{11}$ and $A_{12}$. 
We denote this class by $\mathcal{S}$, 
then the intersection number $\mathcal{S}\cdot [F_h]$ is equal to $2g+2$, especially is non-zero. 

Eventually, we can construct the element $\mathcal{S}$ satisfying the desired condition in any cases. 
So we have proved $[F_h]\neq 0$ in $H_2(M_h\cup M_r;\mathbb{Q})$. 

We are now ready to prove the statement (ii) in Theorem \ref{main1}. 
There exists the following exact sequence which is the part of the Meyer-Vietoris exact sequence: 
\[
H_2(S^1\times \Sigma_{g-1};\mathbb{Q})\xrightarrow{i_1\oplus i_2} H_2(M_h\cup M_r;\mathbb{Q})\oplus H_2(D^2\times \Sigma_{g-1};\mathbb{Q})\xrightarrow{j_1-j_2} H_2(M;\mathbb{Q}). 
\]
Suppose that $(j_1-j_2)([F_h],0)=[F_h]=0$. 
Then there exists an element $\mu\in H_2(S^1\times \Sigma_{g-1};\mathbb{Q})$ such that $(i_1\oplus i_2)(\mu)=([F_h],0)$. 
By a K\"{u}nneth formula, we obtain the following isomorphism: 
\[
H_2(S^1\times \Sigma_{g-1};\mathbb{Q})\cong H_2(\Sigma_{g-1};\mathbb{Q})\oplus \left(H_1(\Sigma_{g-1};\mathbb{Q})\otimes H_1(S^1;\mathbb{Q}) \right). 
\]
Since the map $i_2:H_2(S^1\times \Sigma_{g-1};\mathbb{Q})\rightarrow H_2(D^2\times \Sigma_{g-1};\mathbb{Q})\cong H_2(\Sigma_{g-1};\mathbb{Q})$ 
is regarded as the projection onto the first component via the above isomorphism, 
$\mu$ is an element in $H_1(\Sigma_{g-1};\mathbb{Q})\otimes H_1(S^1;\mathbb{Q})$. 
The involution $\omega$ acts on the component $H_2(\Sigma_{g-1};\mathbb{Q})$ trivially 
and on the component $H_1(\Sigma_{g-1};\mathbb{Q})\otimes H_1(S^1;\mathbb{Q})$ by multiplying $-1$. 
So we obtain: 
\[
\omega_\ast(\mu)=-\mu. 
\]
$i_1\circ \omega_\ast$ is equal to $\omega_\ast\circ i_1$ since $i_1$ is induced by the inclusion map. 
Thus, we obtain:   
{\allowdisplaybreaks
\begin{align*}
[F_h] & = \omega_\ast([F_h]) \\
& = \omega_\ast\circ i_1(\mu) \\
& = i_1 \circ \omega_\ast (\mu) \\
& = i_1 \circ (-\mu) = -[F_h]. \\
\end{align*}
}
This means that $2[F_h]=0$ in $H_2(M_h\cup M_r;\mathbb{Q})$. 
This contradicts $[F_h]\neq 0$.
Therefore, we obtain $[F_h]\neq 0$ in $H_2(M;\mathbb{Q})$ and this completes the proof of the statement. 

\end{proof}

\begin{rem}

By the argument similar to that in the proof of Theorem \ref{main1}, 
we can generalize Theorem \ref{main1} to directed BLFs as follows: 

\begin{thm}\label{main1-directed}

Let $f:M\rightarrow S^2$ be a hyperelliptic directed BLF. 
Suppose that the genus of every fiber component of $f$ is greater than or equal to $2$. 

\begin{enumerate}[(i)]

\item 
Let $s_1$ be the number of Lefschetz singularities of $f$ whose vanishing cycles are separating. 
We define $s_2$ as follows: 
\[
s_2 = \text{max}\{s\in\mathbb{N} \hspace{.3em} | \hspace{.3em} \text{$f^{-1}(x)$ has $s$ components. $x\in S^2$}\}. 
\]
Then there exists an involution 
\[
\omega: M\rightarrow M
\]
such that the fixed point set of $\omega$ is the union of  (possibly nonorientable) surfaces and $s_1$ isolated points. 
Moreover, $\omega$ can be extended to an involution 
\[
\overline{\omega}:M\sharp s_1\overline{\mathbb{CP}^2}\rightarrow M\sharp s_1\overline{\mathbb{CP}^2}
\]
such that $M\sharp s_1\overline{\mathbb{CP}^2}/\overline{\omega}$ is diffeomorphic to $\sharp s_2S\sharp 2s_1\overline{\mathbb{CP}^2}$, 
where $S$ is $S^2$-bundle over $S^2$,  
and the quotient map 
\[
/\overline{\omega}:M\sharp s_1\overline{\mathbb{CP}^2}\rightarrow M\sharp s_1\overline{\mathbb{CP}^2}/\overline{\omega}\cong \sharp s_2S\sharp 2s_1\overline{\mathbb{CP}^2}
\]
is the double branched covering. 

\item Let $F\in M$ be a regular fiber of $f$. 
Then $F$ represents a non-trivial rational homology class of $M$. 

\end{enumerate}

\end{thm}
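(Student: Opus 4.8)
The plan is to run the argument of Theorem \ref{main1} essentially verbatim, replacing the single round-handle decomposition of an SBLF with the multi-round-handle decomposition of a directed BLF recorded at the end of Subsection \ref{subsec_handle}. Write $M=D^2\times(\Sigma_{g_1}\amalg\cdots\amalg\Sigma_{g_k})\cup(h^2_1\amalg\cdots\amalg h^2_{n_1})\cup(R^2_1\amalg\cdots\amalg R^2_{n_2})\cup D^2\times(\Sigma_{h_1}\amalg\cdots\amalg\Sigma_{h_m})$, where the $h^2_i$ are the Lefschetz $2$-handles (all lying over the disk around $p_n$ by condition (3) of Definition \ref{defn:directedBLF}) and each $R^2_j$ is the round $2$-handle obtained by crossing the indefinite fold circle $Z_j$. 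First I would set $\omega=\mathrm{id}\times\iota$ on the top piece $D^2\times(\Sigma_{g_1}\amalg\cdots\amalg\Sigma_{g_k})$, where $\iota$ is the disjoint union of the hyperelliptic involutions on the fiber components provided by the hyperelliptic hypothesis.

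Next I would extend $\omega$ over the Lefschetz $2$-handles exactly as in Step 2 (Cases 2.1 and 2.2) of the proof of Theorem \ref{main1}, using that each Lefschetz vanishing cycle is preserved by $\iota$, and then extend over the round handles one at a time. Crossing $R^2_j$, Lemmas \ref{lem:higher_monodromy} and \ref{lem:round_monodromy} supply an $\omega$-compatible lift of the boundary monodromy, and Lemma \ref{lemma:isomH_g(c)} lets me isotope that monodromy, through the centralizer $C(\iota)$, to a map preserving the fold vanishing cycle $d_j$; this is precisely where the hyperelliptic-directed hypothesis ``$d_j$ is preserved by $\iota_j$ up to isotopy'' is used. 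The hypothesis that every fiber component has genus $\geq2$ is exactly what is needed so that both the Birman--Hilden theorem (Theorem \ref{thm:hypMCG}) and Lemma \ref{lemma:isomH_g(c)} apply at every level of this induction, which is why it replaces the condition $g\geq3$ of Theorem \ref{main1}. Iterating across all $n_2$ round handles and finally setting $\omega=\mathrm{id}\times\iota$ on the bottom piece (Step 4) produces the desired involution, and the fixed-point analysis is identical to that of Theorem \ref{main1}: the $2$-handles with separating vanishing cycle contribute the $s_1$ isolated fixed points, and the remaining handles contribute (possibly nonorientable) surface pieces.

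For the quotient I would blow up at the $s_1$ isolated fixed points to obtain $\overline{\omega}$ and then quotient piece by piece via Lemmas \ref{lem:quotient_nonsep}, \ref{lem:quotient_sep}, and \ref{lem:quotient_round}: each factor $D^2\times\Sigma_{g_j}$ descends to $D^2\times S^2$ because $\Sigma_{g_j}/\iota\cong S^2$, each non-separating Lefschetz handle descends trivially, each blown-up separating handle contributes $\sharp 2\overline{\mathbb{CP}^2}$, and each round handle is treated as in Lemma \ref{lem:quotient_round}. The one genuinely new piece of bookkeeping, which I expect to be the main obstacle, is identifying the quotient base: since the fibers may be disconnected, each of the up to $s_2$ fiber components quotients to a separate $S^2$, and one must check that reassembling these sphere pieces across the round-handle quotients yields exactly the connected sum $\sharp s_2 S$ of $s_2$ copies of an $S^2$-bundle over $S^2$, with $s_2$ the maximal number of fiber components. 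This is where the constant $s_2$ enters, generalizing the single copy of $S$ in Theorem \ref{main1}; tracking how the component count and the round-handle quotients interact is the delicate step.

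Finally, statement (ii) is proved exactly as in Theorem \ref{main1}. Writing $M'$ for the union of the top piece, the Lefschetz handles, and the round handles (the analog of $M_h\cup M_r$), I would use the $2$-dimensional part $\tilde S$ of the fixed-point set together with the auxiliary annuli inside each round handle (Cases 1--4) to build a relative class $\mathcal S\in H_2(M',\partial M';\mathbb{Q})$ with $\mathcal S\cdot[F_h]=2g_j+2\neq0$ for a top fiber component $F_h$ of genus $g_j$, giving $[F_h]\neq0$ in $H_2(M';\mathbb{Q})$. The Mayer--Vietoris sequence for $M'$ and $D^2\times(\Sigma_{h_1}\amalg\cdots\amalg\Sigma_{h_m})$, glued along $S^1\times(\Sigma_{h_1}\amalg\cdots\amalg\Sigma_{h_m})$, together with the fact that $\omega_\ast$ acts by $-1$ on the summand $H_1(\Sigma;\mathbb{Q})\otimes H_1(S^1;\mathbb{Q})$, then forces $2[F_h]=0$ in $H_2(M';\mathbb{Q})$ unless $[F]\neq0$ in $H_2(M;\mathbb{Q})$, which completes the argument.
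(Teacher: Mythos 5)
Your proposal is correct and takes essentially the same approach as the paper: the paper itself offers no separate proof of Theorem \ref{main1-directed}, only the remark that it follows ``by the argument similar to that in the proof of Theorem \ref{main1},'' and your plan is a faithful (indeed more detailed) expansion of exactly that argument, correctly locating the only new bookkeeping in the multi-round-handle decomposition and the identification of the quotient base as $\sharp s_2 S\sharp 2s_1\overline{\mathbb{CP}^2}$.
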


\end{rem}


\section{A genus-$2$ SBLF structure on $\sharp n \mathbb{CP}^2$ for $n\geq 0$}\label{section:genus-2}

In this section, we prove Theorem \ref{main2}. 
Let $\tilde{c}_{1,1}, \tilde{c}_{1,2}, \tilde{c}_2, \tilde{c}_3, \tilde{c}_4, \tilde{c}_5\subset \Sigma_{2,1}$ be simple closed curves described as in Figure \ref{scc21}.

\begin{figure}[htbp]
\begin{center}
\includegraphics[width=50mm]{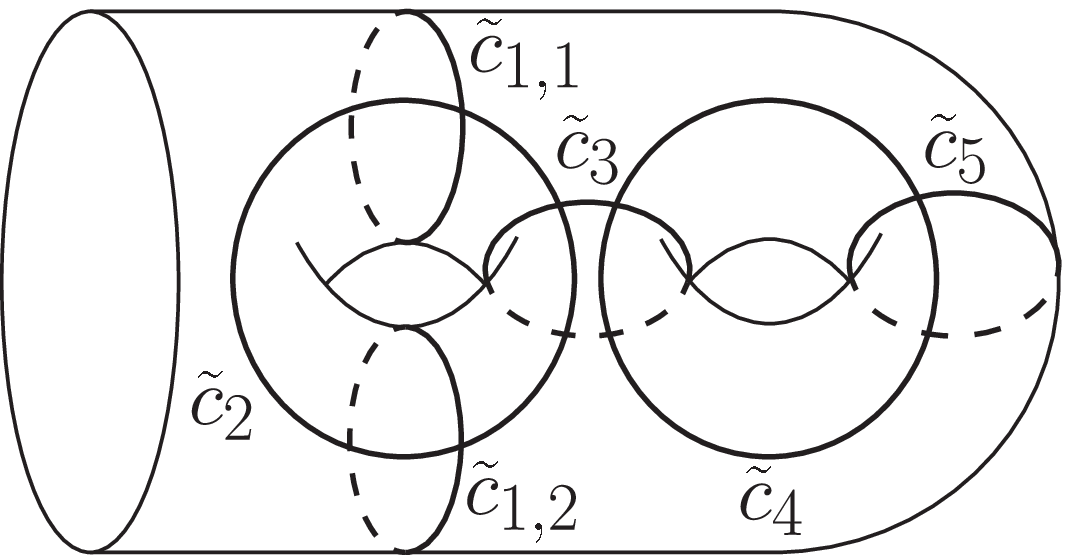}
\end{center}
\caption{}
\label{scc21}
\end{figure} 

\noindent
We denote by $\tilde{t}_{1,1},\tilde{t}_{1,2},\tilde{t}_2,\tilde{t}_3,\tilde{t}_4,\tilde{t}_5\in\mathcal{M}_{2,1}$ 
right-handed Dehn twists along the simple closed curves $\tilde{c}_{1,1}, \tilde{c}_{1,2}, \tilde{c}_2, \tilde{c}_3, \tilde{c}_4, \tilde{c}_5$, respectively. 
We define the simple closed curves $\alpha_i, \beta_j, \gamma_k\subset \Sigma_{2,1}$ as follows: 
\begin{align*}
\tilde{\alpha}_i & = ({\tilde{t}_5}^2\cdot {\tilde{t}_4}^{-i+1})(\tilde{c}_4) & \text{($i\in\mathbb{Z}$)}, \\
\tilde{\beta}_j & = {\tilde{t}_4}^{-j}(\tilde{c}_5) & \text{($j\in\mathbb{Z}$)}, \\
\tilde{\gamma}_1 & = (\tilde{t}_2\cdot \tilde{t}_3 \cdot \tilde{t}_4 \cdot {\tilde{t}_5}^2)(\tilde{c}_{1,1}), \\
\tilde{\gamma}_2 & = (\tilde{t}_2 \cdot \tilde{t}_3 \cdot \tilde{t}_4)(\tilde{c}_{1,1}).
\end{align*}
These curves are described as in Figure \ref{scc21-2}. 

\begin{figure}[htbp]
\begin{center}
\includegraphics[width=115mm]{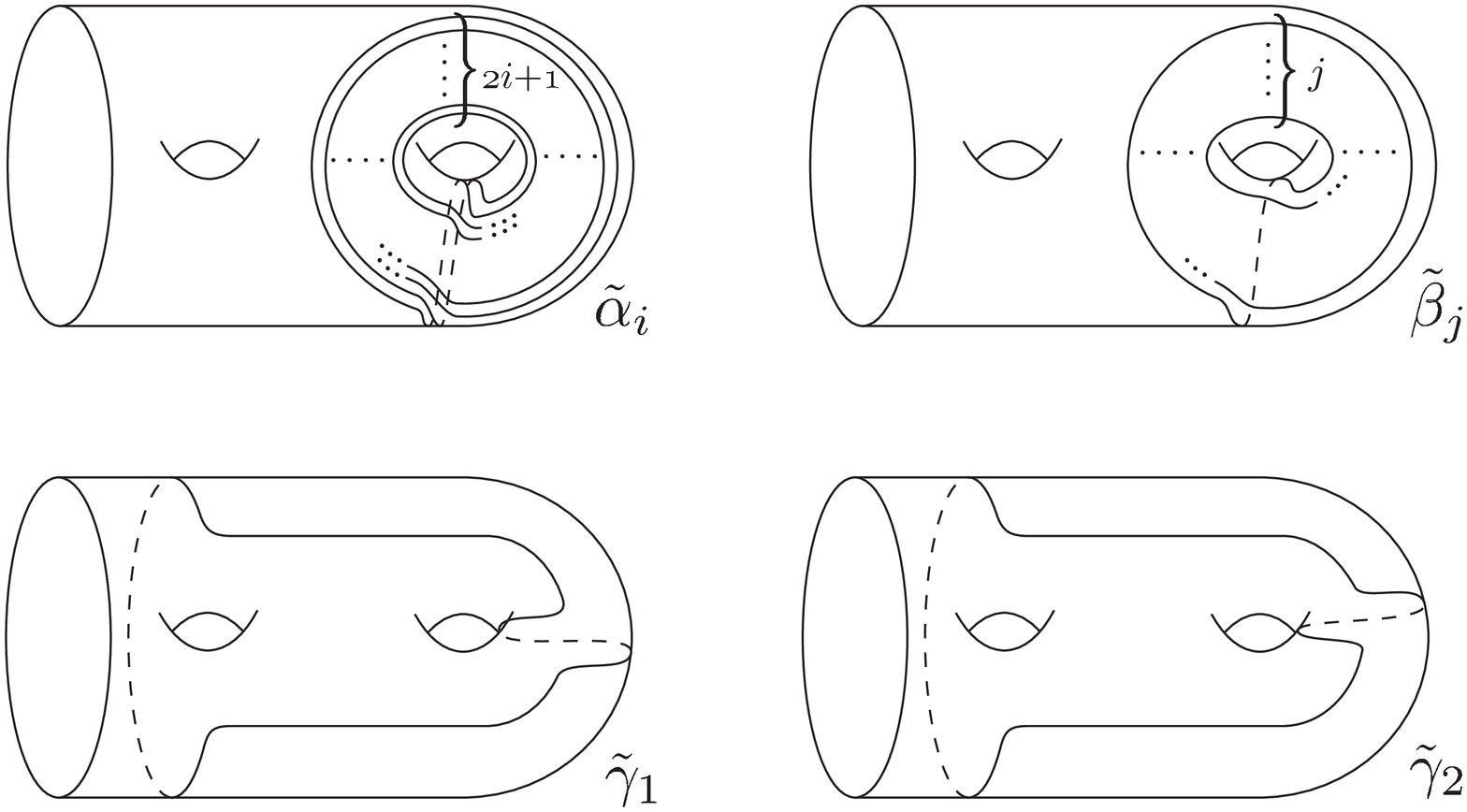}
\end{center}
\caption{}
\label{scc21-2}
\end{figure} 

\noindent
We also define elements $\tilde{\xi}, \tilde{\iota}_2\in\mathcal{M}_{2,1}$ as follows: 
\begin{align*}
\tilde{\xi} & =(\tilde{t}_4 \cdot \tilde{t}_5)^3, \\
\tilde{\iota}_2 & = (\tilde{t}_5 \cdot \tilde{t}_4 \cdot \tilde{t}_3 \cdot \tilde{t}_2)^5 \\
& = \tilde{t}_{1,1} \cdot \tilde{t}_2 \cdot \tilde{t}_3 \cdot \tilde{t}_4 \cdot {\tilde{t}_5}^2 \cdot \tilde{t}_4 \cdot \tilde{t}_3 \cdot \tilde{t}_2 \cdot \tilde{t}_{1,1}. 
\end{align*}

\noindent
We remark that the element $\tilde{\iota}_2$ is a lift of the hyperelliptic involution $\iota_2\in\mathcal{M}_2$
and that the element $\tilde{\xi}$ is a lift of the element $\xi\in\mathcal{M}_2$ which is represented by the element described as in Figure \ref{rep-xi}. 

\begin{figure}[htbp]
\begin{center}
\includegraphics[width=115mm]{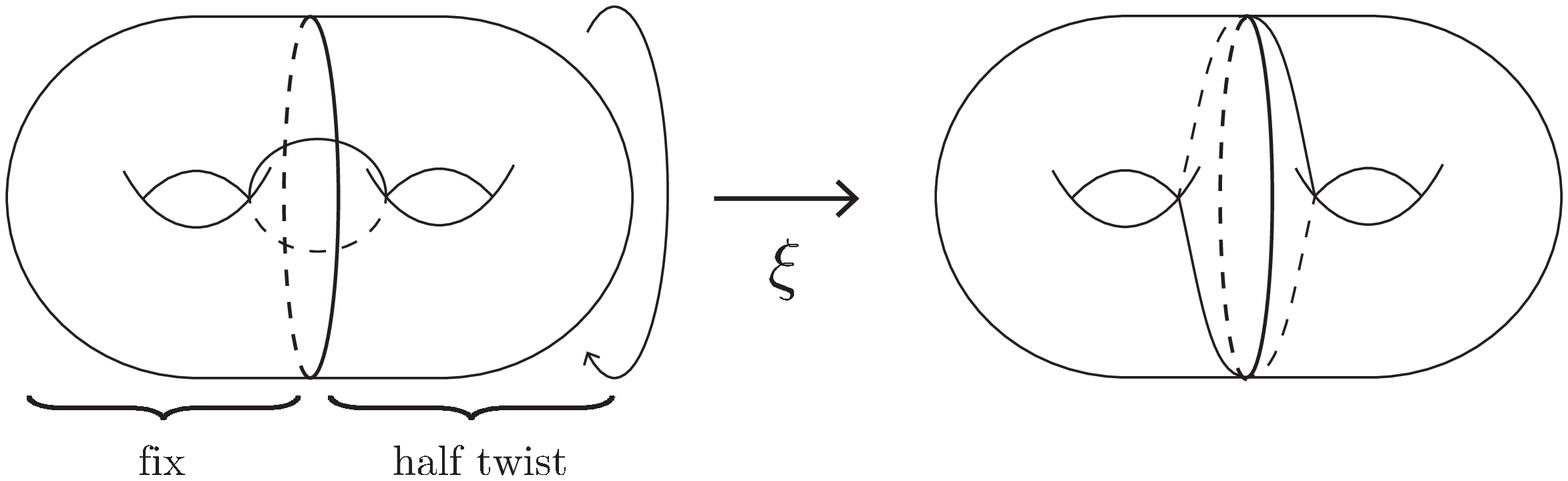}
\end{center}
\caption{}
\label{rep-xi}
\end{figure}

\noindent
We can easily obtain the following formulas: 
\begin{align*}
\tilde{\iota}_2 \cdot \tilde{t}_j & = \tilde{t}_j \cdot \tilde{\iota}_2 & \text{($j=2,3,4,5$)} ,\\
\tilde{\iota}_2 \cdot \tilde{t}_{1,1}  & = \tilde{t}_{1,2} \cdot \tilde{\iota}_2 .
\end{align*}

\begin{lem}\label{lem-gamma}

$t_{\tilde{\gamma}_1}\cdot t_{\tilde{\gamma}_2}= \tilde{\iota}_2 \cdot {\tilde{t}_5}^{-2}\cdot {\tilde{t}_{1,1}}^{-1}\cdot {\tilde{t}_{2}}^{-1}\cdot 
{\tilde{t}_{3}}^{-2}\cdot {\tilde{t}_{2}}^{-1}\cdot {\tilde{t}_{1,1}}^{-1} \in \Ker{(\Phi_{\tilde{c}_5}:\mathcal{M}_{2,1}(\tilde{c}_5)\rightarrow \mathcal{M}_{1,1})}$, 

\noindent
where $\mathcal{M}_{2,1}(\tilde{c}_5)$ is the subgroup of $\mathcal{M}_{2,1}$ which is defined as follows: 
\[
\mathcal{M}_{2,1}(\tilde{c}_5)=\{[T]\in\mathcal{M}_{2,1}|T(\tilde{c}_5)=\tilde{c}_5\}. 
\]

\end{lem}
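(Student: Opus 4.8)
The plan is to prove the two assertions in turn: the word identity in $\mathcal{M}_{2,1}$, and then the membership in $\Ker\Phi_{\tilde{c}_5}$. For the identity I would start from the conjugation formula $t_{\phi(a)}=\phi\, t_a\,\phi^{-1}$ for Dehn twists. Writing $P=\tilde{t}_2\tilde{t}_3\tilde{t}_4\tilde{t}_5^2$ and $Q=\tilde{t}_2\tilde{t}_3\tilde{t}_4$, and using $t_{\tilde{c}_{1,1}}=\tilde{t}_{1,1}$, this gives $t_{\tilde{\gamma}_1}=P\tilde{t}_{1,1}P^{-1}$ and $t_{\tilde{\gamma}_2}=Q\tilde{t}_{1,1}Q^{-1}$. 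Since $P^{-1}Q=\tilde{t}_5^{-2}$, the product collapses to
\[
t_{\tilde{\gamma}_1}\cdot t_{\tilde{\gamma}_2}=\tilde{t}_2\tilde{t}_3\tilde{t}_4\,\tilde{t}_5^2\,\tilde{t}_{1,1}\,\tilde{t}_5^{-2}\,\tilde{t}_{1,1}\,\tilde{t}_4^{-1}\tilde{t}_3^{-1}\tilde{t}_2^{-1},
\]
which is now a word purely in the chain twists.

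Next I would recast the right-hand side in a transparent form. Substituting the palindromic expression $\tilde{\iota}_2=\tilde{t}_{1,1}\tilde{t}_2\tilde{t}_3\tilde{t}_4\tilde{t}_5^2\tilde{t}_4\tilde{t}_3\tilde{t}_2\tilde{t}_{1,1}$ and recognizing the trailing negative block as an inverse, one has verbatim
\[
\tilde{\iota}_2\,\tilde{t}_5^{-2}\,\tilde{t}_{1,1}^{-1}\tilde{t}_2^{-1}\tilde{t}_3^{-2}\tilde{t}_2^{-1}\tilde{t}_{1,1}^{-1}=\tilde{\iota}_2\,\tilde{t}_5^{-2}\,\tilde{\iota}_1^{-1},\qquad \tilde{\iota}_1:=\tilde{t}_{1,1}\tilde{t}_2\tilde{t}_3^2\tilde{t}_2\tilde{t}_{1,1},
\]
where $\tilde{\iota}_1$ is the lift of the genus-one hyperelliptic involution carried by the subsurface spanned by $\tilde{c}_{1,1},\tilde{c}_2,\tilde{c}_3$. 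It then remains to prove $t_{\tilde{\gamma}_1}t_{\tilde{\gamma}_2}=\tilde{\iota}_2\tilde{t}_5^{-2}\tilde{\iota}_1^{-1}$, which I would do by reducing both sides to a common normal form using the braid relations $\tilde{t}_a\tilde{t}_b\tilde{t}_a=\tilde{t}_b\tilde{t}_a\tilde{t}_b$ for curves meeting once and the commutation relations $\tilde{t}_a\tilde{t}_b=\tilde{t}_b\tilde{t}_a$ for disjoint curves, all read off from Figures \ref{scc21} and \ref{scc21-2}, together with the supplied relations $\tilde{\iota}_2\tilde{t}_j=\tilde{t}_j\tilde{\iota}_2$ $(j=2,3,4,5)$. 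I expect this word manipulation to be the main obstacle: the relevant configuration is \emph{not} a linear chain (otherwise $\tilde{t}_5^2$ would fix $\tilde{c}_{1,1}$ and force $\tilde{\gamma}_1=\tilde{\gamma}_2$), so disentangling the term $\tilde{t}_5^2\tilde{t}_{1,1}\tilde{t}_5^{-2}$ and matching it against the palindrome requires the precise, figure-specific intersection data rather than a generic chain calculation.

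Finally, for the kernel statement I would exploit the reformulation $t_{\tilde{\gamma}_1}t_{\tilde{\gamma}_2}=\tilde{\iota}_2\tilde{t}_5^{-2}\tilde{\iota}_1^{-1}$, which requires no relations at all. Each of the three factors lies in $\mathcal{M}_{2,1}(\tilde{c}_5)$: the twist $\tilde{t}_5$ fixes its own core curve, while $\tilde{\iota}_2$ and $\tilde{\iota}_1$ are lifts of hyperelliptic involutions and so are represented by symmetric diffeomorphisms preserving the symmetric curve $\tilde{c}_5$. Since $\Phi_{\tilde{c}_5}$ is a homomorphism on this subgroup, I would compute
\[
\Phi_{\tilde{c}_5}\bigl(\tilde{\iota}_2\tilde{t}_5^{-2}\tilde{\iota}_1^{-1}\bigr)=\Phi_{\tilde{c}_5}(\tilde{\iota}_2)\,\Phi_{\tilde{c}_5}(\tilde{t}_5)^{-2}\,\Phi_{\tilde{c}_5}(\tilde{\iota}_1)^{-1}.
\]
Because $\tilde{c}_5$ is capped off in forming $\Sigma_{1,1}$, one has $\Phi_{\tilde{c}_5}(\tilde{t}_5)=1$; and both $\tilde{\iota}_2$ and $\tilde{\iota}_1$ descend to the hyperelliptic involution of the genus-one quotient surface, so $\Phi_{\tilde{c}_5}(\tilde{\iota}_2)=\Phi_{\tilde{c}_5}(\tilde{\iota}_1)$. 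Hence the image is trivial, proving that the product lies in $\Ker\Phi_{\tilde{c}_5}$.
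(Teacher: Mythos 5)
Your overall architecture (first establish the word identity, then pass to the kernel) matches the paper's, and your repackaging of the right-hand side as $\tilde{\iota}_2\,\tilde{t}_5^{-2}\,\tilde{\iota}_1^{-1}$ with $\tilde{\iota}_1=\tilde{t}_{1,1}\tilde{t}_2\tilde{t}_3^2\tilde{t}_2\tilde{t}_{1,1}$ is a nice way to organize the kernel step. Note, though, that your key claim there, $\Phi_{\tilde{c}_5}(\tilde{\iota}_2)=\Phi_{\tilde{c}_5}(\tilde{\iota}_1)$, is asserted rather than proved; the paper establishes exactly this by writing $\tilde{t}_4\tilde{t}_5^2\tilde{t}_4=\tilde{t}_5^{-1}\tilde{\xi}\tilde{t}_5^{-1}$ and using $\Phi_{\tilde{c}_5}(\tilde{\xi})=\Phi_{\tilde{c}_5}(\tilde{t}_5)=1$, so that computation (or an equivalent geometric argument) is still owed.

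The first half, however, contains a genuine error. The paper composes mapping classes left-to-right, so the conjugation formula is $t_{\phi(a)}=\phi^{-1}t_a\phi$; this is visible in the paper's own first displayed line, $t_{\tilde{\gamma}_1}=(\tilde{t}_2\tilde{t}_3\tilde{t}_4\tilde{t}_5^2)^{-1}\,\tilde{t}_{1,1}\,(\tilde{t}_2\tilde{t}_3\tilde{t}_4\tilde{t}_5^2)$. You use the opposite convention, $t_{\tilde{\gamma}_1}=P\tilde{t}_{1,1}P^{-1}$, and this is not a harmless relabelling. The curves $\tilde{c}_{1,1},\tilde{c}_2,\tilde{c}_3,\tilde{c}_4,\tilde{c}_5$ \emph{do} form a linear chain, with $\tilde{c}_{1,1}$ disjoint from $\tilde{c}_3,\tilde{c}_4,\tilde{c}_5$: that is precisely what makes the factorization $\tilde{\iota}_2=\tilde{t}_{1,1}\tilde{t}_2\tilde{t}_3\tilde{t}_4\tilde{t}_5^2\tilde{t}_4\tilde{t}_3\tilde{t}_2\tilde{t}_{1,1}$ and the commutations $\tilde{\iota}_2\tilde{t}_j=\tilde{t}_j\tilde{\iota}_2$ hold, and the paper's own calculation freely commutes $\tilde{t}_{1,1}$ past $\tilde{t}_3$ and $\tilde{t}_4$. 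Consequently $\tilde{t}_{1,1}$ and $\tilde{t}_5$ commute, and your expression $P\tilde{t}_{1,1}P^{-1}Q\tilde{t}_{1,1}Q^{-1}=Q\tilde{t}_5^2\tilde{t}_{1,1}\tilde{t}_5^{-2}\tilde{t}_{1,1}Q^{-1}$ collapses to $Q\tilde{t}_{1,1}^2Q^{-1}$, the square of a single Dehn twist, which cannot equal $t_{\tilde{\gamma}_1}t_{\tilde{\gamma}_2}$ for the two distinct curves of Figure \ref{scc21-2}. You noticed this tension but resolved it in the wrong direction: the configuration is a chain, and it is the order of application that must be reversed ($\tilde{t}_2$ acts on $\tilde{c}_{1,1}$ first, so its image already meets $\tilde{c}_5$ by the time $\tilde{t}_5^2$ is applied). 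The correct starting point is $\tilde{t}_5^{-2}\tilde{t}_4^{-1}\tilde{t}_3^{-1}\tilde{t}_2^{-1}\,\tilde{t}_{1,1}\,\tilde{t}_2\tilde{t}_3\tilde{t}_4\tilde{t}_5^2\cdot\tilde{t}_4^{-1}\tilde{t}_3^{-1}\tilde{t}_2^{-1}\,\tilde{t}_{1,1}\,\tilde{t}_2\tilde{t}_3\tilde{t}_4$, into which the paper substitutes $\tilde{t}_{1,1}\tilde{t}_2\tilde{t}_3\tilde{t}_4\tilde{t}_5^2=\tilde{\iota}_2\tilde{t}_{1,1}^{-1}\tilde{t}_2^{-1}\tilde{t}_3^{-1}\tilde{t}_4^{-1}$ before finishing with braid and commutation relations; that computation cannot be recovered from your starting expression.
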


\begin{proof}

We first prove the equation by direct calculation. 
By the definitions of $\tilde{\gamma}_1$ and $\tilde{\gamma}_2$, 
we obtain: 
\begin{align*}
t_{\tilde{\gamma}_1} & = (\tilde{t}_2\cdot \tilde{t}_3 \cdot \tilde{t}_4 \cdot {\tilde{t}_5}^2)^{-1} \cdot \tilde{t}_{1,1} \cdot 
 (\tilde{t}_2\cdot \tilde{t}_3 \cdot \tilde{t}_4 \cdot {\tilde{t}_5}^2) ,\\
t_{\tilde{\gamma}_2} & = (\tilde{t}_2\cdot \tilde{t}_3 \cdot \tilde{t}_4)^{-1} \cdot \tilde{t}_{1,1} \cdot 
 (\tilde{t}_2\cdot \tilde{t}_3 \cdot \tilde{t}_4) .\\
\end{align*}
So we can calculate $t_{\tilde{\gamma}_1}\cdot t_{\tilde{\gamma}_2}$ as follows: 
{\allowdisplaybreaks
\begin{align*}
t_{\tilde{\gamma}_1}\cdot t_{\tilde{\gamma}_2} & = (\tilde{t}_2\cdot \tilde{t}_3 \cdot \tilde{t}_4 \cdot {\tilde{t}_5}^2)^{-1} \cdot \tilde{t}_{1,1} \cdot 
(\tilde{t}_2\cdot \tilde{t}_3 \cdot \tilde{t}_4 \cdot {\tilde{t}_5}^2) \cdot
(\tilde{t}_2\cdot \tilde{t}_3 \cdot \tilde{t}_4)^{-1} \cdot \tilde{t}_{1,1} \cdot 
(\tilde{t}_2\cdot \tilde{t}_3 \cdot \tilde{t}_4) \\
& = {\tilde{t}_5}^{-2} \cdot {\tilde{t}_4}^{-1} \cdot {\tilde{t}_3}^{-1} \cdot {\tilde{t}_2}^{-1} \cdot (\tilde{t}_{1,1} \cdot 
\tilde{t}_2\cdot \tilde{t}_3 \cdot \tilde{t}_4 \cdot {\tilde{t}_5}^2) \cdot
{\tilde{t}_4}^{-1} \cdot {\tilde{t}_3}^{-1} \cdot {\tilde{t}_2}^{-1} \cdot \tilde{t}_{1,1} \cdot 
\tilde{t}_2\cdot \tilde{t}_3 \cdot \tilde{t}_4 \\
& = {\tilde{t}_5}^{-2} \cdot {\tilde{t}_4}^{-1} \cdot {\tilde{t}_3}^{-1} \cdot {\tilde{t}_2}^{-1} \cdot 
(\tilde{\iota}_2 \cdot {\tilde{t}_{1,1}}^{-1} \cdot {\tilde{t}_2}^{-1} \cdot {\tilde{t}_3}^{-1} \cdot {\tilde{t}_4}^{-1}) \cdot
{\tilde{t}_4}^{-1} \cdot {\tilde{t}_3}^{-1} \cdot {\tilde{t}_2}^{-1} \cdot \tilde{t}_{1,1} \cdot 
\tilde{t}_2\cdot \tilde{t}_3 \cdot \tilde{t}_4 \\
& = \tilde{\iota}_2 \cdot {\tilde{t}_5}^{-2} \cdot {\tilde{t}_4}^{-1} \cdot {\tilde{t}_3}^{-1} \cdot ({\tilde{t}_2}^{-1} \cdot 
{\tilde{t}_{1,1}}^{-1} \cdot {\tilde{t}_2}^{-1}) \cdot {\tilde{t}_3}^{-1} \cdot {\tilde{t}_4}^{-2} 
\cdot {\tilde{t}_3}^{-1} \cdot ({\tilde{t}_2}^{-1} \cdot \tilde{t}_{1,1} \cdot 
\tilde{t}_2) \cdot \tilde{t}_3 \cdot \tilde{t}_4 \\
& = \tilde{\iota}_2 \cdot {\tilde{t}_5}^{-2} \cdot {\tilde{t}_4}^{-1} \cdot {\tilde{t}_3}^{-1} \cdot ({\tilde{t}_{1,1}}^{-1} \cdot 
{\tilde{t}_2}^{-1} \cdot {\tilde{t}_{1,1}}^{-1}) \cdot {\tilde{t}_3}^{-1} \cdot {\tilde{t}_4}^{-2} 
\cdot {\tilde{t}_3}^{-1} \cdot (\tilde{t}_{1,1} \cdot 
\tilde{t}_2 \cdot {\tilde{t}_{1,1}}^{-1}) \cdot \tilde{t}_3 \cdot \tilde{t}_4 \\
& = \tilde{\iota}_2 \cdot {\tilde{t}_5}^{-2} \cdot {\tilde{t}_{1,1}}^{-1} \cdot {\tilde{t}_4}^{-1} \cdot ({\tilde{t}_3}^{-1} \cdot  
{\tilde{t}_2}^{-1} \cdot {\tilde{t}_3}^{-1}) \cdot {\tilde{t}_4}^{-2} 
\cdot ({\tilde{t}_3}^{-1} \cdot \tilde{t}_2 \cdot \tilde{t}_3) \cdot \tilde{t}_4 \cdot {\tilde{t}_{1,1}}^{-1} \\
& = \tilde{\iota}_2 \cdot {\tilde{t}_5}^{-2} \cdot {\tilde{t}_{1,1}}^{-1} \cdot {\tilde{t}_4}^{-1} \cdot ({\tilde{t}_2}^{-1} \cdot  
{\tilde{t}_3}^{-1} \cdot {\tilde{t}_2}^{-1}) \cdot {\tilde{t}_4}^{-2} 
\cdot (\tilde{t}_2 \cdot \tilde{t}_3 \cdot {\tilde{t}_2}^{-1}) \cdot \tilde{t}_4 \cdot {\tilde{t}_{1,1}}^{-1} \\
& = \tilde{\iota}_2 \cdot {\tilde{t}_5}^{-2} \cdot {\tilde{t}_{1,1}}^{-1} \cdot {\tilde{t}_2}^{-1} \cdot ({\tilde{t}_4}^{-1} \cdot  
{\tilde{t}_3}^{-1} \cdot {\tilde{t}_4}^{-1}) \cdot ({\tilde{t}_4}^{-1}
\cdot \tilde{t}_3 \cdot \tilde{t}_4) \cdot {\tilde{t}_2}^{-1} \cdot {\tilde{t}_{1,1}}^{-1} \\
& = \tilde{\iota}_2 \cdot {\tilde{t}_5}^{-2} \cdot {\tilde{t}_{1,1}}^{-1} \cdot {\tilde{t}_2}^{-1} \cdot ({\tilde{t}_3}^{-1} \cdot  
{\tilde{t}_4}^{-1} \cdot {\tilde{t}_3}^{-1}) \cdot (\tilde{t}_3 \cdot \tilde{t}_4 \cdot {\tilde{t}_3}^{-1}) \cdot {\tilde{t}_2}^{-1} \cdot {\tilde{t}_{1,1}}^{-1} \\
& = \tilde{\iota}_2 \cdot {\tilde{t}_5}^{-2} \cdot {\tilde{t}_{1,1}}^{-1} \cdot {\tilde{t}_2}^{-1} \cdot {\tilde{t}_3}^{-1} \cdot {\tilde{t}_3}^{-1} \cdot {\tilde{t}_2}^{-1} \cdot {\tilde{t}_{1,1}}^{-1}. 
\end{align*}
}

We next prove that the element $\tilde{\iota}_2 \cdot {\tilde{t}_5}^{-2} \cdot {\tilde{t}_{1,1}}^{-1} 
\cdot {\tilde{t}_2}^{-1} \cdot {\tilde{t}_3}^{-1} \cdot {\tilde{t}_3}^{-1} \cdot {\tilde{t}_2}^{-1} \cdot {\tilde{t}_{1,1}}^{-1}$
is contained in the kernel of $\Phi_{\tilde{c}_5}:\mathcal{M}_{2,1}(\tilde{c}_5) \rightarrow \mathcal{M}_{1,1}$. 
The elements $\tilde{\iota}_2, \tilde{t}_5$ and ${\tilde{t}_{1,1}}^{-1} 
\cdot {\tilde{t}_2}^{-1} \cdot {\tilde{t}_3}^{-1} \cdot {\tilde{t}_3}^{-1} \cdot {\tilde{t}_2}^{-1} \cdot {\tilde{t}_{1,1}}^{-1}$
are contained in the group $\mathcal{M}_{2,1}(\tilde{c}_5)$. 
It is obvious that $\Phi_{\tilde{c}_5}(\tilde{t}_5)=1$. 
We can calculate the product $\tilde{t}_4 \cdot {\tilde{t}_5}^2 \cdot \tilde{t}_4$ as follows: 
{\allowdisplaybreaks 
\begin{align}\label{4554}
\begin{split}
\tilde{t}_4 \cdot {\tilde{t}_5}^2 \cdot \tilde{t}_4 & ={\tilde{t}_5}^{-1} \cdot (\tilde{t}_5 \cdot \tilde{t}_4 \cdot \tilde{t}_5) \cdot 
(\tilde{t}_5 \cdot \tilde{t}_4 \cdot \tilde{t}_5) \cdot {\tilde{t}_5}^{-1} \\
& = {\tilde{t}_5}^{-1} \cdot (\tilde{t}_4 \cdot \tilde{t}_5 \cdot \tilde{t}_4) \cdot 
(\tilde{t}_5 \cdot \tilde{t}_4 \cdot \tilde{t}_5) \cdot {\tilde{t}_5}^{-1} \\
& = {\tilde{t}_5}^{-1} \cdot \tilde{\xi} \cdot {\tilde{t}_5}^{-1} 
\end{split}
\end{align}
}
Since $\tilde{\xi}\in \mathcal{M}_{2,1}(\tilde{c}_5)$ and $\Phi_{\tilde{c}_5}(\tilde{\xi})=1$, we obtain: 
{\allowdisplaybreaks
\begin{align*}
& \Phi_{\tilde{c}_5}(\tilde{\iota}_2 \cdot {\tilde{t}_5}^{-2} \cdot {\tilde{t}_{1,1}}^{-1} 
\cdot {\tilde{t}_2}^{-1} \cdot {\tilde{t}_3}^{-1} \cdot {\tilde{t}_3}^{-1} \cdot {\tilde{t}_2}^{-1} \cdot {\tilde{t}_{1,1}}^{-1}) \\
= & \Phi_{\tilde{c}_5}(\tilde{\iota}_2) \cdot \Phi_{\tilde{c}_5}({\tilde{t}_5}^{-2}) \cdot \Phi_{\tilde{c}_5}({\tilde{t}_{1,1}}^{-1} 
\cdot {\tilde{t}_2}^{-1} \cdot {\tilde{t}_3}^{-1} \cdot {\tilde{t}_3}^{-1} \cdot {\tilde{t}_2}^{-1} \cdot {\tilde{t}_{1,1}}^{-1}) \\
= & \Phi_{\tilde{c}_5}(\tilde{t}_{1,1} \cdot \tilde{t}_2 \cdot \tilde{t}_3 \cdot \tilde{t}_4 \cdot {\tilde{t}_5}^2 \cdot \tilde{t}_4 \cdot \tilde{t}_3 \cdot \tilde{t}_2 \cdot \tilde{t}_{1,1}) 
\cdot \Phi_{\tilde{c}_5}({\tilde{t}_{1,1}}^{-1} 
\cdot {\tilde{t}_2}^{-1} \cdot {\tilde{t}_3}^{-1} \cdot {\tilde{t}_3}^{-1} \cdot {\tilde{t}_2}^{-1} \cdot {\tilde{t}_{1,1}}^{-1}) \\
= & \Phi_{\tilde{c}_5}(\tilde{t}_{1,1} \cdot \tilde{t}_2 \cdot \tilde{t}_3) \cdot \Phi_{\tilde{c}_5}(\tilde{t}_4 \cdot {\tilde{t}_5}^2 \cdot \tilde{t}_4) \cdot 
\Phi_{\tilde{c}_5}(\tilde{t}_3 \cdot \tilde{t}_2 \cdot \tilde{t}_{1,1}) 
\cdot \Phi_{\tilde{c}_5}({\tilde{t}_{1,1}}^{-1} 
\cdot {\tilde{t}_2}^{-1} \cdot {\tilde{t}_3}^{-1} \cdot {\tilde{t}_3}^{-1} \cdot {\tilde{t}_2}^{-1} \cdot {\tilde{t}_{1,1}}^{-1}) \\
= & \Phi_{\tilde{c}_5}(\tilde{t}_{1,1} \cdot \tilde{t}_2 \cdot \tilde{t}_3) \cdot  
\Phi_{\tilde{c}_5}(\tilde{t}_3 \cdot \tilde{t}_2 \cdot \tilde{t}_{1,1}) 
\cdot \Phi_{\tilde{c}_5}({\tilde{t}_{1,1}}^{-1} 
\cdot {\tilde{t}_2}^{-1} \cdot {\tilde{t}_3}^{-1} \cdot {\tilde{t}_3}^{-1} \cdot {\tilde{t}_2}^{-1} \cdot {\tilde{t}_{1,1}}^{-1}) \\
= & 1. 
\end{align*}
}

This completes the proof of Lemma\ref{lem-gamma}.  

\end{proof}

\begin{lem}\label{lem-beta}

$t_{\tilde{\beta}_1} \cdot t_{\tilde{\beta}_{-1}} = \tilde{\xi} \cdot {\tilde{t}_5}^{-4} \in \Ker{\Phi_{\tilde{c}_5}}$. 

\end{lem}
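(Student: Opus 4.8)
The plan is to mimic the direct computation carried out in Lemma \ref{lem-gamma}, reducing everything to the braid relation $\tilde{t}_4\cdot\tilde{t}_5\cdot\tilde{t}_4=\tilde{t}_5\cdot\tilde{t}_4\cdot\tilde{t}_5$ and to the already-established equation (\ref{4554}). First I would rewrite the two Dehn twists as conjugates of $\tilde{t}_5$. Since $\tilde{\beta}_j={\tilde{t}_4}^{-j}(\tilde{c}_5)$, the same conjugation rule $t_{\phi(c)}=\phi^{-1}\cdot t_c\cdot\phi$ used for $\tilde{\gamma}_1,\tilde{\gamma}_2$ in Lemma \ref{lem-gamma} gives $t_{\tilde{\beta}_1}=\tilde{t}_4\cdot\tilde{t}_5\cdot{\tilde{t}_4}^{-1}$ and $t_{\tilde{\beta}_{-1}}={\tilde{t}_4}^{-1}\cdot\tilde{t}_5\cdot\tilde{t}_4$, so that the product becomes
\[
t_{\tilde{\beta}_1}\cdot t_{\tilde{\beta}_{-1}}=\tilde{t}_4\cdot\tilde{t}_5\cdot{\tilde{t}_4}^{-2}\cdot\tilde{t}_5\cdot\tilde{t}_4.
\]

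Next I would simplify this word using only the braid relation. From $\tilde{t}_4\cdot\tilde{t}_5\cdot\tilde{t}_4=\tilde{t}_5\cdot\tilde{t}_4\cdot\tilde{t}_5$ one extracts the two conjugation identities $\tilde{t}_4\cdot\tilde{t}_5\cdot{\tilde{t}_4}^{-1}={\tilde{t}_5}^{-1}\cdot\tilde{t}_4\cdot\tilde{t}_5$ and ${\tilde{t}_4}^{-1}\cdot\tilde{t}_5\cdot\tilde{t}_4=\tilde{t}_5\cdot\tilde{t}_4\cdot{\tilde{t}_5}^{-1}$. Grouping the product as $(\tilde{t}_4\cdot\tilde{t}_5\cdot{\tilde{t}_4}^{-1})\cdot({\tilde{t}_4}^{-1}\cdot\tilde{t}_5\cdot\tilde{t}_4)$ and applying the first identity to the left factor and the second to the right factor turns it into ${\tilde{t}_5}^{-1}\cdot(\tilde{t}_4\cdot{\tilde{t}_5}^2\cdot\tilde{t}_4)\cdot{\tilde{t}_5}^{-1}$. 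At this point equation (\ref{4554}) gives $\tilde{t}_4\cdot{\tilde{t}_5}^2\cdot\tilde{t}_4={\tilde{t}_5}^{-1}\cdot\tilde{\xi}\cdot{\tilde{t}_5}^{-1}$, and substituting yields ${\tilde{t}_5}^{-2}\cdot\tilde{\xi}\cdot{\tilde{t}_5}^{-2}$. Since $\tilde{\xi}=(\tilde{t}_4\cdot\tilde{t}_5)^3$ is central in the subgroup $\langle\tilde{t}_4,\tilde{t}_5\rangle$ (a standard consequence of the braid relation), it commutes with $\tilde{t}_5$, and the product collapses to $\tilde{\xi}\cdot{\tilde{t}_5}^{-4}$, as claimed.

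Finally, the membership $\tilde{\xi}\cdot{\tilde{t}_5}^{-4}\in\Ker{\Phi_{\tilde{c}_5}}$ is immediate from facts already verified in the proof of Lemma \ref{lem-gamma}: both $\tilde{\xi}$ and $\tilde{t}_5$ lie in $\mathcal{M}_{2,1}(\tilde{c}_5)$, and $\Phi_{\tilde{c}_5}(\tilde{\xi})=\Phi_{\tilde{c}_5}(\tilde{t}_5)=1$, whence $\Phi_{\tilde{c}_5}(\tilde{\xi}\cdot{\tilde{t}_5}^{-4})=\Phi_{\tilde{c}_5}(\tilde{\xi})\cdot\Phi_{\tilde{c}_5}(\tilde{t}_5)^{-4}=1$.

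I do not expect a serious obstacle here; unlike Lemma \ref{lem-gamma}, the word to be simplified is short. The only place demanding care is the middle step: one must apply the correctly conjugated forms of the braid relation, keeping the composition convention consistent with the rest of Section \ref{section:genus-2}, so that the product is massaged into precisely the shape ${\tilde{t}_5}^{-1}\cdot(\tilde{t}_4\cdot{\tilde{t}_5}^2\cdot\tilde{t}_4)\cdot{\tilde{t}_5}^{-1}$ that feeds into (\ref{4554}). A sign or exponent error there would produce ${\tilde{t}_4}^{-4}$ in place of ${\tilde{t}_5}^{-4}$, so this is the rewriting I would check most carefully.
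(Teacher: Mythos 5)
Your proposal is correct and follows essentially the same route as the paper: write $t_{\tilde{\beta}_1}=\tilde{t}_4\cdot\tilde{t}_5\cdot{\tilde{t}_4}^{-1}$ and $t_{\tilde{\beta}_{-1}}={\tilde{t}_4}^{-1}\cdot\tilde{t}_5\cdot\tilde{t}_4$, use the braid relation to bring the product to ${\tilde{t}_5}^{-1}\cdot(\tilde{t}_4\cdot{\tilde{t}_5}^2\cdot\tilde{t}_4)\cdot{\tilde{t}_5}^{-1}$, substitute (\ref{4554}) to get ${\tilde{t}_5}^{-2}\cdot\tilde{\xi}\cdot{\tilde{t}_5}^{-2}$, and commute $\tilde{\xi}$ past $\tilde{t}_5$; the kernel membership is handled identically. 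The only difference is that you make explicit the centrality of $\tilde{\xi}$ in $\langle\tilde{t}_4,\tilde{t}_5\rangle$, which the paper uses silently in its last line.
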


\begin{proof}

By the definitions of the curves $\tilde{\beta}_1,\tilde{\beta}_2$, we obtain: 
\begin{align*}
t_{\tilde{\beta}_1} & = \tilde{t}_4 \cdot \tilde{t}_5 \cdot {\tilde{t}_4}^{-1} , \\
t_{\tilde{\beta}_{-1}} & = {\tilde{t}_4}^{-1} \cdot \tilde{t}_5 \cdot \tilde{t}_4 .
\end{align*}
So we can calculate $t_{\tilde{\beta}_1} \cdot t_{\tilde{\beta}_{-1}}$ as follows: 
{\allowdisplaybreaks
\begin{align*}
t_{\tilde{\beta}_1} \cdot t_{\tilde{\beta}_{-1}} & = (\tilde{t}_4 \cdot \tilde{t}_5 \cdot {\tilde{t}_4}^{-1}) \cdot ({\tilde{t}_4}^{-1} \cdot \tilde{t}_5 \cdot \tilde{t}_4) \\
& = {\tilde{t}_5}^{-1} \cdot \tilde{t}_4 \cdot \tilde{t}_5 \cdot \tilde{t}_5 \cdot \tilde{t}_4 \cdot {\tilde{t}_5}^{-1} \\
& = {\tilde{t}_5}^{-2} \cdot \tilde{\xi} \cdot {\tilde{t}_5}^{-2}  \hspace{1em}\text{(by (\ref{4554}))} \\
& = \tilde{\xi} \cdot {\tilde{t}_5}^{-4}. 
\end{align*}
}
Since $\Phi_{\tilde{c}_5}(\tilde{t}_5)=1$ and $\Phi_{\tilde{c}_5}(\tilde{\xi})=1$, 
the element $\tilde{\xi} \cdot {\tilde{t}_5}^{-4}$ is contained in the kernel of $\Phi_{\tilde{c}_5}$. 

\end{proof}

\begin{lem}\label{lem-alphabeta}

For $s\geq 3$, 
\[
t_{\tilde{\alpha}_1} \cdot t_{\tilde{\alpha}_2} \cdot \cdots \cdot t_{\tilde{\alpha}_{s-2}} \cdot t_{\tilde{\beta}_{s-1}} \cdot t_{\tilde{\beta}_{-1}} 
= {\tilde{\xi}}^{s-1} \cdot {\tilde{t}_5}^{-5s+6} \in \Ker{\Phi_{\tilde{c}_5}}. 
\]

\end{lem}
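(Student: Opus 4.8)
The plan is to rewrite the entire product as a word in the two generators $a:=\tilde{t}_4$ and $b:=\tilde{t}_5$ and then reduce it using only three facts: the braid relation $aba=bab$ (valid because $\tilde{c}_4$ and $\tilde{c}_5$ meet once), the identity $\tilde{t}_4\cdot{\tilde{t}_5}^2\cdot\tilde{t}_4={\tilde{t}_5}^{-1}\cdot\tilde{\xi}\cdot{\tilde{t}_5}^{-1}$ from (\ref{4554}), and the fact that $\tilde{\xi}=(ab)^3$ is central in $\langle a,b\rangle$. The last point holds because $\langle\tilde{t}_4,\tilde{t}_5\rangle$ is a homomorphic image of the three-strand braid group $\langle\sigma_1,\sigma_2\mid\sigma_1\sigma_2\sigma_1=\sigma_2\sigma_1\sigma_2\rangle$, in which $(\sigma_1\sigma_2)^3$ is central; hence $\tilde{\xi}$ commutes with $\tilde{t}_5$, which is all I will actually use. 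Writing $h:=b^{-2}ab^2$, the conjugation formula for Dehn twists gives $t_{\tilde{\beta}_j}=a^{j}ba^{-j}$ (consistent with the expressions for $t_{\tilde{\beta}_{\pm1}}$ recorded in the proof of Lemma \ref{lem-beta}) and $t_{\tilde{\alpha}_i}=a^{i-1}ha^{-(i-1)}$.

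First I would telescope the $\tilde{\alpha}$-part. Since conjugation is a homomorphism,
\[
t_{\tilde{\alpha}_1}\cdot\cdots\cdot t_{\tilde{\alpha}_{s-2}}=\prod_{k=0}^{s-3}a^{k}ha^{-k}=(ha)^{s-3}\,h\,a^{-(s-3)}.
\]
Appending $t_{\tilde{\beta}_{s-1}}\cdot t_{\tilde{\beta}_{-1}}=a^{s-1}ba^{-(s-1)}\cdot a^{-1}ba$ and cancelling the intermediate powers of $a$ collapses the whole expression to $(ha)^{s-3}\,h\,a^{2}\,b\,a^{-s}\,b\,a$, which, using $ha^{2}=(ha)a$, I rewrite as $(ha)^{s-2}\cdot(aba^{-s}ba)$.

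The two computational identities that make everything collapse are (a) $ha=\tilde{\xi}\,{\tilde{t}_5}^{-4}$ and (b) $aba^{-s}ba=\tilde{\xi}\,{\tilde{t}_5}^{-s-2}$. For (a), I note $ha=b^{-2}(ab^2a)=b^{-2}\cdot b^{-1}\tilde{\xi}b^{-1}=\tilde{\xi}\,b^{-4}$ by (\ref{4554}) and centrality. For (b) I would induct on $s$: the braid relation is exactly equivalent to $aba^{-1}=b^{-1}ab$, which yields the recursion $aba^{-s}ba=b^{-1}\cdot aba^{-(s-1)}ba$, while the base case $aba^{-1}ba=(b^{-1}ab)\,ba=b^{-1}ab^{2}a=b^{-1}\cdot b^{-1}\tilde{\xi}b^{-1}=\tilde{\xi}\,b^{-3}$ again uses (\ref{4554}); hence $aba^{-s}ba=b^{-(s-1)}\tilde{\xi}\,b^{-3}=\tilde{\xi}\,b^{-s-2}$. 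Substituting (a) and (b) and collecting powers of $b=\tilde{t}_5$ past the central element $\tilde{\xi}$ gives
\[
(ha)^{s-2}\,(aba^{-s}ba)=(\tilde{\xi}\,{\tilde{t}_5}^{-4})^{s-2}\cdot\tilde{\xi}\,{\tilde{t}_5}^{-s-2}=\tilde{\xi}^{\,s-1}\,{\tilde{t}_5}^{-5s+6},
\]
which is the asserted value.

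Finally, membership in $\Ker\Phi_{\tilde{c}_5}$ is immediate once the identity is proved: the product is a word in $\tilde{\xi}$ and $\tilde{t}_5$, and both $\Phi_{\tilde{c}_5}(\tilde{\xi})=1$ and $\Phi_{\tilde{c}_5}(\tilde{t}_5)=1$ were already established in the proof of Lemma \ref{lem-gamma}. I expect the only real obstacle to be bookkeeping: verifying that the conjugating powers of $a$ telescope correctly, and checking at each step that centrality is invoked legitimately (one never needs more than that $\tilde{\xi}$ commutes with $\tilde{t}_5$). Once identities (a) and (b) are isolated, no genuinely difficult step remains.
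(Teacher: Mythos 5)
Your proposal is correct and is essentially the paper's own argument: both proofs reduce the claim to a computation in the subgroup generated by $\tilde{t}_4$ and $\tilde{t}_5$ using only the braid relation, the identity (\ref{4554}), and the fact that $\tilde{\xi}$ commutes with $\tilde{t}_5$ (which the paper also uses, implicitly, in its final collection of terms). The only difference is organizational --- the paper runs a recursion $P_s = P_{s-1}\cdot\bigl({t_{\tilde{\beta}_{-1}}}^{-1}\cdot {t_{\tilde{\beta}_{s-2}}}^{-1}\cdot t_{\tilde{\alpha}_{s-2}}\cdot t_{\tilde{\beta}_{s-1}}\cdot t_{\tilde{\beta}_{-1}}\bigr)$ with each increment evaluating to $\tilde{\xi}\cdot{\tilde{t}_5}^{-5}$, whereas you telescope the conjugates into $(ha)^{s-2}\cdot(aba^{-s}ba)$ and evaluate the two factors separately --- and your bookkeeping (including the kernel membership via $\Phi_{\tilde{c}_5}(\tilde{\xi})=\Phi_{\tilde{c}_5}(\tilde{t}_5)=1$) checks out.
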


\begin{proof}

We denote by $P_s\in \mathcal{M}_{2,1}$ the left side of the equation in the statement of Lemma \ref{lem-alphabeta}. 
Then the following equations hold: 
\begin{align*}
P_3 & = t_{\tilde{\beta}_1} \cdot t_{\tilde{\beta}_{-1}} \cdot ({t_{\tilde{\beta}_{-1}}}^{-1} \cdot {t_{\tilde{\beta}_1}}^{-1} \cdot 
t_{\tilde{\alpha}_1} \cdot t_{\tilde{\beta}_2} \cdot t_{\tilde{\beta}_{-1}}), \\
P_s & = P_{s-1} \cdot ({t_{\tilde{\beta}_{-1}}}^{-1} \cdot {t_{\tilde{\beta}_{s-2}}}^{-1} \cdot 
t_{\tilde{\alpha}_{s-2}} \cdot t_{\tilde{\beta}_{s-1}} \cdot t_{\tilde{\beta}_{-1}}) \hspace{1em}\text{($s\geq 4$)}. 
\end{align*}
So, by Lemma \ref{lem-beta}, it is sufficient to prove the following equation: 
\[
{t_{\tilde{\beta}_{-1}}}^{-1} \cdot {t_{\tilde{\beta}_{s-2}}}^{-1} \cdot 
t_{\tilde{\alpha}_{s-2}} \cdot t_{\tilde{\beta}_{s-1}} \cdot t_{\tilde{\beta}_{-1}}=\tilde{\xi} \cdot {\tilde{t}_5}^{-5} \hspace{1em} \text{($s \geq 3$)}. 
\]
We prove this equation by direct calculation. 
By the definitions of the curves $\alpha_i$ and $\beta_j$, we obtain: 
\begin{align*}
t_{\tilde{\alpha}_i} & = ({\tilde{t}_5}^{2} \cdot {\tilde{t}_4}^{-i+1})^{-1} \cdot \tilde{t}_4 \cdot {\tilde{t}_5}^{2} \cdot {\tilde{t}_4}^{-i+1} ,\\
t_{\tilde{\beta}_j} & = {\tilde{t}_4}^j \cdot \tilde{t}_5 \cdot {\tilde{t}_4}^{-j} . 
\end{align*}
So we can calculate ${t_{\tilde{\beta}_{-1}}}^{-1} \cdot {t_{\tilde{\beta}_{s-2}}}^{-1} \cdot 
t_{\tilde{\alpha}_{s-2}} \cdot t_{\tilde{\beta}_{s-1}} \cdot t_{\tilde{\beta}_{-1}}$ as follows: 
{\allowdisplaybreaks
\begin{align*}
& {t_{\tilde{\beta}_{-1}}}^{-1} \cdot {t_{\tilde{\beta}_{s-2}}}^{-1} \cdot 
t_{\tilde{\alpha}_{s-2}} \cdot t_{\tilde{\beta}_{s-1}} \cdot t_{\tilde{\beta}_{-1}} \\
= &  ({\tilde{t}_4}^{-1} \cdot {\tilde{t}_5}^{-1} \cdot \tilde{t}_4) \cdot ({\tilde{t}_4}^{s-2} \cdot {\tilde{t}_5}^{-1} \cdot {\tilde{t}_4}^{-s+2}) 
\cdot  (({\tilde{t}_5}^{2} \cdot {\tilde{t}_4}^{-s+3})^{-1} \cdot \tilde{t}_4 \cdot {\tilde{t}_5}^{2} \cdot {\tilde{t}_4}^{-s+3}) 
\cdot ({\tilde{t}_4}^{s-1} \cdot \tilde{t}_5 \cdot {\tilde{t}_4}^{-s+1}) \cdot ({\tilde{t}_4}^{-1} \cdot \tilde{t}_5 \cdot \tilde{t}_4) \\
= &  {\tilde{t}_4}^{-1} \cdot {\tilde{t}_5}^{-1} \cdot {\tilde{t}_4}^{s-1} \cdot {\tilde{t}_5}^{-1} \cdot {\tilde{t}_4}^{-s+2} 
\cdot  {\tilde{t}_4}^{s-3} \cdot {\tilde{t}_5}^{-2} \cdot \tilde{t}_4 \cdot {\tilde{t}_5}^{2} \cdot {\tilde{t}_4}^{2} 
\cdot \tilde{t}_5 \cdot {\tilde{t}_4}^{-s} \cdot \tilde{t}_5 \cdot \tilde{t}_4 \\
= &  ({\tilde{t}_4}^{-1} \cdot {\tilde{t}_5}^{-1} \cdot {\tilde{t}_4}^{s-1}) \cdot {\tilde{t}_5}^{-1} \cdot {\tilde{t}_4}^{-1} 
\cdot {\tilde{t}_5}^{-2} \cdot \tilde{t}_4 \cdot {\tilde{t}_5}^{2} \cdot {\tilde{t}_4}^{2} 
\cdot \tilde{t}_5 \cdot ({\tilde{t}_4}^{-s} \cdot \tilde{t}_5 \cdot \tilde{t}_4) \\
= &  ({\tilde{t}_5}^{s-1} \cdot {\tilde{t}_4}^{-1} \cdot {\tilde{t}_5}^{-1}) \cdot {\tilde{t}_5}^{-1} \cdot {\tilde{t}_4}^{-1} 
\cdot {\tilde{t}_5}^{-2} \cdot \tilde{t}_4 \cdot {\tilde{t}_5}^{2} \cdot {\tilde{t}_4}^{2} 
\cdot \tilde{t}_5 \cdot (\tilde{t}_5 \cdot \tilde{t}_4 \cdot {\tilde{t}_5}^{-s}) \\
= &  {\tilde{t}_5}^{s-1} \cdot ({\tilde{t}_4}^{-1} \cdot {\tilde{t}_5}^{-2} \cdot {\tilde{t}_4}^{-1}) 
\cdot {\tilde{t}_5}^{-2} \cdot (\tilde{t}_4 \cdot {\tilde{t}_5}^{2} \cdot \tilde{t}_4) \cdot (\tilde{t}_4 
\cdot {\tilde{t}_5}^2 \cdot \tilde{t}_4) \cdot {\tilde{t}_5}^{-s} \\
= &  {\tilde{t}_5}^{s-1} \cdot ({\tilde{t}_5}^{2} \cdot \tilde{\xi}^{-1}) 
\cdot {\tilde{t}_5}^{-2} \cdot ({\tilde{t}_5}^{-2} \cdot \tilde{\xi}) \cdot ({\tilde{t}_5}^{-2} \cdot \tilde{\xi}) \cdot {\tilde{t}_5}^{-s} \\
= &  {\tilde{\xi}}^{s-1} \cdot {\tilde{t}_5}^{-5s+6} .
\end{align*}
}
This completes the proof of Lemma \ref{lem-alphabeta}. 

\end{proof}

For each $s\geq 2$, we define a sequence $W_s$ of elements of the mapping class group $\mathcal{M}_2$ as follows: 
\begin{align*}
W_s & = \begin{cases}
(t_{{\beta}_1}, t_{{\beta}_{-1}}, t_{{\gamma}_1}, t_{{\gamma}_2}) & \text{($s=2$)}, \\
(t_{{\alpha}_{1}}, t_{{\alpha}_{2}}, \ldots , t_{{\alpha}_{s-2}}, t_{{\beta}_{s-1}}, t_{{\beta}_{-1}}, t_{{\gamma}_1}, t_{{\gamma}_2}) & \text{($s\geq 3$)},  
\end{cases}
\end{align*}
where $\alpha_i,\beta_j,\gamma_k\subset \Sigma_2$ are the images of $\tilde{\alpha}_i, \tilde{\beta}_j, \tilde{\gamma}_k \subset \Sigma_{2,1}$ 
by the natural inclusion $\Sigma_{2,1}\hookrightarrow \Sigma_2$. 
By Lemma \ref{lem-gamma}, \ref{lem-beta} and \ref{lem-alphabeta}, 
there exists a genus-$2$ SBLF $f_s:M_s \rightarrow S^2$ which has a section with self-intersection $0$ and satisfies $W_{f_s}=W_s$.  

\begin{lem}\label{fundamental}

$\pi_1(M_s)\cong \mathbb{Z}$. 
Moreover, a generator of the group $\pi_1(M_s)$ is represented by the simple closed curve $c_1$ in the regular fiber. 

\end{lem}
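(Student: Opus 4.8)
The plan is to read off a presentation of $\pi_1(M_s)$ from the handle decomposition of an SBLF given in Subsection \ref{subsec_handle}, and then to reduce this presentation to $\mathbb{Z}$. Recall that $M_s = D^2\times\Sigma_2 \cup (h^2_1\amalg\cdots) \cup R^2\cup D^2\times\Sigma_1$, where the $2$-handles are attached along the vanishing cycles occurring in $W_s$, the round $2$-handle is attached along the indefinite-fold vanishing cycle, and $D^2\times\Sigma_1$ is the lower side. By Lemma \ref{conditionHur} the products computed in Lemmas \ref{lem-gamma}, \ref{lem-beta} and \ref{lem-alphabeta} lie in $\Ker\Phi_{\tilde c_5}$, so the indefinite-fold vanishing cycle of $f_s$ is $c_5$, the image of $\tilde c_5$. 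Each Lefschetz $2$-handle kills the class of its attaching circle, the $2$-handle part of the round $2$-handle kills $[c_5]$, and the remaining $3$-handle together with the lower side introduce no new generators or relations (the base circle bounds a disc in the base $S^2$, and the generators of $\pi_1(\Sigma_1)$ already occur among those of $\pi_1(\Sigma_2)$). Hence van Kampen's theorem yields
\[
\pi_1(M_s)\cong \pi_1(\Sigma_2)/N,
\]
where $N$ is the normal closure of $[c_5]$ together with the classes of the curves appearing in $W_s$.

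Next I would fix standard generators $a_1,b_1,a_2,b_2$ of $\pi_1(\Sigma_2)=\langle a_1,b_1,a_2,b_2 \mid [a_1,b_1][a_2,b_2]\rangle$ adapted to the chain $c_1,c_2,c_3,c_4,c_5$ of Figure \ref{scc21}, so that $[c_1]=b_1$, $[c_2]=a_1$, $[c_4]=a_2$, $[c_5]=b_2$, with $c_k$ and $c_{k+1}$ meeting once. The curves in $W_s$ are then described by $\tilde\beta_j = \tilde t_4^{-j}(\tilde c_5)$, $\tilde\gamma_2=(\tilde t_2\tilde t_3\tilde t_4)(\tilde c_{1,1})$, $\tilde\gamma_1=(\tilde t_2\tilde t_3\tilde t_4\tilde t_5^{2})(\tilde c_{1,1})$ and $\tilde\alpha_i=(\tilde t_5^{2}\tilde t_4^{-i+1})(\tilde c_4)$, whose homotopy classes I read off from the action of the Dehn twists. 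The essential step is to observe that the relations $[c_5]=1$ and $[\beta_{-1}]=[t_{c_4}(c_5)]=1$ force $a_2=1$: since $c_4$ and $c_5$ meet in a single point, $t_{c_4}(c_5)$ is freely homotopic to a band sum of $c_4$ and $c_5$, so its class is conjugate to $a_2 b_2$, and killing both $b_2$ and $a_2 b_2$ yields $a_2=1$.

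Once $a_2=b_2=1$, the surface relation degenerates to $[a_1,b_1]=1$, so at this stage the quotient is already the abelian group generated by $a_1=[c_2]$ and $b_1=[c_1]$, and it suffices to compute in homology. Acting by the relevant twists on $H_1(\Sigma_2;\mathbb{Z})\cong\mathbb{Z}^4$ gives $[\gamma_2]=b_1-a_1$ (and $[\gamma_1]=b_1-a_1$ as well, while $[\alpha_i]=a_2+2b_2$ and the remaining $[\beta_j]=b_2+ja_2$ become trivial), so the relation $[\gamma_2]=1$ identifies $a_1$ with $b_1$. Therefore the quotient group is cyclic, generated by $b_1=[c_1]$; being cyclic it is abelian, hence equal to $H_1(M_s;\mathbb{Z})$. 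The residual homology computation — quotienting $\mathbb{Z}^4$ by $\{b_2,\ b_2-a_2,\ b_1-a_1\}$ — leaves exactly $\mathbb{Z}\langle[c_1]\rangle$, and we conclude $\pi_1(M_s)\cong\mathbb{Z}$ with generator represented by the curve $c_1$ in the regular fiber.

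The homology computation is routine, depending only on the intersection pattern of the chain. The main obstacle is the single genuinely non-abelian step, namely establishing $a_2=1$ in $\pi_1$ rather than merely in $H_1$: this requires translating the figure-defined curves $\beta_{\pm1}$ into honest based loops and controlling the conjugacy introduced by the transverse intersection of $c_4$ and $c_5$. Structuring the argument as ``first show the quotient is cyclic, then identify it with $H_1(M_s)$'' is precisely what isolates this delicate point and lets me avoid tracking every word in $\pi_1(\Sigma_2)$.
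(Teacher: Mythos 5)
Your argument is correct and follows essentially the same route as the paper: van Kampen reduces $\pi_1(M_s)$ to $\pi_1(\Sigma_2)$ modulo the normal closure of $c_5$ and the Lefschetz vanishing cycles, and the generators are then killed one at a time using $c_5$, $\beta_{-1}$ and a $\gamma$-curve. The only (harmless) difference is in the endgame: the paper reads off the free homotopy classes of $c_5$, $\beta_{-1}$, $\gamma_1$ as explicit words in four fixed loops $\lambda_1,\dots,\lambda_4$ and kills $\lambda_2,\lambda_3,\lambda_4$ outright, whereas you kill only $a_2,b_2$ in $\pi_1$, observe that the resulting quotient is already abelian, and finish with the homology class of $\gamma_2$ --- a slightly more robust way to treat the $\gamma$-curves, since only their Picard--Lefschetz images are needed.
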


\begin{proof}

Since $f_s$ has a section, 
it can be shown by Van Kampen's theorem that the group $\pi_1(M_s)$ is isomorphic to the fundamental group of the union of 
the higher side and the round cobordism of $f_s$. 
In particular, we obtain: 
\begin{align*}
\pi_1(M_s)\cong \begin{cases}
\pi_1(\Sigma_2)/<c_5, \beta_1, \beta_{-1}, \gamma_1, \gamma_2> & \text{($s=2$)}, \\
\pi_1(\Sigma_2)/<c_5, \alpha_1, \alpha_2, \ldots, \alpha_{s-2}, \beta_{s-1}, \beta_{-1}, \gamma_1, \gamma_2> & \text{($s\geq 3$)}. 
\end{cases}
\end{align*}
We identify the group $\pi_1(M_s)$ with the above quotient group via the isomorphism. 
The group $\pi_1(\Sigma_2)$ is generated by the elements 
$\lambda_i\in\pi_1(\Sigma_2)$ ($i=1,2,3,4$), where $\lambda_i$ is represented by the loop $\Lambda_i$ 
which is described as shown in Figure \ref{generator}. 

\begin{figure}[htbp]
\begin{center}
\includegraphics[width=50mm]{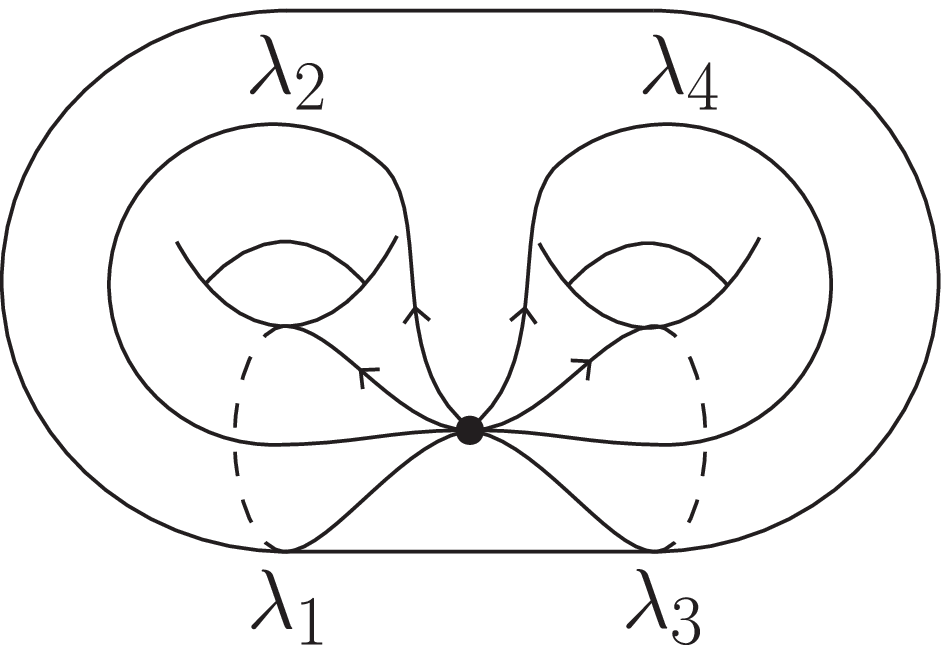}
\end{center}
\caption{}
\label{generator}
\end{figure}

Since $c_5,\beta_{-1}$ and $\gamma_1$ are free homotopic the loops 
$\lambda_3, \lambda_3\cdot{\lambda_4}^{-1}$ and ${\lambda_3}^{-1}\cdot{\lambda_4}^{-1}\cdot\lambda_2$, 
respectively. 
So the elements $\lambda_2, \lambda_3$ and $\lambda_4$ vanish in the group $\pi_1(M_s)$.  
The loops $\alpha_i, \beta_j$ and $\gamma_k$ are free homotopic to loops represented 
by words which consist of only $\lambda_2, \lambda_3 ,\lambda_4$ and their inverse. 
Thus, $\pi_1(M_s)$ is generated by the element $\lambda_1$. 
In particular, we obtain $\pi_1(M_s)\cong\mathbb{Z}$. 
The remaining statement holds since $c_1$ is free homotopic to the loop $\lambda_1$ 

\end{proof}

By Lemma \ref{fundamental}, we obtain the simply connected $4$-manifold $\tilde{M_s}$ 
from $M_s$ by the following construction: 

\begin{enumerate}[Step.1]

\item We first remove from $M_s$ the interior of the regular neighborhood of the regular fiber $F_0$ 
of $f_s$ in the lower side. 

\item The boundary of $M_s\setminus \text{Int}(\nu F_0)$ is the trivial torus bundle over the circle. 
In particular, we obtain, 
\[
\pi_1 (\partial (M_s\setminus \text{Int}(\nu F_0))) \cong \pi_1(S^1)\times \pi_1(T^2)\cong \mathbb{Z}\times\mathbb{Z}^2. 
\]
Moreover, a certain primitive element $\mu$ in the group $\pi_1(T^2)\cong \mathbb{Z}^2\in \pi_1(\partial (M_s\setminus \text{Int}(\nu F_0)))$ is mapped to 
the generator of $\pi_1(M_s\setminus \text{Int}(\nu F_0))\cong \mathbb{Z}$ by the natural homomorphism 
$\pi_1 (\partial (M_s\setminus \text{Int}(\nu F_0)))\rightarrow\pi_1 (M_s\setminus \text{Int}(\nu F_0))$. 
We can attach $\nu F_0$ to $M_s\setminus \text{Int}(\nu F_0)$ so that 
the attaching circle of the $2$-handle of $\nu F_0$ is along the simple closed curve 
which represents $([S^1],\mu)\in\pi_1 (\partial (M_s\setminus \text{Int}(\nu F_0)))$, 
where $[S^1]\in \pi_1(S^1)$ is the generator. 
We denote by $\tilde{M_s}$ the $4$-manifold obtained by the above attachment. 

\end{enumerate}

\noindent
In other word, we obtain the manifold $\tilde{M_s}$ from $M_s$ by logarithmic transformation on $F_0$ 
with multiplicity $1$. 
In particular, the BLF structure on $M_s\setminus \text{Int}(\nu F_0)$ is naturally extended 
to that on $\tilde{M_s}$. 
We denote this SBLF by $\tilde{f}_s: \tilde{M_s}\rightarrow S^2$.  
We also remark that Kirby diagrams of $M_s$ and $\tilde{M_s}$ can be drawn as shown in Figure \ref{kirby1} and \ref{kirby2}, respectively, 
by using the method in \cite{Ba}. 
The $2$-handles corresponding to the vanishing cycles $\alpha_1,\ldots,\alpha_{s-2}$ 
is in the shaded parts in Figure \ref{kirby1} and Figure \ref{kirby2}. 
These parts are empty if $s$ is equal to $2$.  

\par

The following theorem states that $\tilde{f}_s$ gives the explicit example of genus-$2$ 
SBLF structure on $\sharp (s-2)\mathbb{CP}^2$. 

\begin{thm}\label{diffeoMs}

For each $s\geq 2$, $\tilde{M_s}$ is diffeomorphic to the manifold $\sharp (s-2)\mathbb{CP}^2$. 

\end{thm}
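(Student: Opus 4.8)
The plan is to prove the theorem by Kirby calculus, starting from the Kirby diagram of $\tilde{M}_s$ drawn by Baykur's method (Figure \ref{kirby2}) and simplifying it until it becomes the standard diagram of $\sharp(s-2)\mathbb{CP}^2$. By the handle decomposition reviewed in subsection \ref{subsec_handle}, the higher side of $f_s$ contributes one $0$-handle, four $1$-handles and one surface-closing $2$-handle coming from $D^2\times\Sigma_2$, together with one $2$-handle for each vanishing cycle of $W_s$; the round cobordism contributes a round $2$-handle, namely one $2$-handle and one $3$-handle; and the lower side $D^2\times\Sigma_1$ contributes one $2$-handle, two $3$-handles and one $4$-handle. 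The logarithmic transformation of multiplicity $1$ only alters the attaching data of the single lower-side $2$-handle. As a guide, a direct count gives $\chi(\tilde{M}_s)=s$, and after the simplification below one reads off $b_1=0$ and signature $s-2$, so the reduced diagram should consist of exactly $s-2$ mutually split, $(+1)$-framed unknots.

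First I would cancel the four $1$-handles, which correspond to the generators $\lambda_1,\lambda_2,\lambda_3,\lambda_4$ of $\pi_1(\Sigma_2)$ appearing in the proof of Lemma \ref{fundamental}. There it is shown that $\lambda_3$, $\lambda_4$ and $\lambda_2$ are killed respectively by $c_5$ (the core circle of the round $2$-handle), by $\beta_{-1}$ and by $\gamma_1$, while the multiplicity-$1$ logarithmic transformation is designed precisely to kill the remaining generator $\lambda_1$. Reading off Figure \ref{scc21-2}, each of these four $2$-handles runs geometrically exactly once over the corresponding $1$-handle, so the four $1$-handles cancel in pairs against the $c_5$-, $\beta_{-1}$-, $\gamma_1$- and logarithmic-transformation $2$-handles.

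After this cancellation the surviving $2$-handles are the surface-closing handle, the handles along $\beta_{s-1}$ and $\gamma_2$, and the handles along $\alpha_1,\ldots,\alpha_{s-2}$, and there remain three $3$-handles and one $4$-handle. The next step is to cancel the three $3$-handles against three of these surviving $2$-handles, namely the surface-closing handle and the handles along $\beta_{s-1}$ and $\gamma_2$, leaving only a single $0$-handle, a single $4$-handle, and the handles along $\alpha_1,\ldots,\alpha_{s-2}$. Tracing the framings, each $\alpha_i$-handle has framing $-1$ relative to the fiber framing, and after the slides governed by the relation of Lemma \ref{lem-alphabeta} these handles unlink into $s-2$ disjoint unknots whose framings become $+1$. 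Hence $\tilde{M}_s\cong\sharp(s-2)\mathbb{CP}^2$; when $s=2$ the shaded $\alpha$-part of Figure \ref{kirby2} is empty and $\tilde{M}_2\cong S^4$.

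The hard part will be the framing and linking bookkeeping in this last step: one must verify both that the handle slides genuinely untangle the $\alpha_i$ into split unknots and, most delicately, that the resulting framings equal $+1$ rather than $-1$, so that $\mathbb{CP}^2$- rather than $\overline{\mathbb{CP}^2}$-summands arise and the intersection form is positive definite. This sign hinges on the explicit placement of the curves $\alpha_i,\beta_j,\gamma_k$ in Figure \ref{scc21-2} and on the way the multiplicity-$1$ logarithmic transformation reglues the lower side, so the diagrams in Figures \ref{kirby1} and \ref{kirby2} must be manipulated with genuine care; merely matching $\chi$, $b_1$ and the signature does not by itself pin down the smooth type.
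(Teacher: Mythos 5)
Your outline follows the same overall strategy as the paper --- reduce the Kirby diagram of $\tilde{M_s}$ produced by Baykur's method to a standard one --- and your handle count and Euler characteristic check are correct. But what you have written is a plan for a proof rather than a proof: every step that carries actual content is deferred. The paper's argument consists precisely of the explicit chain of handle slides (Figures \ref{kirby3}--\ref{kirby17}): the Lefschetz $2$-handles are slid over the $0$-framed $2$-handle of the round handle, the $\gamma_2$-handle is slid over the $\gamma_1$-handle and the result over the $\beta_{-1}$-handle, canceling pairs are eliminated, and only at the end does one split off $0$-framed unknots that are killed against the three $3$-handles. None of this appears in your proposal; you simply assert that after ``the slides governed by the relation of Lemma \ref{lem-alphabeta}'' the $\alpha_i$-handles become split unknots with framing $+1$. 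That assertion \emph{is} the theorem. In particular the sign of the final framing --- which you correctly single out as the delicate point, since it distinguishes $\mathbb{CP}^2$- from $\overline{\mathbb{CP}^2}$-summands --- is computed nowhere; quoting the signature $s-2$ as a ``guide'' is circular, because the signature is only known once the diffeomorphism type is. (Note that the paper's own final sentence reads ``$s-2$ unknots with $-1$ framing,'' evidently a slip for $+1$; this is exactly the kind of sign that cannot be waved at.)

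Two further steps in your plan are not justified as stated. First, you cancel the four $1$-handles against the $c_5$-, $\beta_{-1}$-, $\gamma_1$- and logarithmic-transformation $2$-handles on the grounds that these curves represent suitable free homotopy classes (from the proof of Lemma \ref{fundamental}). Free homotopy only gives algebraic intersection with the belt spheres; cancellation requires geometric intersection number one, which must be read off from the actual curves of Figure \ref{scc21-2} (for instance $\gamma_1\simeq\lambda_3^{-1}\lambda_4^{-1}\lambda_2$ runs over three different $1$-handles, so the order of cancellation matters), and each cancellation drags the remaining $2$-handles along, changing their framings and linking in ways you would then have to track. Second, you propose to cancel the three $3$-handles directly against three named $2$-handles; the standard route, and the paper's, is instead to simplify until $0$-framed unknots split off and then invoke the fact that the $3$- and $4$-handles of a closed $4$-manifold attach in an essentially unique way. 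Your proposal would become a proof only once the diagrammatic computation you postpone to ``the hard part'' is actually carried out.
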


\begin{proof}

We prove this theorem by Kirby calculus. 
A Kirby diagram of $\tilde{M_s}$ is shown in Figure \ref{kirby2}, 
where the framing of the $2$-handles drawn in the bold curves is $0$, 
the framing of the $2$-handle of $\nu F_0$ is described by the broken curve 
and the framings of the other $2$-handles are all $-1$.   
We obtain Figure \ref{kirby3} by sliding several $2$-handles to the $2$-handle of the round $2$-handle and 
isotopy moves give Figure \ref{kirby4}. 
We next slide the $2$-handles corresponding to the vanishing cycles $\alpha_1,\ldots,\alpha_{s-2}$ 
to the $2$-handle of the round $2$-handle. 
We can eliminate the obvious canceling pair and we obtain Figure \ref{kirby5}. 
Sliding the $2$-handle corresponding to $\gamma_2$ to that corresponding to $\gamma_1$ 
gives Figure \ref{kirby6}. 
We get Figure \ref{kirby7} by isotopy moves.  
By sliding the $2$-handle corresponding to $\gamma_1$ to that corresponding to $\beta_{-1}$, 
we obtain Figure \ref{kirby8}. 
Isotopy moves give Figure \ref{kirby9} and Figure \ref{kirby10} is obtained by 
sliding the $2$-handle corresponding to $\beta_{-1}$ to the $0$-framed meridian 
and isotopy moves. 
The diagram described in Figure \ref{kirby10} can be divided into two components. 
We look at the left component in Figure \ref{kirby10}. 
By sliding the outer $2$-handle to the $2$-handle of $\nu F_0$, 
we can change the left component in Figure \ref{kirby10} 
into the diagram as described in Figure \ref{kirby11}. 
Isotopy moves give Figure \ref{kirby12} and Figure \ref{kirby13} 
and we obtain the diagram in the left side of Figure \ref{kirby14} by canceling the obvious canceling pair.  
To obtain the diagram in the right side of Figure \ref{kirby14}, 
we slide the $-1$-framed $2$-handle to the $1$-framed $2$-handle 
and then eliminate the canceling pair. 
Since the handle decomposition of $\tilde{M_s}$ has three $3$-handles, 
the $0$-framed unknots in Figure \ref{kirby14} can be eliminated. 
Eventually, we can change the diagram in Figure \ref{kirby10} 
into the diagram in Figure \ref{kirby15}. 
The diagram in Figure \ref{kirby16} is same as in Figure \ref{kirby15}, 
but the $2$-handles in the shaded part are described in Figure \ref{kirby16}. 
By isotopy moves, we get Figure \ref{kirby17}. 
This diagram is similar to Figure 24 in \cite{H}. 
By using similar technique to \cite{H}, 
we can change the diagram in Figure \ref{kirby17} 
into $s-2$ unknots with $-1$ framing. 
This completes the proof of Theorem \ref{diffeoMs}. 

\end{proof}

As a result of Theorem \ref{diffeoMs}, Theorem \ref{main2} holds.


\vspace{1em}

\noindent
{\bf Acknowledgments. }
The authors would like to express their gratitude to Hisaaki Endo for his continuous support during the course of this work, and Nariya Kawazumi for his helpful comments for the draft of this paper. 
The first author is supported by Yoshida Scholarship 'Master21' and he is grateful to Yoshida Scholarship Foundation for their support. The second author is supported by JSPS Research Fellowships for Young Scientists (22-2364).

\newpage

\begin{figure}[htbp]
\begin{center}
\includegraphics[width=141mm]{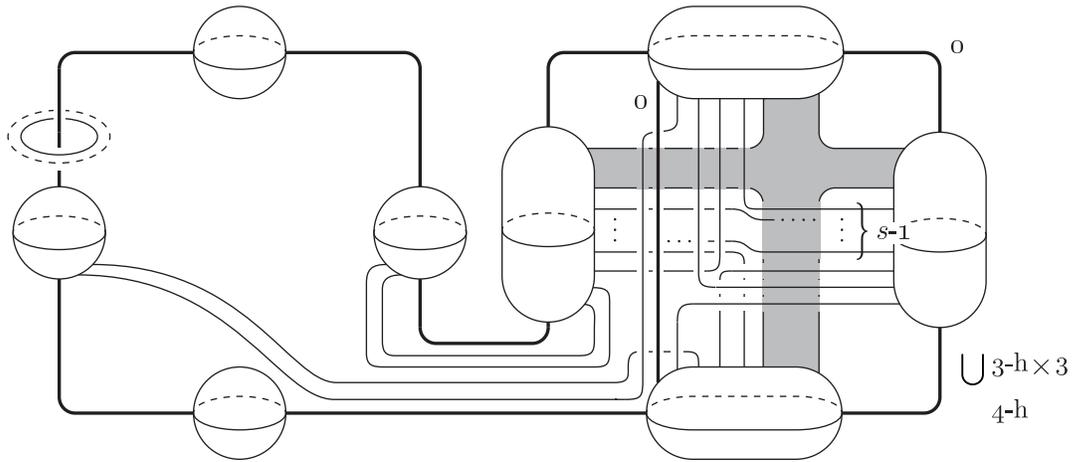}
\end{center}
\caption{a Kirby diagram of $M_s$}
\label{kirby1}
\end{figure}

\begin{figure}[htbp]
\begin{center}
\includegraphics[width=141mm]{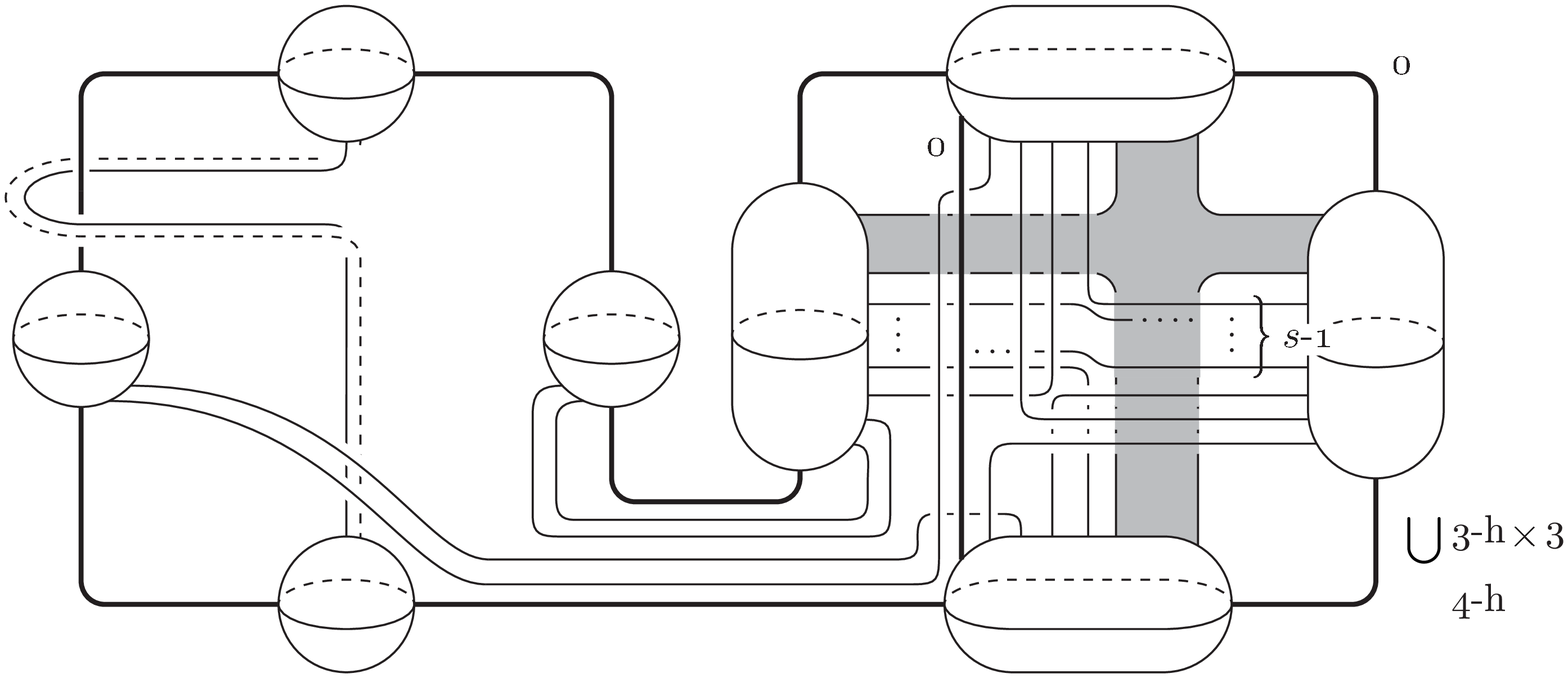}
\end{center}
\caption{a Kirby diagram of $\tilde{M_s}$}
\label{kirby2}
\end{figure}

\begin{figure}[htbp]
\begin{center}
\includegraphics[width=110mm]{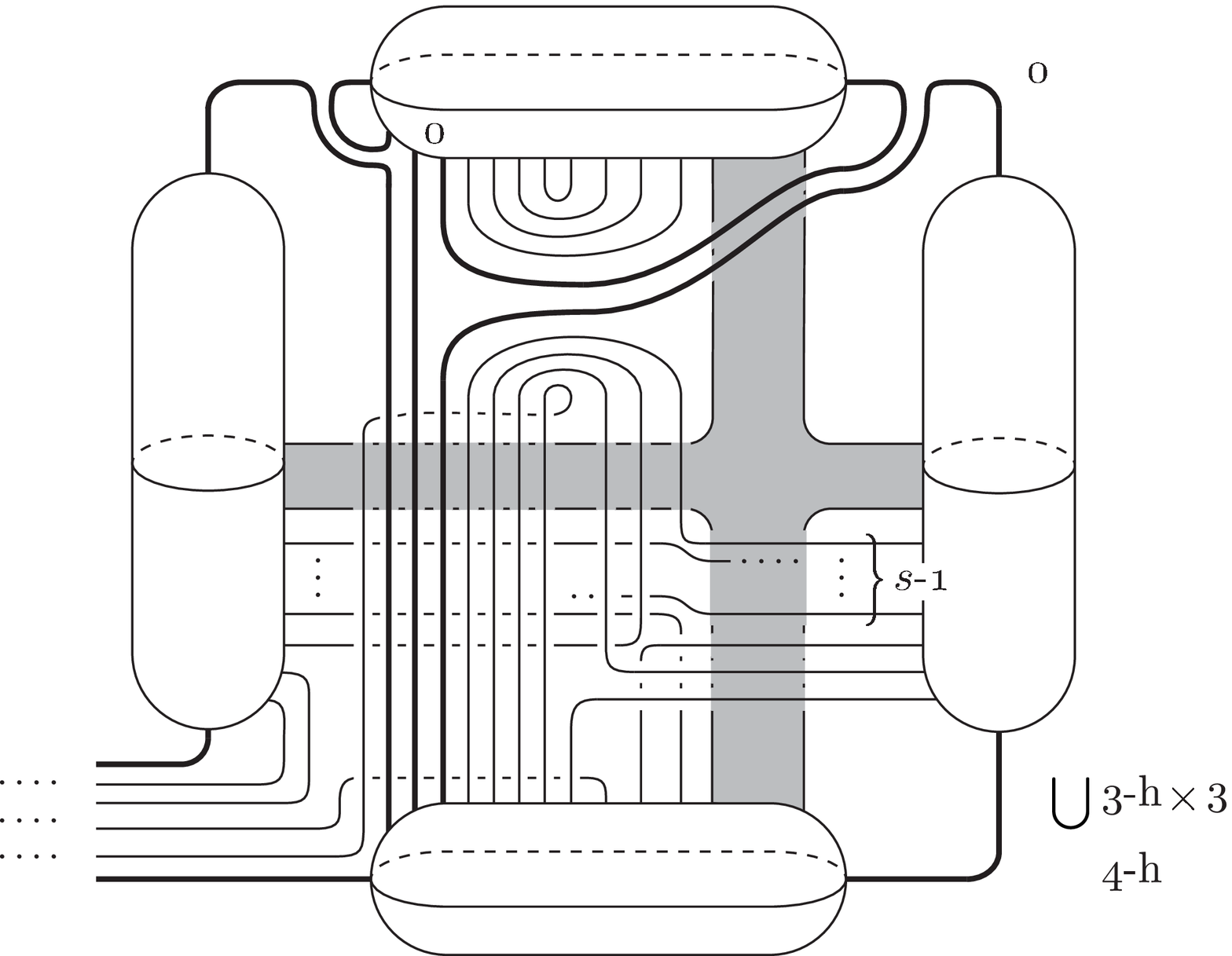}
\end{center}
\caption{}
\label{kirby3}
\end{figure}

\begin{figure}[htbp]
\begin{center}
\includegraphics[width=141mm]{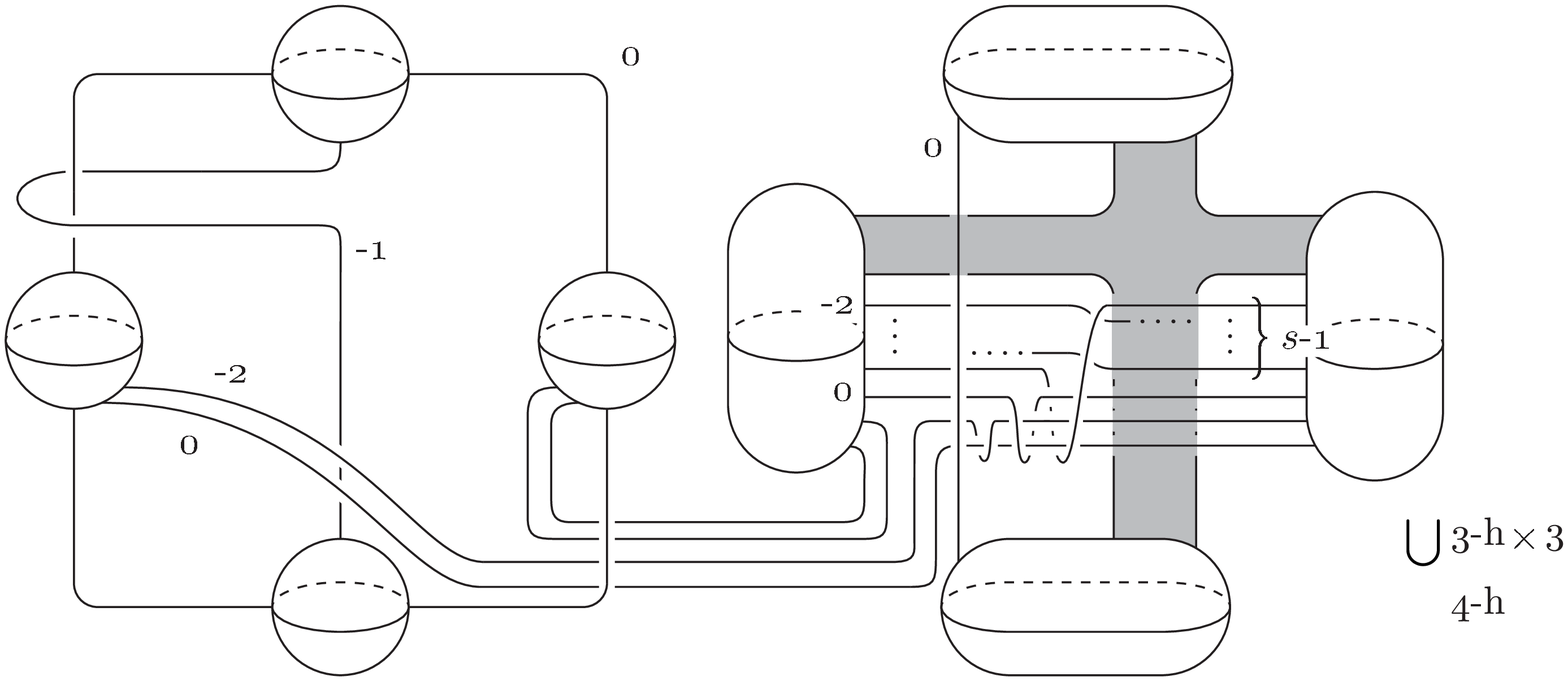}
\end{center}
\caption{}
\label{kirby4}
\end{figure}

\begin{figure}[htbp]
\begin{center}
\includegraphics[width=141mm]{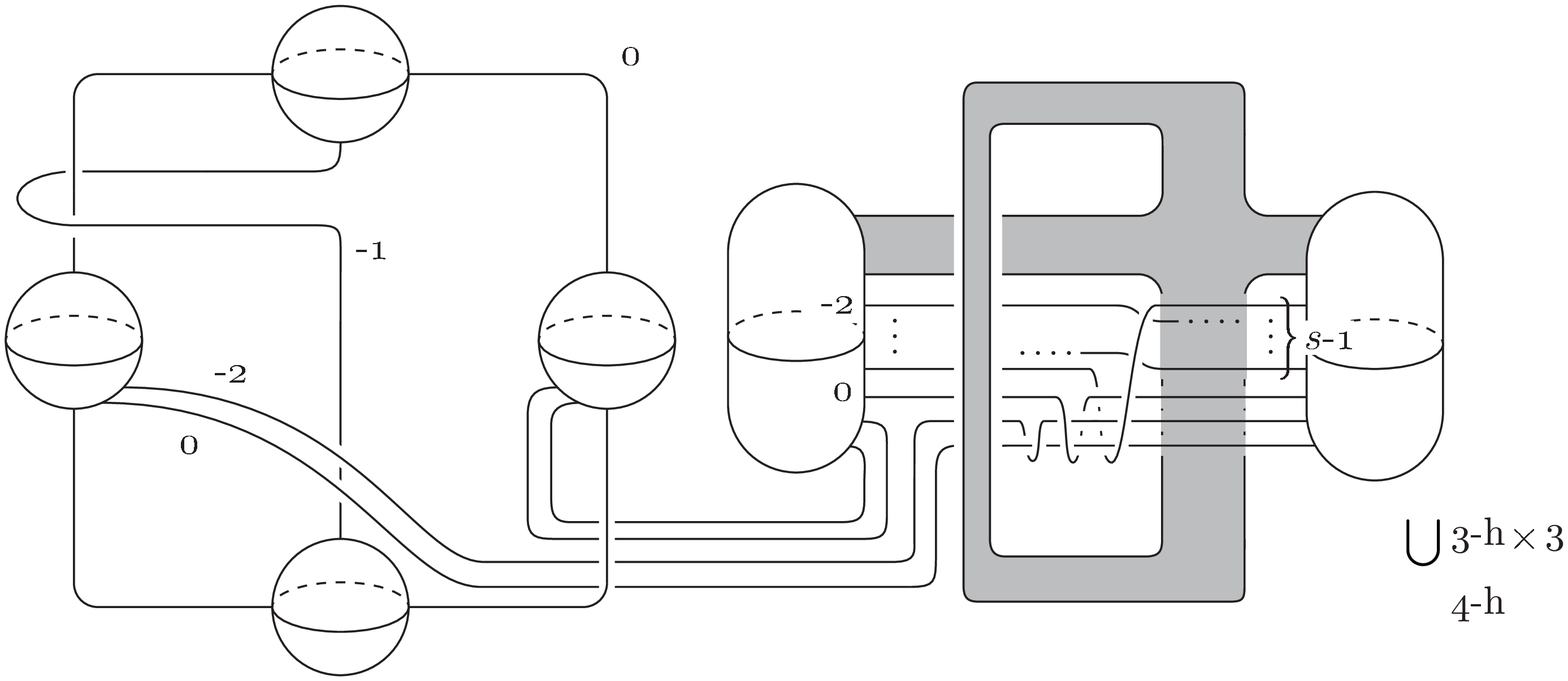}
\end{center}
\caption{}
\label{kirby5}
\end{figure}

\begin{figure}[htbp]
\begin{center}
\includegraphics[width=141mm]{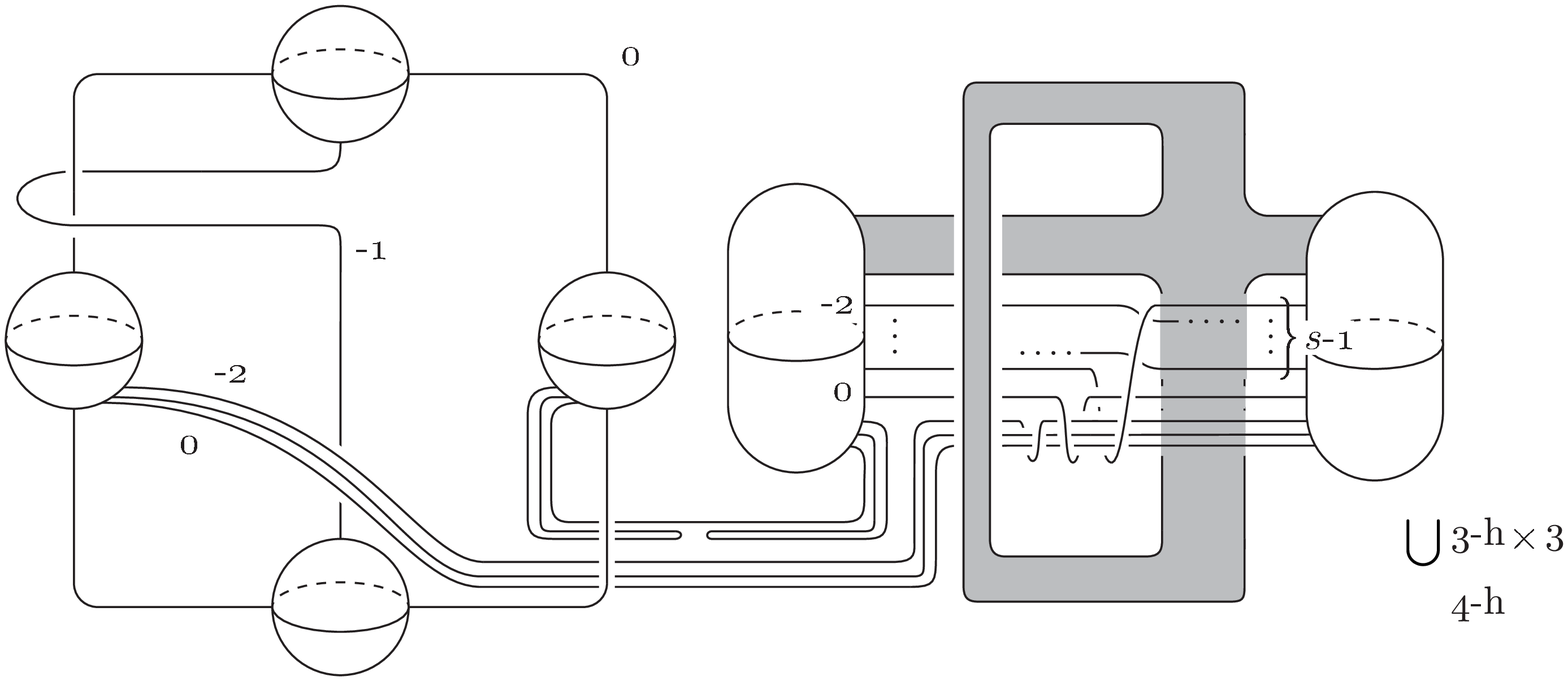}
\end{center}
\caption{}
\label{kirby6}
\end{figure}

\begin{figure}[htbp]
\begin{center}
\includegraphics[width=141mm]{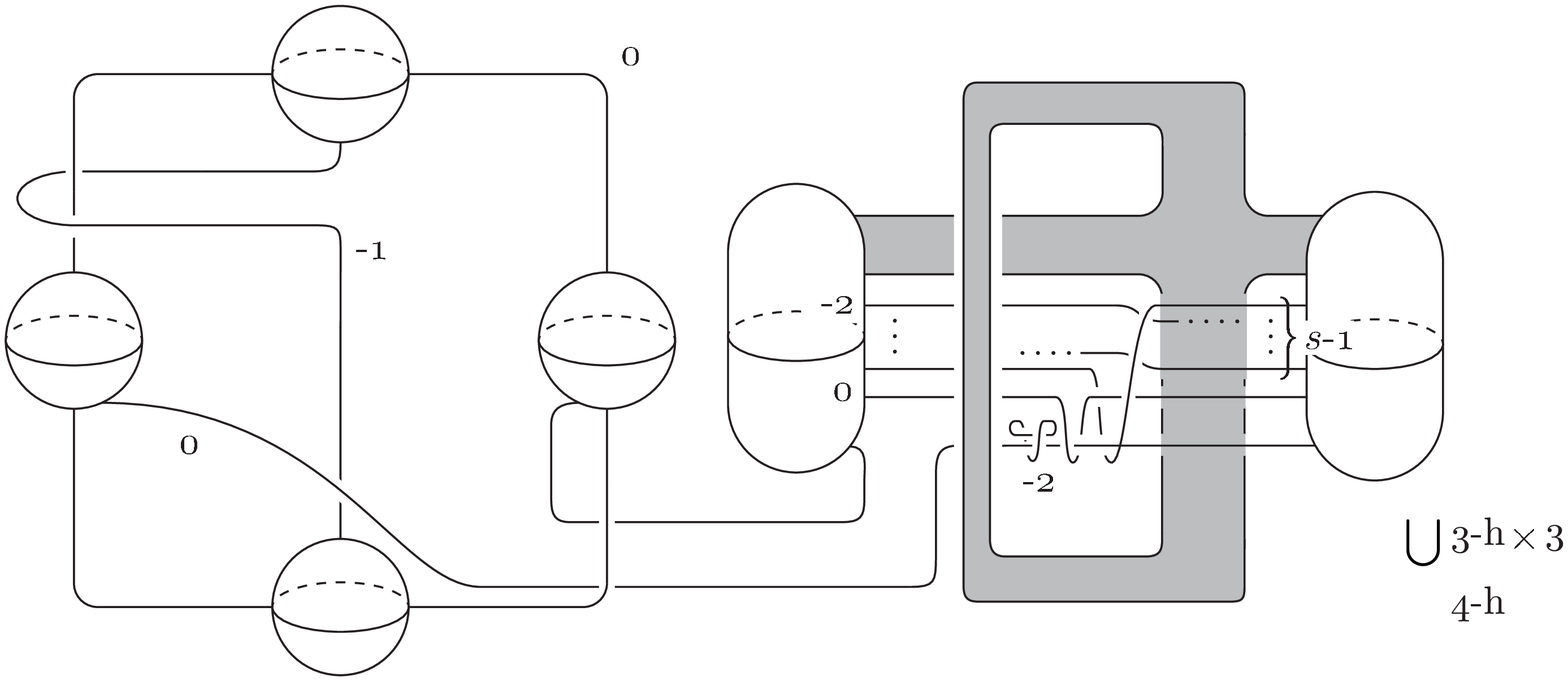}
\end{center}
\caption{}
\label{kirby7}
\end{figure}

\begin{figure}[htbp]
\begin{center}
\includegraphics[width=141mm]{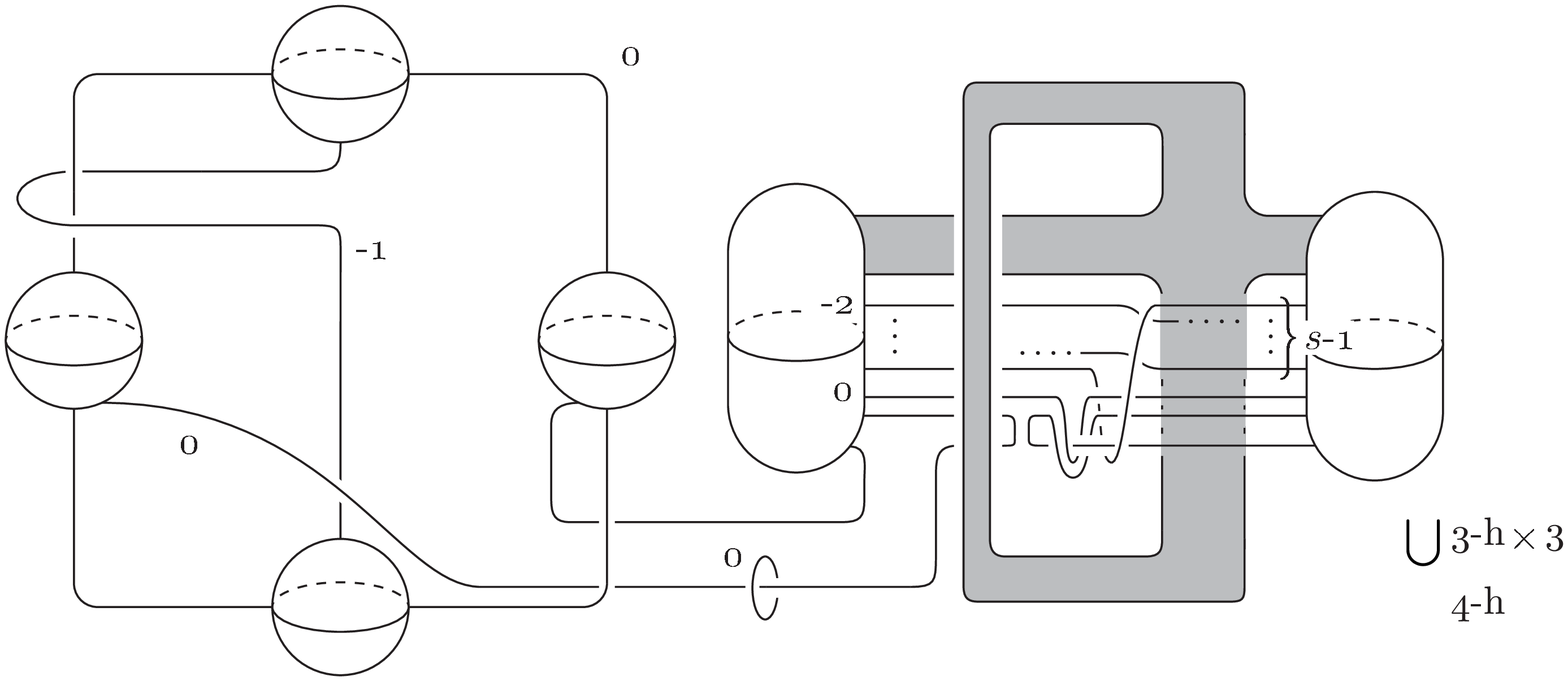}
\end{center}
\caption{}
\label{kirby8}
\end{figure}

\begin{figure}[htbp]
\begin{center}
\includegraphics[width=141mm]{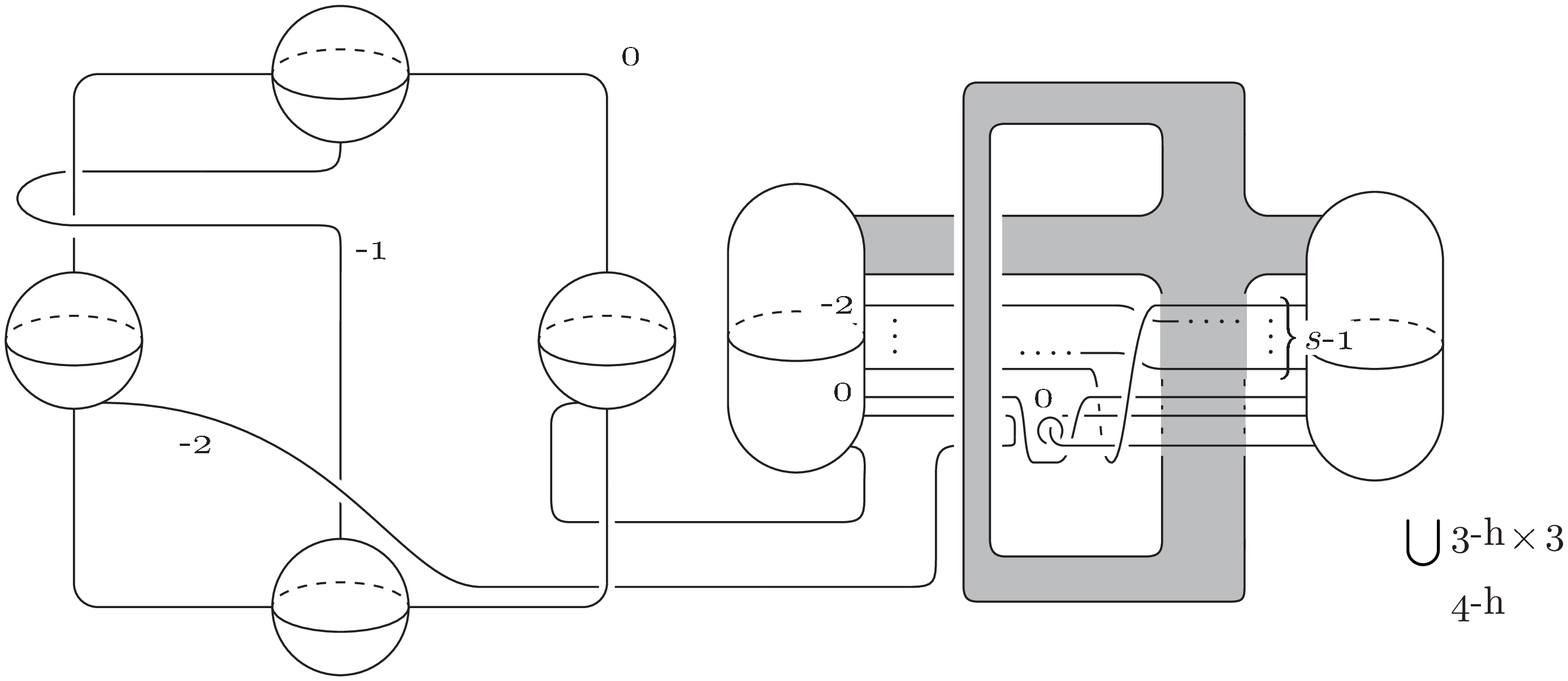}
\end{center}
\caption{}
\label{kirby9}
\end{figure}

\begin{figure}[htbp]
\begin{center}
\includegraphics[width=141mm]{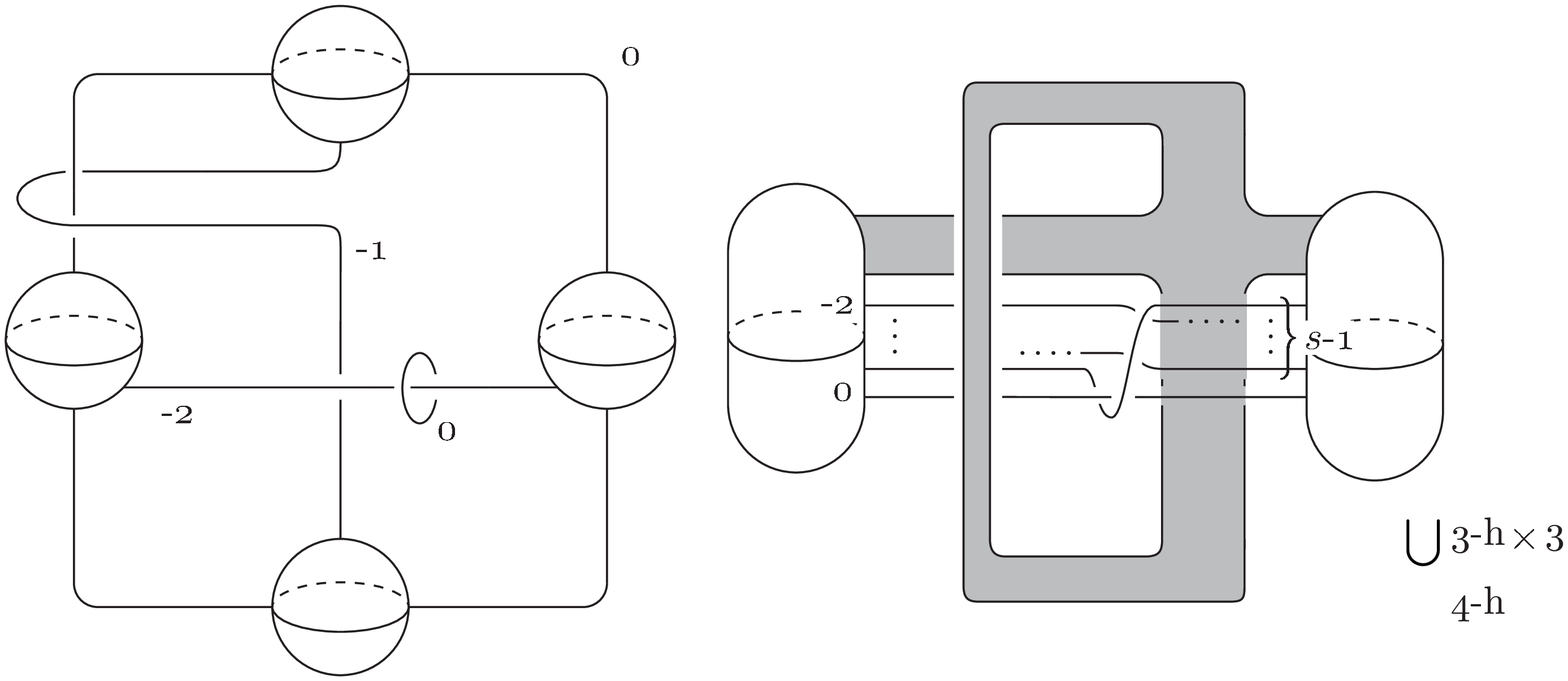}
\end{center}
\caption{}
\label{kirby10}
\end{figure}

\begin{figure}[htbp]
\begin{minipage}{70mm}
\begin{center}
\includegraphics[width=60mm]{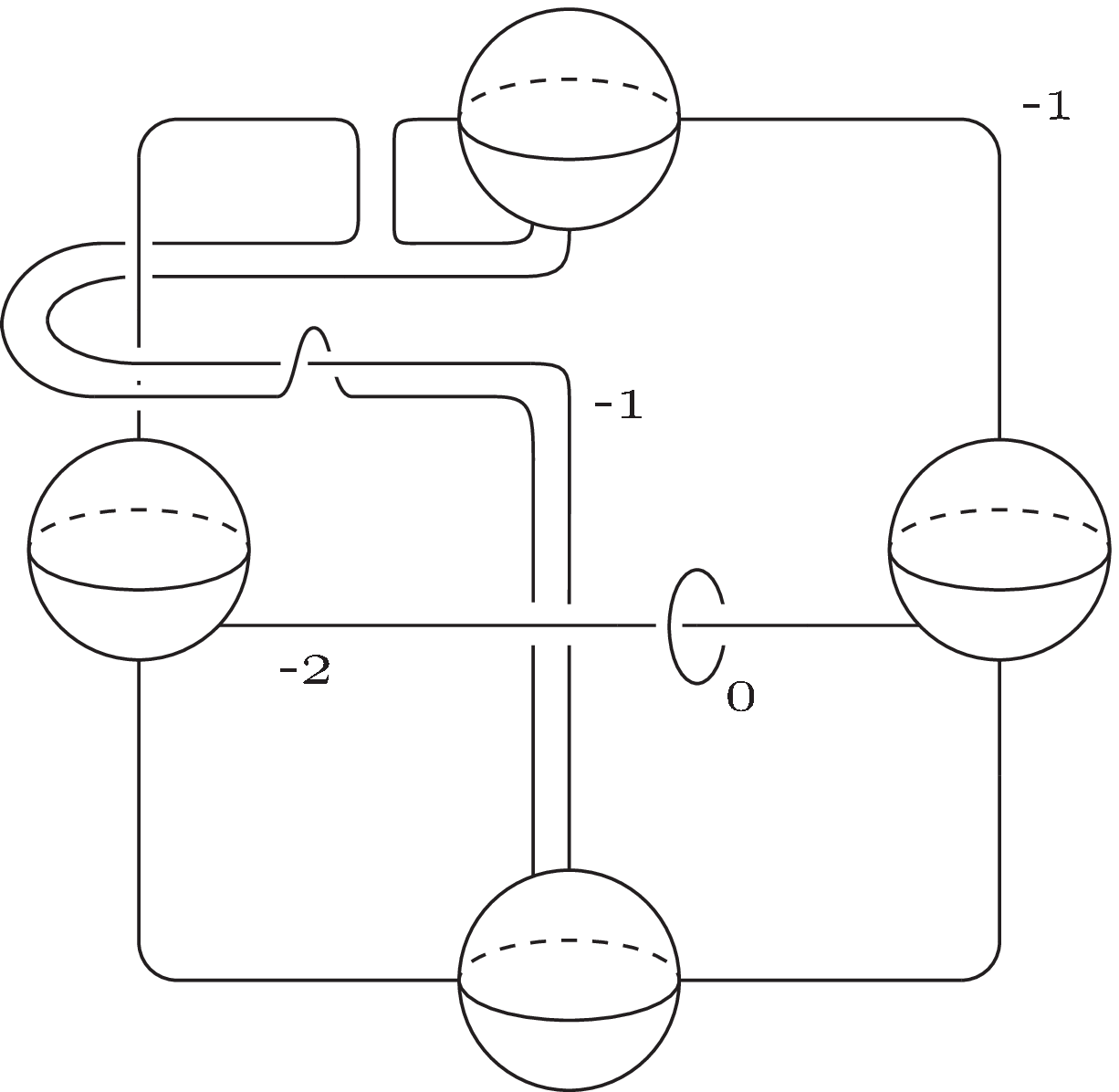}
\end{center}
\caption{}
\label{kirby11}
\end{minipage}
\hspace{.5em}
\begin{minipage}{70mm}
\begin{center}
\includegraphics[width=60mm]{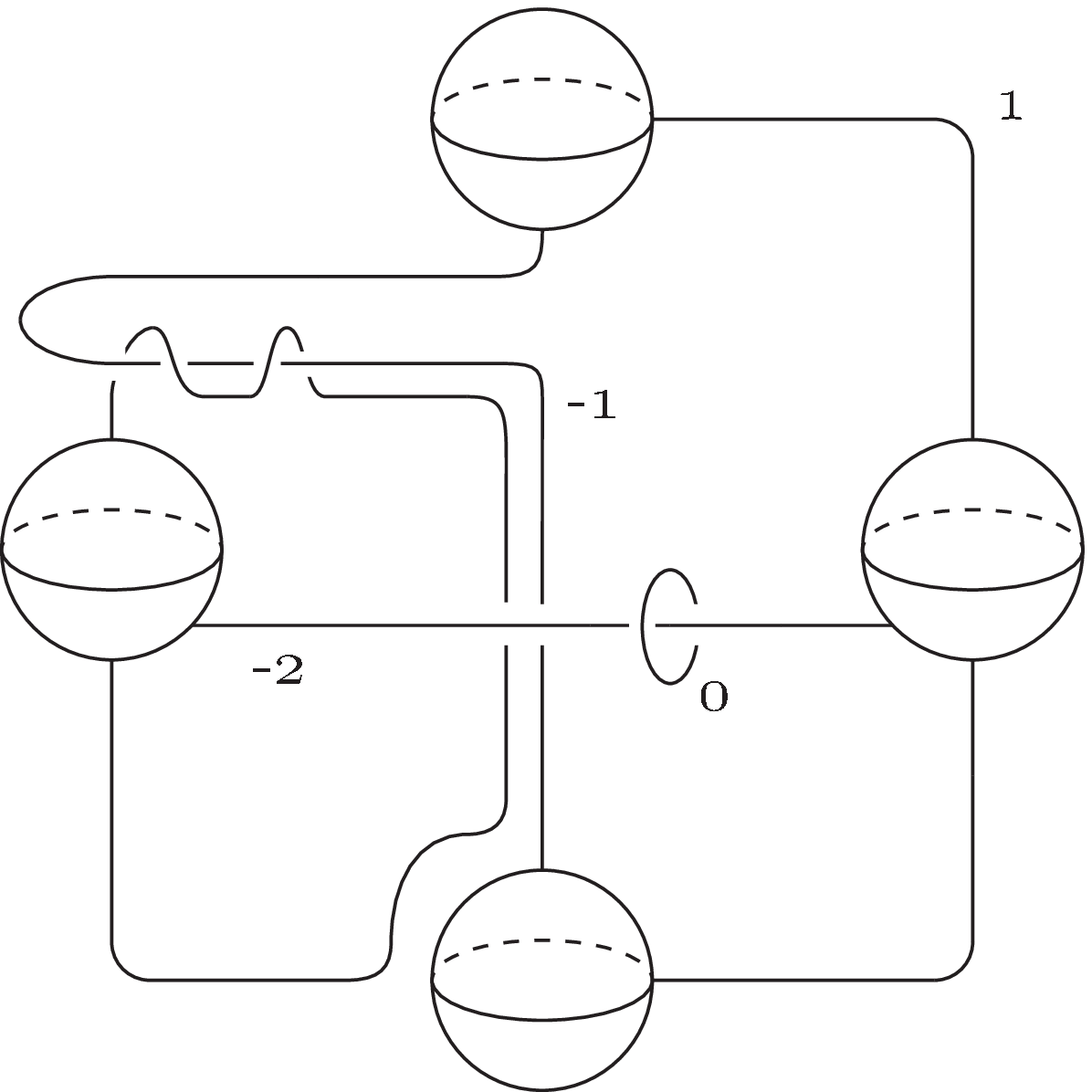}
\end{center}
\caption{}
\label{kirby12}
\end{minipage}
\end{figure}

\begin{figure}[htbp]
\begin{minipage}{70mm}
\begin{center}
\includegraphics[width=60mm]{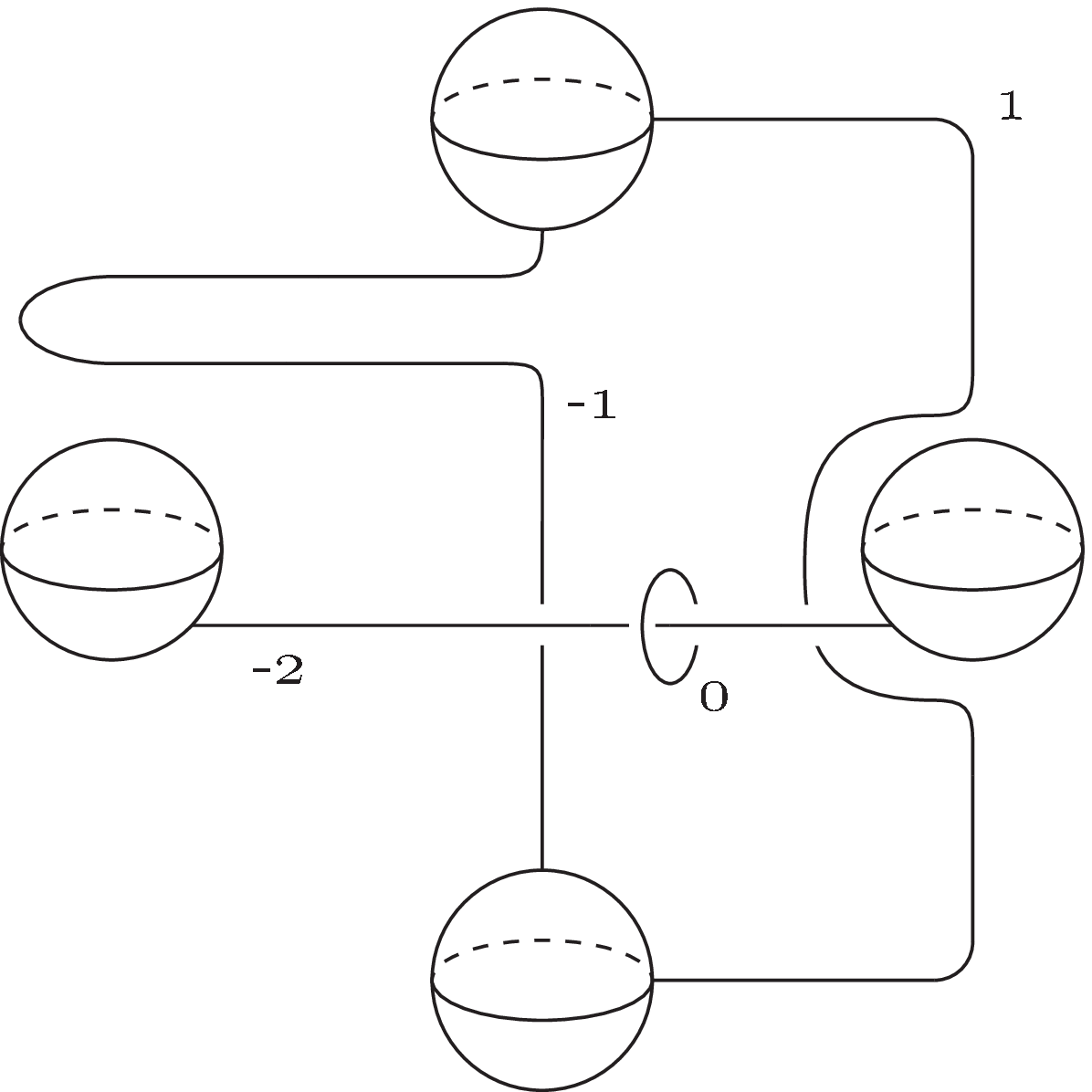}
\end{center}
\caption{}
\label{kirby13}
\end{minipage}
\hspace{1em}
\begin{minipage}{75mm}
\begin{center}
\includegraphics[width=85mm]{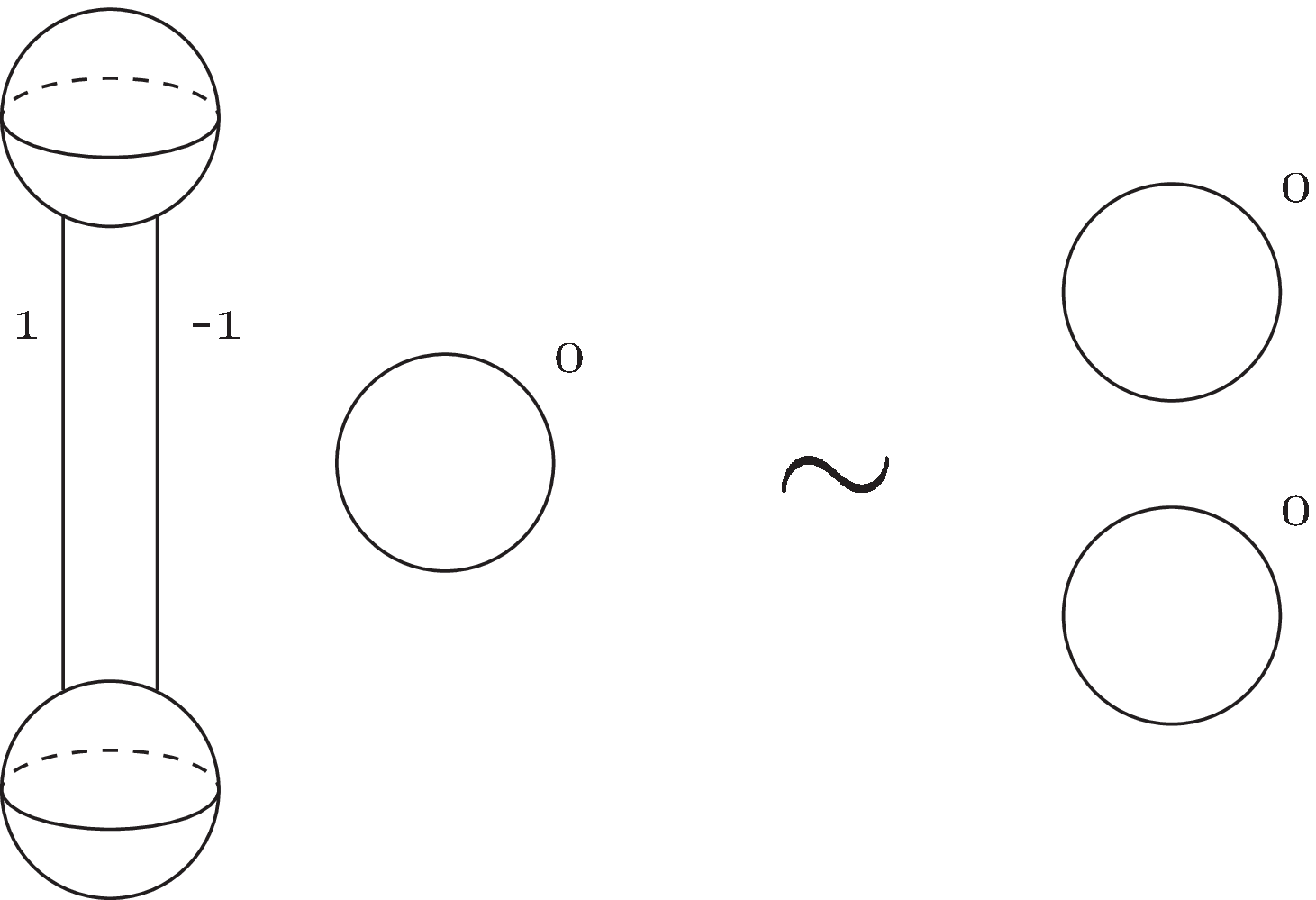}
\end{center}
\caption{}
\label{kirby14}
\end{minipage}
\end{figure}

\begin{figure}[htbp]
\begin{center}
\includegraphics[width=75mm]{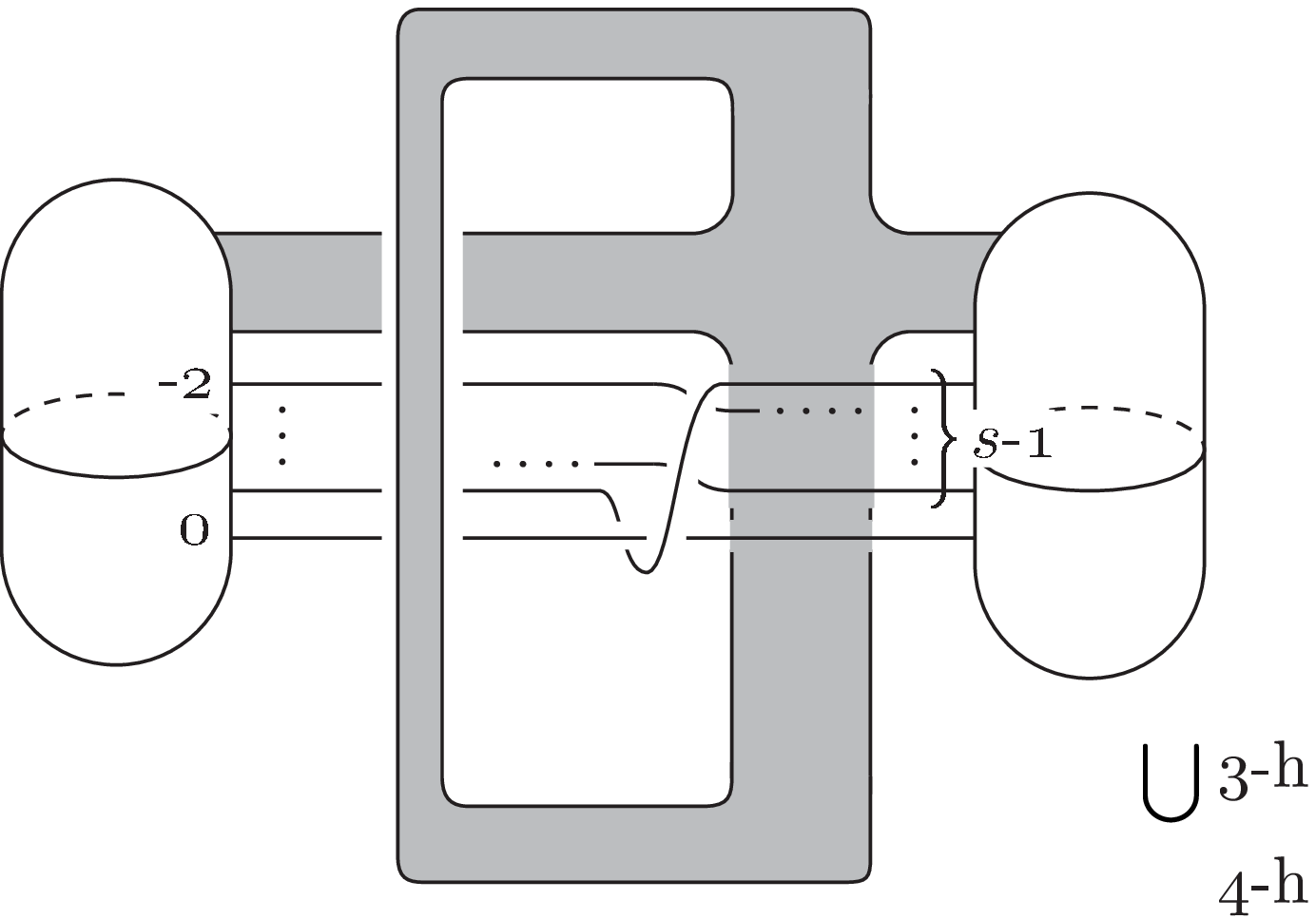}
\end{center}
\caption{}
\label{kirby15}
\end{figure}

\begin{figure}[htbp]
\begin{center}
\includegraphics[width=145mm]{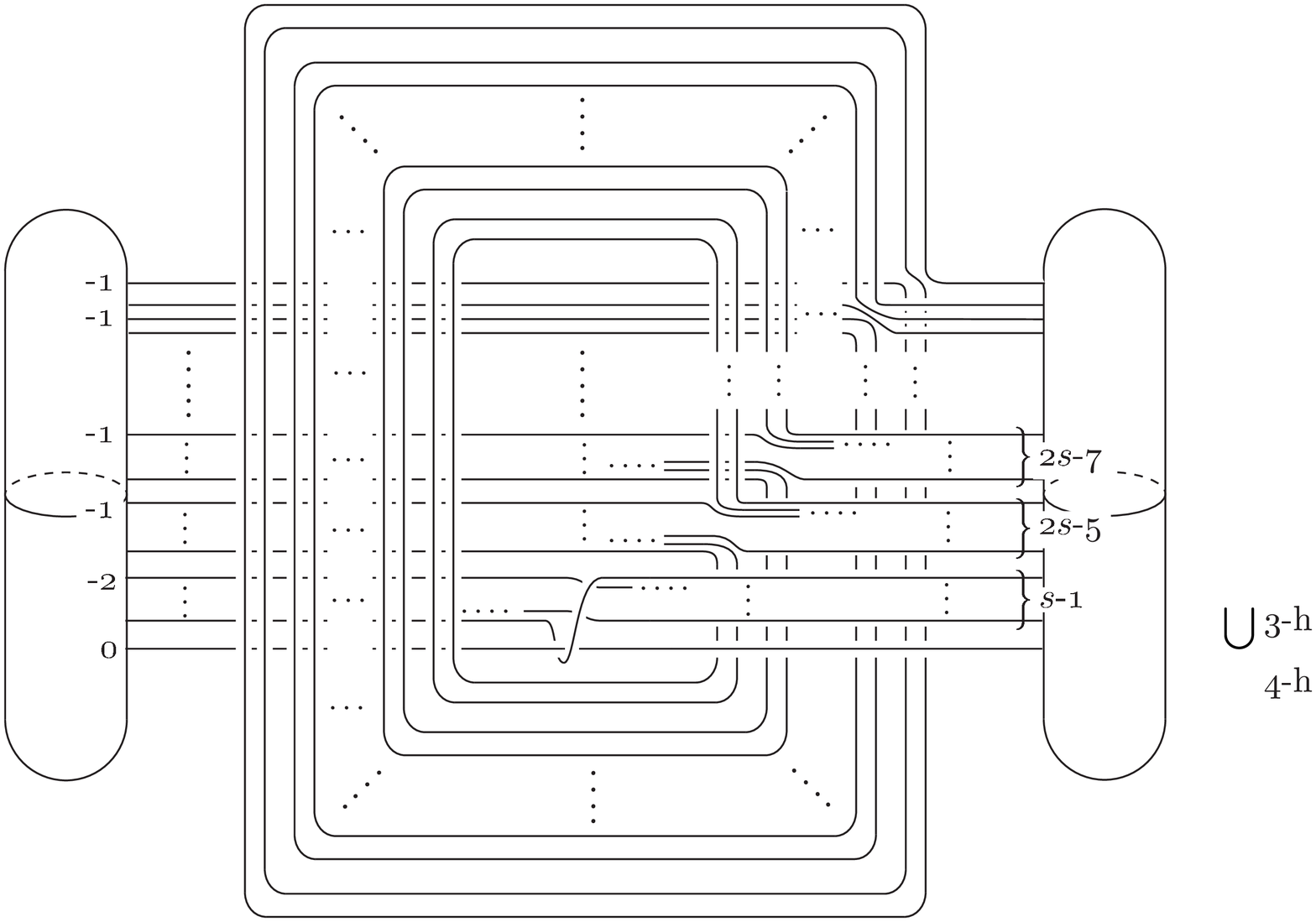}
\end{center}
\caption{}
\label{kirby16}
\end{figure}

\begin{figure}[htbp]
\begin{center}
\includegraphics[width=145mm]{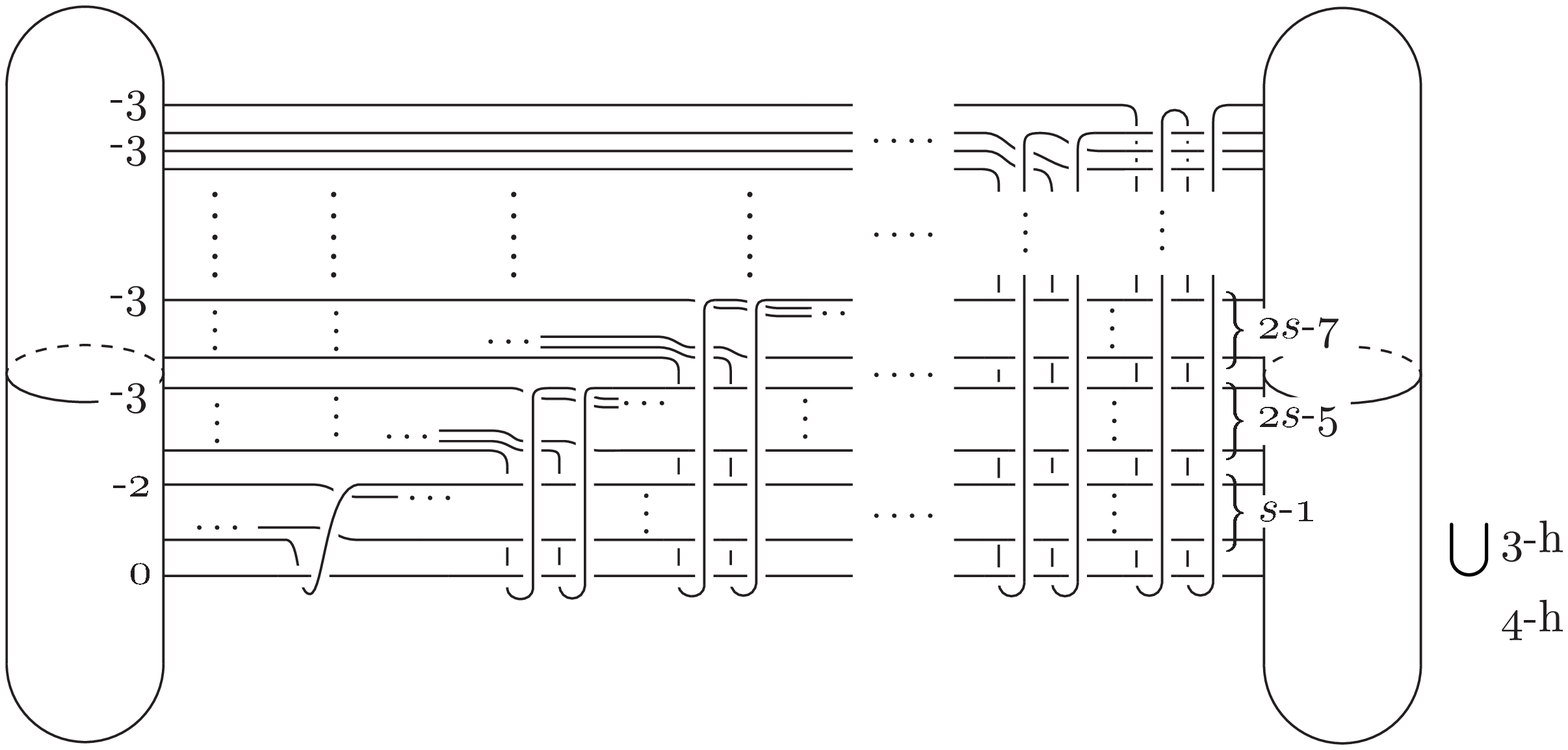}
\end{center}
\caption{}
\label{kirby17}
\end{figure}

\end{document}